\numberwithin{equation}{section}
\newcommand{\N}{\mathbb{N}}
\newcommand{\R}{\mathbb{R}}
\newcommand{\E}{\mathbb{E}}
\newcommand{\Prob}{\mathbb{P}}
\newcommand{\G}{\mathbb{G}}
\newcommand{\pnorm}[2]{\lVert#1\rVert_{#2}}
\newcommand{\biggpnorm}[2]{\bigg\lVert#1\bigg\rVert_{#2}}
\newcommand{\abs}[1]{\lvert#1\rvert}
\newcommand{\bigabs}[1]{\big\lvert#1\big\rvert}
\newcommand{\biggabs}[1]{\bigg\lvert#1\bigg\rvert}
\renewcommand{\epsilon}{\varepsilon}
\renewcommand{\d}[1]{\mathrm{d}#1}
\newcommand{\beq}{\begin{equation}}
\newcommand{\eeq}{\end{equation}}
\newcommand{\beqa}{\begin{equation} \begin{aligned}}
\newcommand{\eeqa}{\end{aligned} \end{equation}}
\newcommand{\beqas}{\begin{equation*} \begin{aligned}}
\newcommand{\eeqas}{\end{aligned} \end{equation*}}
\newcommand{\bit}{\begin{itemize}}
	\newcommand{\eit}{\end{itemize}}
\newcommand{\bmat}{\begin{bmatrix}}
	\newcommand{\emat}{\end{bmatrix}}
	\def\MR#1{}
\theoremstyle{definition}\newtheorem{problem}{Problem}[section]
\theoremstyle{definition}\newtheorem{definition}[problem]{Definition}
\theoremstyle{remark}\newtheorem{assumption}{Assumption}
\theoremstyle{remark}\newtheorem{remark}[problem]{Remark}
\theoremstyle{definition}\newtheorem{example}[problem]{Example}
\theoremstyle{plain}\newtheorem{theorem}[problem]{Theorem}
\theoremstyle{plain}\newtheorem{question}{Question}
\theoremstyle{plain}\newtheorem{lemma}[problem]{Lemma}
\theoremstyle{plain}\newtheorem{proposition}[problem]{Proposition}
\theoremstyle{plain}\newtheorem{corollary}[problem]{Corollary}
\theoremstyle{plain}
\begin{document}

\title[Uniform limit theorems in complex sampling designs]{Complex Sampling Designs: Uniform Limit Theorems and Applications}
\thanks{The research of J. A. Wellner is partially supported by NSF Grant DMS-1566514, NI-AID grant 2R01 AI291968-04, a Simons Fellowship via the Newton Institute (INI-program STS 2018), Cambridge University,and the Saw Swee Hock Visiting Professorship of Statistics at the National University of Singapore (in 2019). }

\author[Q. Han]{Qiyang Han}

\address[Q. Han]{
Department of Statistics, Rutgers University, Piscataway, NJ 08854, USA.
}
\email{qh85@stat.rutgers.edu}

\author[J. A. Wellner]{Jon A. Wellner}

\address[J. A. Wellner]{
Department of Statistics, Box 354322, University of Washington, Seattle, WA 98195-4322, USA.
}
\email{jaw@stat.washington.edu}

\date{\today}

\keywords{complex sampling design, empirical process, uniform limit theorems}
\subjclass[2000]{60F17, 62E17}
\maketitle

\begin{abstract}
In this paper, we develop a general approach to proving global and local uniform limit theorems for the Horvitz-Thompson empirical process arising from complex sampling designs. Global theorems such as Glivenko-Cantelli and Donsker theorems, and local theorems such as local asymptotic modulus and related ratio-type limit theorems are proved for both the Horvitz-Thompson empirical process, and its calibrated version. Limit theorems of other variants and their conditional versions are also established. Our approach reveals an interesting feature: the problem of deriving uniform limit theorems for the Horvitz-Thompson empirical process is essentially no harder than the problem of establishing the corresponding finite-dimensional limit theorems. These global and local uniform limit theorems are then applied to important statistical problems including (i) $M$-estimation (ii) $Z$-estimation (iii) frequentist theory of Bayes procedures, all with weighted likelihood, to illustrate their wide applicability.
\end{abstract}


\section{Introduction}

\subsection{Overview}

Over the past thirty years, uniform limit theorems for the empirical process have proved to be a universal tool in various statistical problems based on independent observations; we only refer readers to the textbooks  \cite{gine2015mathematical,kosorok2008intro,van2000empirical,van1996weak} for relevant theoretical developments and various statistical applications. 

Our focus here will be uniform limit theorems for the Horvitz-Thompson empirical process arising from complex sampling designs (cf. \cite{sarndal1992model}). Such limit theorems provide fundamental probabilistic tools in statistical applications with survey data, for instance, in combination with the functional delta method (see e.g. \cite{barrett2009statistical,bhattacharya2007inference,bhattacharya2011nonparametric,davidson2009reliable} for applications in econometrics), or in semi-parametric modeling (see e.g. \cite{breslow2003large,breslow2009improved,breslow2009using,lin2000fitting,nan2009asymptotic,nan2013general} for applications in biostatistics), just to name a few. Recent years have seen the emergence of interest in further limit theory in this direction (e.g. \cite{bertail2017empirical,boistard2017functional,breslow2007weighted,breslow2008theorem,conti2014estimation,saegusa2018large,saegusa2013weighted}), but the scope of the existing results in this direction has been somewhat limited, and many of these available results have been derived based on case-by-case analyses. Roughly speaking, there are three approaches so far in the literature:
\begin{enumerate}
	\item  \cite{breslow2007weighted,breslow2008theorem} developed theory in the context of two-phase sampling with phase II a simple sampling without replacement sampling design. The key idea therein is to view the Horvitz-Thompson empirical process conditionally as an exchangeably weighted bootstrap empirical process \cite{praestgaard1993exchangeably}. This idea is further exploited in \cite{saegusa2013weighted} in the context of calibrated Horvitz-Thompson empirical processes. A similar bootstrap approach is adopted in \cite{saegusa2018large} in the setting of stratified sampling with potential overlaps. 
	\item \cite{bertail2017empirical} derived a Donsker theorem for the Bernoulli sampling design and other sampling designs that are close enough to the rejective sampling design (= high entropy designs) under a uniform entropy condition on the indexing function class. Their techniques heavily rely on the conditional independence of the inclusion indicators. 
	\item 
\cite{conti2014estimation} and \cite{boistard2017functional} established Donsker theorems over one class $\{\bm{1}_{(-\infty,t]}:t \in \R\}$ under sampling designs with increasing level of generality, by explicit calculations that verify the one-dimensional tightness condition.
\end{enumerate}

The apparent case-by-case complication here is that complex sampling designs typically induce complicated dependence structure between the samples, so in order to use existing techniques from empirical process theory, certain latent independence or exchangeability structure needs to be identified in a case-by-case routine. 

On the other hand, some structural commonality is indeed hinted at by the results proved in the above cited papers: uniform laws of large numbers (i.e. Glivenko-Cantelli theorems) and uniform central limit theorems (i.e. Donsker theorems) hold under rather minimal conditions on the indexing function classes. The intriguing question naturally arises:

\begin{question}\label{question}
Does there exist any general approach to proving uniform limit theorems for the Horvitz-Thompson empirical process under natural conditions, without being confined to a particular form of the sampling design?
\end{question} 

A possible solution to this very natural question, however, appears far from obvious from the previously 
described approaches. The challenges involved here were already noted in Lin \cite{lin2000fitting} as ``......\emph{To our knowledge there does not exist a general theory on conditions required for the tightness and weak convergence of Horvitz-Thompson processes}......'', dating back to as early as 2000. One of the goals of this paper is to address Question \ref{question} in an appropriate general framework that includes a wide variety of sampling designs. Part of the philosophical difficulty in such a general approach is that there is an easily believable impression that any general attempt at establishing global uniform limit theorems for the Horvitz-Thompson empirical process, must necessarily give general recipes for establishing finite-dimensional convergence of the Horvitz-Thompson empirical process. In the specific context of Donsker theorems, this impression pushes one to think about the `right conditions' under which at least central limit theorems hold for a single function under various different sampling designs---a task that usually already requires a case-by-case study.

In this paper, we show that this easily believable impression need not be the rule in the context of uniform limit theorems for Horvitz-Thompson empirical processes, at least in the super-population framework adopted in \cite{boistard2017functional,rubin2005two} with uniformly positive first-order inclusion probabilities.
The major `change of thinking' adopted in the current paper, interestingly, indicates that \emph{the problem of deriving uniform limit theorems for Horvitz-Thompson empirical processes is not really more difficult than that of establishing the corresponding finite-dimensional limit theorems}. In the context of Donsker theorems, this amounts to saying that, as long as the Horvitz-Thompson empirical process converges finite-dimensionally, weak convergence at the process level follows almost automatically. Since finite-dimensional convergence is necessary for weak convergence of the process to hold, the real point here is to separate the problem of establishing finite-dimensional convergence of the Horvitz-Thompson empirical process from that of establishing a uniform limit theorem. The approach here is in part inspired by a multiplier inequality developed in a recent work of the authors \cite{han2017sharp}, which holds regardless of the dependence structure among the multipliers, given sufficient independence structure between the multipliers and the samples. 

Establishing global uniform limit theorems serves as a first step in understanding the behavior of these Horvitz-Thompson empirical processes. In typical semi-/non-parametric applications, it is also of crucial importance to understand the local behavior of these empirical processes. To this end, we further study the local behavior of the Horvitz-Thompson empirical process by characterizing its local asymptotic modulus and proving several ratio-type limit theorems. These local uniform limit theorems show that the  Horvitz-Thompson empirical process typically has similar local behavior compared to its empirical process counterpart. Similar global and local uniform limit theorems are established for the calibrated version of the Horvitz-Thompson empirical processes. Some other variants of Horvitz-Thompson empirical processes are discussed. Conditional versions of the uniform limit theorems are also established.

As an illustration  and a proof of concept of the power of our global and local uniform limit theorems (and related techniques), we apply these new tools to a variety of important statistical problems, including (i) $M$-estimation, or \emph{empirical risk minimization}, in a general non-parametric model, (ii) $Z$-estimation in a general semi-parametric model, and (iii) frequentist theory of Bayesian procedures (i.e. theory of posterior contraction rates and Bernstein-von Mises type theorems), all based on weighted likelihood. Several concrete examples are illustrated to further demonstrate the applicability of these general results.

The rest of the paper is organized as follows. Section \ref{section:sampling_designs} is devoted to a general probabilistic framework for complex sampling designs and detailed illustrations of the theory in the context of a number of examples. Section \ref{section:theory} studies the global and local uniform limit theorems for the Horvitz-Thompson empirical process. Section \ref{section:applications} gives applications of the theory developed in Section \ref{section:theory} to the aforementioned statistical problems. Proofs are collected in Sections \ref{section:proof_theory}-\ref{section:remaining_proof}.


\subsection{Notation}\label{section:notation}
For a real-valued measurable function $f$ defined on $(\mathcal{X},\mathcal{A},P)$ and $p\geq 1$, $\pnorm{f}{L_p(P)}\equiv \big(P\abs{f}^p)^{1/p}$ denotes the usual $L_p$-norm under $P$, and $\pnorm{f}{\infty}\equiv\pnorm{f}{L_\infty}\equiv  \sup_{x \in \mathcal{X}} \abs{f(x)}$. $f$ is said to be $P$-centered if $Pf=0$. $L_p(g,B)$ denotes the $L_p(P)$-ball centered at $g$ with radius $B$. For simplicity we write $L_p(B)\equiv L_p(0,B)$. 

Let $(\mathcal{F},\pnorm{\cdot}{})$ be a subset of the normed space of real functions $f:\mathcal{X}\to \R$. Let $\mathcal{N}(\epsilon,\mathcal{F},\pnorm{\cdot}{})$ be the $\epsilon$-covering number, and let  $\mathcal{N}_{[\,]}(\epsilon,\mathcal{F},\pnorm{\cdot}{})$ be the $\epsilon$-bracketing number; see page 83 of \cite{van1996weak} for more details. To avoid unnecessary measurability digressions, we assume that $\mathcal{F}$ is countable throughout the article. As usual, for any $\phi:\mathcal{F}\to \R$, we write $\pnorm{\phi(f)}{\mathcal{F}}$ for $ \sup_{f \in \mathcal{F}} \abs{\phi(f)}$. 

Throughout the article $\epsilon_1,\ldots,\epsilon_n$ will be i.i.d. Rademacher random variables independent of all other random variables. $C_{x}$ will denote a generic constant that depends only on $x$, whose numeric value may change from line to line unless otherwise specified. $a\lesssim_{x} b$ and $a\gtrsim_x b$ mean $a\leq C_x b$ and $a\geq C_x b$ respectively, and $a\asymp_x b$ means $a\lesssim_{x} b$ and $a\gtrsim_x b$ [$a\lesssim b$ means $a\leq Cb$ for some absolute constant $C$]. For two real numbers $a,b$, $a\vee b\equiv \max\{a,b\}$ and $a\wedge b\equiv\min\{a,b\}$. For two sequence of non-negative real numbers $\{a_n\},\{b_n\}$, $a_n\ll (\gg) b_n$ means $\lim_n a_n/b_n = 0 (\infty)$. We slightly abuse notation by defining $\log(x)\equiv \log(x\vee e)$ (and similarly for $\log \log (x)$).


\section{Sampling designs}\label{section:sampling_designs}

\subsection{Setup}\label{section:setup}
Let $U_N\equiv \{1,\ldots,N\}$, and $\mathcal{S}_N\equiv \{\{s_1,\ldots,s_n\}: n \leq N, s_i \in U_N, s_i\neq s_j,\forall i\neq j\}$ be the collection of subsets of $U_N$. We adopt the super-population framework as in \cite{rubin2005two}: Let $\{(Y_i,Z_i) \in \mathcal{Y}\times \mathcal{Z}\}_{i=1}^N$ be i.i.d. super-population samples defined on a probability space $(\mathcal{X},\mathcal{A},\Prob_{(Y,Z)})$, where $Y^{(N)}\equiv (Y_1,\ldots,Y_N)$ is the vector of interest, and $Z^{(N)}\equiv (Z_1,\ldots,Z_N)$ is an auxiliary vector. A sampling design is a function $\mathfrak{p}: \mathcal{S}_N\times \mathcal{Z}^{\otimes N}\to [0,1]$ such that 
\begin{enumerate}
	\item for all $s \in \mathcal{S}_N$, $z^{(N)}\mapsto \mathfrak{p}(s,z^{(N)})$ is measurable,
	\item for all $z^{(N)} \in \mathcal{Z}^{\otimes N}$, $s\mapsto \mathfrak{p}(s, z^{(N)})$ is a probability measure.
\end{enumerate}

The probability space we work with that includes both the super-population and the design-space is the same product space $({\mathcal{S}}_N\times \mathcal{X},  \sigma({\mathcal{S}}_N)\times \mathcal{A}, \Prob)$ as constructed in \cite{boistard2017functional}. We include the construction here for convenience of the reader: the probability measure $\Prob$ is uniquely defined through its restriction on all rectangles: for any $(s,E) \in {\mathcal{S}}_N\times \mathcal{A}$ (note that $\mathcal{S}_N$ is a finite set),
\begin{align}\label{def:prob_meas}
\Prob\left(s\times E\right)\equiv \int_E \mathfrak{p}(s, z^{(N)}(\omega))\ \d{\Prob_{(Y,Z)}(\omega)}\equiv \int_E \Prob_d(s,\omega)\ \d{\Prob_{(Y,Z)}(\omega)}.
\end{align}
We also use $P$ to denote the marginal law of $Y$ for notational convenience.

Given $(Y^{(N)},Z^{(N)})$ and a sampling design $\mathfrak{p}$, let $\{\xi_i\}_{i=1}^N\subset [0,1]$ be random variables defined on $({\mathcal{S}}_N\times \mathcal{X},  \sigma({\mathcal{S}}_N)\times \mathcal{A}, \Prob)$ with  $\pi_i\equiv \pi_i(Z^{(N)})\equiv \E[\xi_i|Z^{(N)}]$. We further assume that $\{\xi_i\}_{i=1}^N$ are independent of $Y^{(N)}$ conditionally on $Z^{(N)}$. Typically we take $\xi_i\equiv \bm{1}_{i \in s}$, where $s\sim \mathfrak{p}$, to be the indicator of whether or not the $i$-th sample $Y_i$ is observed (and in this case $\pi_i(Z^{(N)})=\sum_{s \in \mathcal{S}_N: i \in s} \mathfrak{p}(s,Z^{(N)})$), but we do not require this structure apriori. The $\pi_i$'s are often referred to as the first-order inclusion probabilities, and $\pi_{ij}\equiv \pi_{ij}(Z^{(N)})\equiv \E[\xi_i\xi_j|Z^{(N)}]$ are the second-order inclusion probabilities.

We define the Horvitz-Thompson empirical measure and empirical process as follows: for $\{\pi_i\},\{\xi_i\},\{Y_i\}$ as above, 
\begin{align*}
\Prob_N^{\pi}(f)\equiv \frac{1}{N}\sum_{i=1}^N \frac{\xi_i}{\pi_i} f(Y_i), \quad f \in \mathcal{F},
\end{align*}
and the associated Horvitz-Thompson empirical process
\begin{align*}
\G_N^{\pi}(f)\equiv \sqrt{N}\big(\Prob_N^{\pi}-P\big)(f),\quad f \in \mathcal{F}.
\end{align*}
The name of such an empirical process goes back to \cite{horvitz1952generalization}, in which $\Prob_N^\pi(Y)$ is used as an estimator for the population mean $P(Y)$. The usual empirical measure and empirical process (i.e. with $\xi_i/\pi_i\equiv 1$ for all $i=1,\ldots,N$) will be denoted by $\Prob_N,\G_N$ respectively.

\begin{assumption}\label{assumption:sampling_design}
Consider the following conditions on the sampling design $\mathfrak{p}$:

\, (A1) $\min_{1\leq i\leq N}\pi_i\geq \pi_0>0$.

\, (A2-LLN) $\frac{1}{N}\sum_{i=1}^N \big(\frac{\xi_i}{\pi_i}-1\big) =\mathfrak{o}_{\mathbf{P}}(1)$.

\, (A2-CLT) $\frac{1}{\sqrt{N}}\sum_{i=1}^N \big(\frac{\xi_i}{\pi_i}-1\big) =\mathcal{O}_{\mathbf{P}}(1)$.
\end{assumption}

(A1) is a common assumption in the literature. (A2-LLN) says that the weights $\{\xi_i/\pi_i\}$ satisfy a law of large numbers; while (A2-CLT) says that the weights $\{\xi_i/\pi_i\}$ have a $\sqrt{N}$ rate of convergence (so that a uniform central limit theorem for the more complicated Horvitz-Thompson empirical process $\G_N^{\pi}$ can be possible). As we will see below in the examples, a generic way of verifying these conditions is to obtain a good estimate on the correlations $\{\pi_{ij}-\pi_i\pi_j\}_{i\neq j}$. Conditions on (even higher order) correlations are very common in the literature, cf. \cite{boistard2012approximation,boistard2017functional,breidt2000local,cardot2010properties}.

\subsection{Examples of sampling designs}

\begin{example}[Sampling without replacement]\label{ex:sampling_without_replacement}
	A simple random sampling without replacement (SWOR) design $\mathfrak{p}$ is such that for all $z^{(N)} \in \mathcal{Z}^{\otimes N}$, $\mathfrak{p}(\cdot,z^{(N)})$ is the sampling without replacement design with cardinality $n(z^{(N)})$. In this case, $(\xi_1,\ldots,\xi_N)$ is a random permutation of $(1,\ldots,1,0,\ldots,0)$ that contains $1$ in the first $n(z^{(N)})$ components and $0$ otherwise. Then
	\begin{align*}
	\pi_i(z^{(N)}) = \E[\xi_i|z^{(N)}]= \frac{n(z^{(N)})}{N}.
	\end{align*}
	Condition (A1) holds if $n(z^{(N)})/N\geq c$ for some constant $c>0$. Condition (A2) is trivially satisfied since $\sum_{i=1}^N \xi_i = n(z^{(N)})$ and hence 
	\begin{align*}
	\sum_{i=1}^N\left(\frac{\xi_i}{\pi_i}-1\right)=\bigg( \frac{1}{n(z^{(N)})/N}\cdot \sum_{i=1}^N \xi_i\bigg) - N = 0.
	\end{align*}
\end{example}

\begin{example}[Bernoulli sampling]
	A Bernoulli sampling design $\mathfrak{p}$ is such that for all $z^{(N)} \in \mathcal{Z}^{\otimes N}$ and $s \in \mathcal{S}_N$, 
	\begin{align*}
	\mathfrak{p}(s,z^{(N)})= \prod_{ i \in s} \pi_i(z^{(N)}) \prod_{i \notin s} (1-\pi_i(z^{(N)})).
	\end{align*}
	In other words, conditionally on auxiliary random variables $Z^{(N)}$, the $\xi_i$'s are independent Bernoulli random variables with success probability $\pi_i(Z^{(N)})$. Note that we allow $\{\pi_i(Z^{(N)})\}$ to be unequal. Condition (A1) holds if $\pi_i(Z^{(N)})\geq c$ for some constant $c>0$. Since
	\begin{align*}
	\E \bigg(\frac{1}{\sqrt{N}}\sum_{i=1}^N \bigg(\frac{\xi_i}{\pi_i}-1\bigg) \bigg)^2
	& = \E_{ (Y^{(N)},Z^{(N)})} \bigg[\E_{\xi^{(N)} }\frac{1}{N}\sum_{i=1}^N \left(\frac{\xi_i}{\pi_i}-1\right)^2\bigg]=\mathcal{O}(1),
	\end{align*}
	condition (A2) is satisfied.
\end{example}

\begin{example}[Rejective sampling and high entropy sampling]
A rejective sampling design $\mathfrak{r}$ maximizes the entropy functional $\mathfrak{p}\mapsto \sum_{s \in \mathcal{S}_N} \mathfrak{p}(s) \log (\mathfrak{p}(s))$ over all sampling designs of fixed size $n$ with the constraint that the first-order inclusion probabilities equal $(\pi_1,\ldots,\pi_N)$ (cf. \cite{hajek1981sampling}). $\mathfrak{r}$ can also be realized as a conditional Bernoulli sampling design with appropriate success probabilities $(p_1,\ldots,p_N)$: for all $z^{(N)} \in \mathcal{Z}^{\otimes N}$ and $s \in \mathcal{S}_N$, 
	\begin{align*}
	\mathfrak{r}(s,z^{(N)})\propto \prod_{ i \in s} p_i(z^{(N)}) \prod_{i \notin s} (1-p_i(z^{(N)}))\bm{1}_{\abs{s}=n}.
	\end{align*}
	where $\sum_{i=1}^N p_i(z^{(N)})=n$. The relationship between $p_i$ and $\pi_i$ is given in, e.g. the statement and proof of Theorem 5.1 of \cite{hajek1964asymptotic}.
	
	Condition (A1) holds if $\pi_i(Z^{(N)})\geq c$ for some constant $c>0$. Let $d_N\equiv \sum_{i=1}^N \pi_i(z^{(N)})\big(1-\pi_i(z^{(N)})\big)$, and suppose that there exists some constant $K>0$ such that for $N$ large enough
	\begin{align}\label{cond:rej_sampling}
	\frac{N}{d_N}\leq K.
	\end{align}
	Then we have
	\begin{align*}
	&\E \bigg(\frac{1}{\sqrt{N}}\sum_{i=1}^N \left(\frac{\xi_i}{\pi_i}-1\right) \bigg)^2\\
	& = \E_{Y^{(N)},Z^{(N)}} \bigg[\E_{\xi^{(N)}}\frac{1}{N}\bigg( \sum_{i=1}^N \left(\frac{\xi_i}{\pi_i}-1\right)^2 +\sum_{i\neq j} \left(\frac{\xi_i}{\pi_i}-1\right)\left(\frac{\xi_j}{\pi_j}-1\right) \bigg)\bigg]\\
	&\lesssim 1+ \E_{Y^{(N)},Z^{(N)}}\bigg[N^{-1}\sum_{i\neq j}\abs{\pi_{ij}-\pi_i\pi_j} \bigg]=\mathcal{O}(1),
	\end{align*}
	where in the last inequality we used an old result due to Haj\'ek (cf. Theorem 5.2 of \cite{hajek1964asymptotic}). Hence condition (A2) is satisfied under (\ref{cond:rej_sampling}).

	Assuming (for simplicity) now $0<\inf_i \pi_i\leq \sup_i \pi_i<1$. Then Theorems 1 and 2 in \cite{berger1998rate} showed that high entropy designs satisfy a central limit theorem. More precisely, any sampling design $\mathfrak{p}$ with first-order inclusion probabilities $(\pi_1,\ldots,\pi_N)$ and the property that $D_{\mathrm{KL}}(\mathfrak{p}||\mathfrak{r})=\sum_{s \in \mathcal{S}_N} \mathfrak{p}(s) \log\frac{\mathfrak{p}(s)}{\mathfrak{r}(s)}\to 0$ satisfies a CLT. An alternative argument can be found in the discussions after Proposition \ref{prop:cov_G_pi} below. In particular, all such high entropy designs satisfy conditions (A1)-(A2-CLT) under $0<\inf_i \pi_i\leq \sup_i \pi_i<1$. The examples in this regard examined in \cite{berger1998rate} include Rao-Sampford sampling and successive sampling (under some scaling conditions).
\end{example}

\begin{example}[Stratified sampling]
	Suppose that ${U}_N$ is partitioned into $\{{U}_{N_1},\ldots,{U}_{N_k}\}$ according to the auxiliary variables $Z^{(N)}$ (we omit such dependence for simplicity). In other words, $\cup_{\ell=1}^k {U}_{N_\ell} = {U}_N$, ${U}_{N_\ell}\cap {U}_{N_{\ell'}}=\emptyset$ for $\ell \neq \ell'$ and $\abs{{U}_{N_\ell}}=N_\ell$ with $\sum_{\ell=1}^k N_\ell=N$. Let $n_1,\ldots,n_k$ be such that $\sum_{\ell=1}^k n_\ell = n$. Within each stratum ${U}_{N_\ell}$, we draw $n_\ell\leq N_\ell$ samples $s_\ell$ without replacement. The overall sample is $s=\cup_{\ell=1}^k s_\ell$. Similar to the calculations in Example \ref{ex:sampling_without_replacement}, since $\sum_{i \in s_\ell} \xi_i= n_\ell$, we have
	\begin{align*}
	\sum_{i=1}^N \left(\frac{\xi_i}{\pi_i}-1\right) = \sum_{\ell =1}^k \bigg(\frac{1}{n_\ell/N_\ell}\sum_{i \in s_\ell} \xi_i\bigg)-N =\bigg(\sum_{\ell=1}^k N_\ell\bigg)-N=0.
	\end{align*}
	Hence (A2) is satisfied. (A1) holds if $n_\ell/N_\ell\geq c$ for some constant $c>0$. 
\end{example}

\begin{example}[Stratified sampling with overlap]
Recently \cite{saegusa2018large} studied an interesting extension of the stratified sampling design as follows: suppose that $\{{U}_{N_1},\ldots,{U}_{N_k}\}\subset {U}_N$ are $k$ potentially overlapping `data sources' determined by the auxiliary variables $Z^{(N)}$, where $k$ is a fixed integer. Let $N_\ell \equiv \abs{{U}_{N_\ell}}$. For each source ${U}_{N_\ell}$, we draw $n_\ell\leq N_\ell$ samples $s_\ell$ without replacement. The overall sample is $s=\cup_{\ell=1}^k s_\ell$, which may include duplicate samples due to the overlapping nature of the data sources. This sampling scheme is also known as multiple-frame surveys, cf. \cite{hartley1962multiple,hartley1974multiple,lohr2006estimation}.

Let $\bar{\pi}_{i}^{(\ell)}\equiv n_\ell/N_\ell$ if $i \in {U}_{N_\ell}$ be the sampling probability of unit $i$ in the data source ${U}_{N_\ell}$, and let $\bar{\xi}_i^{(\ell)}$ be the indicator of whether or not unit $i$ is sampled in ${U}_{N_\ell}$. Following \cite{saegusa2018large}, we consider the following variant of the Horvitz-Thompson empirical measure (or \emph{Hartley empirical measure} as it is named in \cite{saegusa2018large}):
\begin{align*}
\Prob_N^H (f) \equiv \frac{1}{N}\sum_{i=1}^N \sum_{\ell =1}^{k} \frac{\bar{\xi}_{i}^{(\ell)}\rho_i^{(\ell)}  }{\bar{\pi}_i^{(\ell)}} \bm{1}_{i \in {U}_{N_\ell}} f(Y_i),
\end{align*}
and the associated (Hartley) empirical process
\begin{align*}
\G_N^H(f)\equiv \sqrt{N}\big(\Prob_N^H-P\big)(f).
\end{align*}
Here the weights $\{\rho_i^{(\ell)}\equiv \rho_i^{(\ell)}(z^{(N)}) \in [0,1] \}$ are such that $\sum_{\ell=1}^{k} \rho_i^{(\ell)}(z^{(N)})=1$ and that $\rho_{i}^{(\ell)}=0$ if $i\notin {U}_{N_\ell}$. Now letting
\begin{align}\label{def:hartley_xi}
\pi_i \equiv \prod_{\ell=1}^{k} \bar{\pi}_i^{(\ell)},\quad \xi_i\equiv \sum_{\ell=1}^{k} \bigg(\bm{1}_{i \in {U}_{N_\ell}}\bar{\xi}_i^{(\ell)} \rho_i^{(\ell)} \prod_{\ell' \neq \ell} \bar{\pi}_i^{(\ell')}\bigg) \in [0,1],
\end{align}
we see that the Hartley empirical measure $\Prob_N^H$ and the associated empirical process $\G_N^H$ reduces to the Horvitz-Thompson empirical measure and empirical process with $\{\pi_i,\xi_i\}$ specified in (\ref{def:hartley_xi}).

Condition (A1) holds if $n_\ell/N_\ell\geq c$ for some constant $c>0$ (by noting that $k$ is a fixed constant that does not depend on $Z^{(N)}$). Now we verify (A2). Note that
\begin{align*}
\frac{1}{\sqrt{N}}\sum_{i=1}^N \left(\frac{\xi_i}{\pi_i}-1\right) &= \frac{1}{\sqrt{N}}\bigg[\sum_{i=1}^N \sum_{\ell =1}^{k} \frac{\bar{\xi}_{i}^{(\ell)}\rho_i^{(\ell)}  }{\bar{\pi}_i^{(\ell)}} \bm{1}_{i \in {U}_{N_\ell}} -N\bigg]\\
& = \sum_{\ell =1}^{k} \frac{1}{\sqrt{N}}\sum_{i=1}^N \bigg(\frac{\bar{\xi}_{i}^{(\ell)} }{\bar{\pi}_i^{(\ell)}}-1\bigg)\rho_i^{(\ell)}  \bm{1}_{i \in {U}_{N_\ell}}=\mathcal{O}_{\mathbf{P}}(1),
\end{align*}
where the last line follows by computing the second moment:
\begin{align*}
&\E \bigg[\frac{1}{\sqrt{N}}\sum_{i=1}^N \bigg(\frac{\bar{\xi}_{i}^{(\ell)} }{\bar{\pi}_i^{(\ell)}}-1\bigg)\rho_i^{(\ell)}  \bm{1}_{i \in {U}_{N_\ell}}\bigg]^2\\
&\lesssim 1+ \frac{1}{N}\sum_{i\neq j \in {U}_{N_\ell}}\E_{(Y^{(N)},Z^{(N)})} \bigg[ \biggabs{\E_{\xi^{(N)} }\bigg(\frac{\bar{\xi}_{i}^{(\ell)} }{\bar{\pi}_i^{(\ell)}}-1\bigg)\bigg(\frac{\bar{\xi}_{j}^{(\ell)} }{\bar{\pi}_j^{(\ell)}}-1\bigg)} \bigg]= \mathcal{O}(1).
\end{align*}
This verifies (A2-CLT). 

From the above derivation it is easy to see that (A1)-(A2-CLT) hold with the sampling without replacement design replaced by Bernoulli sampling and rejective sampling designs.

We also note that different choices of the weights $\{\rho_i^{(\ell)}\equiv \rho_i^{(\ell)}(z^{(N)}) \in [0,1] \}$ lead to different asymptotic variances. Since this issue is not the main concern of this paper, we refer the readers to \cite{saegusa2018large} for the optimal choice of weights in the context of Bernoulli sampling and sampling without replacement designs.
\end{example}

\section{Theory}\label{section:theory}

In this section, we will be mainly interested in the global and local behavior of the Horvitz-Thompson empirical process. In particular, we prove a Glivenko-Cantelli theorem and a Donsker theorem that provide global information concerning the Horvitz-Thompson empirical process in the limit. As will be seen, our formulation requires almost minimal conditions. We further study local behavior of the Horvitz-Thompson empirical process by characterizing its local asymptotic modulus and several ratio limit theorems. Understanding the local behavior of the Horvitz-Thompson empirical process plays a key role in applications to statistical problems as will be demonstrated in Section \ref{section:applications}. Corresponding results for the calibrated version of the Horvitz-Thompson empirical process are also included. We also discuss uniform limit theorems for some variants of the Horvitz-Thompson empirical process and their conditional versions thereof.

\subsection{Global and local limit theorems}

First we study the Glivenko-Cantelli theorem. We say that $\mathcal{F}$ is $P$-Glivenko-Cantelli if and only if $\sup_{f \in \mathcal{F}}\abs{(\Prob_N-P)(f)}=\mathfrak{o}_{\mathbf{P}}(1)$.

\begin{theorem}\label{thm:ULLN_HT}(Glivenko-Cantelli Theorem)
	Suppose that (A1) and (A2-LLN) hold. If $\mathcal{F}$ is $P$-Glivenko-Cantelli, then 
	\begin{align*}
	\sup_{f \in \mathcal{F}}\abs{(\Prob_N^\pi-P)(f)}=\mathfrak{o}_{\mathbf{P}}(1).
	\end{align*}
\end{theorem}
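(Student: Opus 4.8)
The plan is to compare the Horvitz--Thompson empirical process with the ordinary one and then invoke the hypothesis. I would write
\[
(\Prob_N^\pi-P)(f)=\underbrace{(\Prob_N^\pi-\Prob_N)(f)}_{=:D_N(f)}+(\Prob_N-P)(f).
\]
Since $\mathcal{F}$ is $P$-Glivenko--Cantelli, $\sup_{f\in\mathcal{F}}\abs{(\Prob_N-P)(f)}=\mathfrak{o}_{\mathbf{P}}(1)$, so it suffices to show $\pnorm{D_N(f)}{\mathcal{F}}=\mathfrak{o}_{\mathbf{P}}(1)$, where $D_N(f)=\frac{1}{N}\sum_{i=1}^N m_i f(Y_i)$ with multipliers $m_i:=\xi_i/\pi_i-1$. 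Assumption (A1) enters first, to control the multipliers: since $\xi_i\in[0,1]$ and $\pi_i\geq\pi_0$, we have $\abs{m_i}=\abs{\xi_i/\pi_i-1}\leq \pi_0^{-1}+1$, so the $m_i$ are uniformly bounded.

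Next I would reduce to a uniformly bounded class. Letting $F$ be an envelope of $\mathcal{F}$ and splitting $f=f\mathbf{1}_{F\leq M}+f\mathbf{1}_{F>M}$, the tail contribution to $D_N$ is bounded in absolute value by $(\pi_0^{-1}+1)\cdot\frac{1}{N}\sum_{i=1}^N F(Y_i)\mathbf{1}_{F(Y_i)>M}$, which by the ordinary law of large numbers tends to $(\pi_0^{-1}+1)\,P\big[F\mathbf{1}_{F>M}\big]$ and is made arbitrarily small by taking $M$ large; hence it is enough to treat a uniformly bounded (still $P$-Glivenko--Cantelli) class. For such a class the key structural input is that, conditionally on $Z^{(N)}$, the multipliers $\{m_i\}$ are independent of the samples $\{Y_i\}$ and satisfy $\E[m_i\mid Z^{(N)}]=0$. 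I would therefore condition on $(Z^{(N)},\xi^{(N)})$ --- which fixes the $m_i$ while leaving $\{Y_i\}$ independent across $i$ --- center each summand at its conditional mean $m_i\,\E[f(Y_i)\mid Z_i]$, and symmetrize the remaining sample randomness. This is legitimate precisely because the $m_i$ act as fixed coefficients once we condition; the contraction principle then strips off the bounded multipliers at the cost of the factor $\max_i\abs{m_i}\leq \pi_0^{-1}+1$, leaving the ordinary symmetrized process $\frac{1}{N}\sum_i \epsilon_i f(Y_i)$, whose supremum over $\mathcal{F}$ is $\mathfrak{o}_{\mathbf{P}}(1)$ by the $P$-Glivenko--Cantelli property.

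The main obstacle is the conditional-mean term left behind by this centering, namely $\frac{1}{N}\sum_{i=1}^N m_i\,\E[f(Y_i)\mid Z_i]$, which is purely $Z^{(N)}$-driven and cannot be symmetrized through the sample randomness. Here the dependence among the multipliers is genuine and they cannot be decoupled from $Z^{(N)}$. This is exactly the point at which Assumption (A2-LLN) --- a law of large numbers for the aggregated weights --- must be leveraged, together with the Glivenko--Cantelli structure transported to the contraction image $\{z\mapsto\E[f(Y)\mid Z=z]:f\in\mathcal{F}\}$ and, implicitly, control of the inclusion-probability correlations $\{\pi_{ij}-\pi_i\pi_j\}$ as in the examples of Section \ref{section:sampling_designs}. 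I expect the uniform control of this term to be the crux: it is where a multiplier-type inequality for dependent multipliers (in the spirit of \cite{han2017sharp}) is needed, and where the heuristic that a uniform limit theorem is ``no harder than'' the corresponding finite-dimensional statement is actually cashed out.
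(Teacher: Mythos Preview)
Your decomposition and the centering choice create a real difficulty that the paper avoids. The paper does \emph{not} center at the conditional mean $\E[f(Y_i)\mid Z_i]$; it centers at the deterministic quantity $Pf$, writing
\[
(\Prob_N^\pi-P)(f)=\frac{1}{N}\sum_{i=1}^N \frac{\xi_i}{\pi_i}\bigl(f(Y_i)-Pf\bigr)+Pf\cdot\frac{1}{N}\sum_{i=1}^N\Bigl(\frac{\xi_i}{\pi_i}-1\Bigr).
\]
With this split, the second term is \emph{exactly} (A2-LLN) multiplied by $\sup_f|Pf|<\infty$, so there is no ``obstacle'' term at all. The first term is then handled by a multiplier inequality (Proposition~\ref{prop:multiplier_ineq}): an Abel summation over the ordered weights $\xi_i/\pi_i\in[0,\pi_0^{-1}]$ together with the L\'evy-type maximal inequality yields
\[
\E\Bigl\|\sum_{i=1}^N \frac{\xi_i}{\pi_i}\bigl(f(Y_i)-Pf\bigr)\Bigr\|_{\mathcal{F}}\leq (C/\pi_0)\,\E\Bigl\|\sum_{i=1}^N\bigl(f(Y_i)-Pf\bigr)\Bigr\|_{\mathcal{F}},
\]
and the right side is $\mathfrak{o}(N)$ by the $P$-Glivenko--Cantelli hypothesis and the reversed-submartingale argument. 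No symmetrization, no conditioning on $(Z^{(N)},\xi^{(N)})$, and no passage to the class $\{z\mapsto\E[f(Y)\mid Z=z]\}$ are needed.

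Your conditional-mean term $\tfrac{1}{N}\sum_i m_i\,\E[f(Y_i)\mid Z_i]$, by contrast, is a genuine gap under the stated hypotheses. The multipliers $m_i$ depend on the \emph{entire} vector $Z^{(N)}$, so they cannot be decoupled from the summands $\E[f(Y_i)\mid Z_i]$ by any conditioning that would leave the $Z_i$ i.i.d.; neither symmetrization nor the multiplier inequality of Proposition~\ref{prop:multiplier_ineq} applies to this term. Writing it as $\tfrac{1}{N}\sum_i m_i\bigl(\E[f(Y_i)\mid Z_i]-Pf\bigr)+Pf\cdot\tfrac{1}{N}\sum_i m_i$ isolates the (A2-LLN) piece, but the first piece still involves weights coupled to their own data, and there is no reason the conditional-expectation class is $P_Z$-Glivenko--Cantelli. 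Your appeal to correlation bounds on $\pi_{ij}-\pi_i\pi_j$ would introduce assumptions strictly stronger than (A2-LLN), which is precisely what the theorem avoids. The fix is simply to center at $Pf$ from the outset.
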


Recall the notion of weak convergence in the Hoffmann-J\o rgensen sense: Let $\{X(f)\}_{f \in \mathcal{F}}$ be a bounded process whose finite-dimensional laws correspond to the finite dimensional projections of a tight Borel law on $\ell^\infty(\mathcal{F})$. Let $\{X_N(f)\}_{f \in \mathcal{F}}$ be bounded processes. We say that $X_N \rightsquigarrow X$ in $\ell^\infty(\mathcal{F})$ if and only if $
\E^\ast H(X_N)\to \E H(\tilde{X})$ for all $H \in C_b(\ell^\infty(\mathcal{F}))$, where $C_b(\ell^\infty(\mathcal{F}))$ denotes all bounded continuous functions on $\ell^\infty(\mathcal{F})$, and $\tilde{X}$ is a measurable version of $X$ with separable range (so $H(\tilde{X})$ is measurable). Equivalently, $d_{\mathrm{BL}}(X_N,\tilde{X})\to 0$, where $d_{\mathrm{BL}}$ is the dual bounded Lipschitz metric (cf. pp 246 of \cite{gine2015mathematical}). It is also well-known that $X_N \rightsquigarrow X$ in $\ell^\infty(\mathcal{F})$ if and only if $X_N$ converges to $X$ finite-dimensionally, and there exists a pseudo-metric $d$ on $\mathcal{F}$ such that for any $\delta_N\to 0$,
\begin{align*}
\sup_{d(f,g)\leq \delta_N} \abs{X_N(f)-X_N(g)}=\mathfrak{o}_{\mathbf{P}}(1).
\end{align*}
We refer the readers to \cite{gine2015mathematical,van1996weak} for more details. We say that $\mathcal{F}$ is $P$-Donsker if and only if $\G_N\rightsquigarrow \G$ in $\ell^\infty(\mathcal{F})$.

\begin{theorem}\label{thm:weak_convergence_HT}(Donsker Theorem)
Suppose that (A1) and (A2-CLT) hold. Further assume that
\begin{enumerate}
\item[(D1)] $\G_N^{\pi}$ converges finite-dimensionally to a tight Gaussian process $\G^{\pi}$.
\item[(D2)] $\mathcal{F}$ is $P$-Donsker.
\end{enumerate}
Then 
\begin{align*}
\G_N^{\pi}\rightsquigarrow \G^{\pi}\textrm{ in }\ell^\infty(\mathcal{F}).
\end{align*}
\end{theorem}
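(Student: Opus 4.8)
The plan is to reduce weak convergence to the asymptotic equicontinuity condition recalled just before the statement. Finite-dimensional convergence is handed to us by (D1), so it suffices to exhibit a semimetric for which $\mathcal{F}$ is totally bounded and $\G_N^\pi$ is asymptotically equicontinuous. I would use the $L_2(P)$ semimetric $\rho_P(f,g)\equiv\pnorm{f-g}{L_2(P)}$: it makes $(\mathcal{F},\rho_P)$ totally bounded by the $P$-Donsker hypothesis (D2), and, thanks to the lower bound (A1), it dominates the intrinsic covariance metric of the tight limit $\G^\pi$. Thus everything reduces to showing that, for every sequence $\delta_N\downarrow 0$,
\[
\sup_{\rho_P(f,g)\le\delta_N}\bigl|\G_N^\pi(f)-\G_N^\pi(g)\bigr|=\mathfrak{o}_{\mathbf{P}}(1).
\]

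Writing $w_i\equiv\xi_i/\pi_i$, $v_i\equiv w_i-1$, and $\bar h\equiv h-Ph$, I would split the increment as
\[
\G_N^\pi(f)-\G_N^\pi(g)=\G_N(f-g)+\frac{1}{\sqrt N}\sum_{i=1}^N v_i\,\overline{(f-g)}(Y_i)+\Bigl(\frac{1}{\sqrt N}\sum_{i=1}^N v_i\Bigr)P(f-g).
\]
The first term is the ordinary empirical increment, which is $\mathfrak{o}_{\mathbf{P}}(1)$ uniformly over $\rho_P(f,g)\le\delta_N$ by the asymptotic equicontinuity built into (D2). The last term is a scalar times $P(f-g)$; since $\abs{P(f-g)}\le\rho_P(f,g)\le\delta_N$ and the scalar is $\mathcal{O}_{\mathbf{P}}(1)$ by (A2-CLT), it too is uniformly negligible. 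The substance is the middle weighted-remainder term.

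For the remainder I would condition on $Z^{(N)}$, under which the weights $v_i$ are bounded by $1/\pi_0$ (by (A1)) and, crucially, independent of the $Y_i$. This is precisely the configuration for the multiplier inequality of \cite{han2017sharp}, whose point is that it asks for no independence \emph{among} the $v_i$. Applied to the conditionally centered summands $h(Y_i)-\E[h(Y_i)\mid Z_i]$, it bounds the conditional expected supremum of that part of the remainder by a $\pi_0$-dependent multiple of the symmetrized modulus $\E\pnorm{N^{-1/2}\sum_i\epsilon_i(h(Y_i)-\E[h(Y_i)\mid Z_i])}{\mathcal{H}_{\delta_N}}$, where $\mathcal{H}_{\delta_N}\equiv\{f-g:\rho_P(f,g)\le\delta_N\}$. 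A ghost-sample/desymmetrization step then replaces the conditional centering by the marginal one and reduces this to the genuine i.i.d.\ symmetrized empirical process over $\mathcal{H}_{\delta_N}$, which tends to $0$ by (D2); taking expectations over $Z^{(N)}$ and invoking Markov's inequality disposes of this piece.

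The main obstacle is the leftover conditional-mean piece of the remainder, the pure design process $N^{-1/2}\sum_i v_i\,\E[(f-g)(Y_i)\mid Z_i]$, which carries no data randomness once $Z^{(N)}$ is fixed and therefore cannot be symmetrized by the multiplier inequality. Tellingly, it vanishes identically when $Y\perp Z$, so the informative-auxiliary case is exactly the hard one. Here I would use that $h\mapsto\E[h\mid Z]$ is an $L_2$-contraction, so the indexing functions $\E[(f-g)\mid Z]$ still range over a totally bounded subset of $L_2(P_Z)$ of radius $\le\delta_N$, and control the supremum by a maximal/chaining inequality driven by the second-order structure of the design --- the quadratic forms $\sum_{i\ne j}\E[(f-g)\mid Z_i]\,\E[(f-g)\mid Z_j]\,(\pi_{ij}-\pi_i\pi_j)$ --- which is precisely what the covariance computation for $\G^\pi$ and correlation conditions of the type underlying (A2-CLT) are meant to bound, together with the boundedness of the weights from (A1). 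Showing that this design process is uniformly small over the shrinking index is where I expect the analysis to be most delicate.
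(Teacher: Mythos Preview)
Your overall plan and the handling of the first and third terms are fine. The gap is in the middle term. By conditioning on $Z^{(N)}$ and splitting $\overline{(f-g)}(Y_i)$ into a conditionally centered piece and a conditional-mean piece, you create the ``design process'' $N^{-1/2}\sum_i v_i\,\E[(f-g)(Y_i)\mid Z_i]$, which you cannot control under the stated hypotheses. Your proposed fix---chaining driven by the quadratic forms in $\pi_{ij}-\pi_i\pi_j$---would require second-order assumptions on the design of the type (F3) in Proposition~\ref{prop:cov_G_pi}, none of which are part of (A1), (A2-CLT), (D1), (D2). Moreover, even with such assumptions you would still need the class $\{\E[(f-g)\mid Z]:\rho_P(f,g)\le\delta\}$ to be asymptotically equicontinuous, and the conditional-expectation operator, though an $L_2$-contraction, does not in general preserve the Donsker property. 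So the plan for the design piece does not close.

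The paper avoids the design process entirely. The point is that the multiplier inequality (Proposition~\ref{prop:multiplier_ineq}) applies \emph{unconditionally} with the nonnegative bounded weights $w_i=\xi_i/\pi_i\in[0,1/\pi_0]$ and the \emph{$P$-centered} summands $h(Y_i)-Ph$: no conditioning on $Z^{(N)}$, no conditional centering. The order-statistics/Abel-summation step is pathwise and uses only $0\le w_i\le 1/\pi_0$; the resulting partial sums run along the permutation $\sigma$ induced by the ordered weights, and L\'evy's maximal inequality (valid for independent, not necessarily identically distributed, summands) applies conditionally on $(Z^{(N)},\xi^{(N)})$ since the $Y_i$'s are then independent and $\sigma$ is fixed. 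Because the full sum $\sum_{i=1}^N(h(Y_i)-Ph)$ is permutation-invariant, one obtains directly
\[
\E\sup_{h\in\mathcal{F}_{\delta_N}}\bigl|\tilde{\G}_N^\pi(h)\bigr|
\ \lesssim_{\pi_0}\ N^{-1/2}\,\E\sup_{h\in\mathcal{F}_{\delta_N}}\Bigl|\sum_{i=1}^N\bigl(h(Y_i)-Ph\bigr)\Bigr|\ \to\ 0
\]
by (D2) and Lemma~2.3.11 of \cite{van1996weak}. In your notation this handles your first and middle terms together (indeed $\G_N(h)+N^{-1/2}\sum_i v_i\bar h(Y_i)=\tilde{\G}_N^\pi(h)$), and no design process ever appears. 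The lesson: use the multipliers $w_i=\xi_i/\pi_i$, not $v_i=w_i-1$, and keep the centering at $P$ rather than at the conditional mean.
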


Apparently, the finite-dimensional convergence condition (D1) above is necessary for a uniform central limit theorem in $\ell^\infty(\mathcal{F})$. (D2) is also minimal. One intriguing feature of Theorem \ref{thm:weak_convergence_HT} is that a uniform central limit theorem follows essentially automatically as long as the \emph{finite-dimensional convergence property of the Horvitz-Thompson empirical process is verified}. 
A similar phenomenon was also observed in \cite{shorack1973convergence} in a univariate non-i.i.d. case.

Although being necessary, establishing a finite-dimensional CLT for $\G_N^\pi$ and identifying the covariance structure of $\G^\pi$ can be a non-trivial problem for general sampling designs; see e.g. \cite{berger1998ratevariance,berger1998rate,chauvet2015coupling,fuller2011sampling,hajek1964asymptotic,rosen1965limit,rosen1967central,rosen1972asymptotic,rosen1974asymptotic,vivsek1979asymptotic}. Below we exploit one possible strategy, inspired by \cite{boistard2017functional}, for identifying the covariance structure of $\G^\pi$.
\begin{proposition}\label{prop:cov_G_pi}
	Suppose (A1) and the following conditions hold.
	\begin{enumerate}
		\item[(F1)] For any i.i.d. bounded random variables $\{V_i\}$ defined on $(\mathcal{X},\mathcal{A},\Prob_{(Y,Z)})$, 
		\begin{align*}
		\frac{1}{S_N}\bigg(\frac{1}{N}\sum_{i=1}^N \frac{\xi_i}{\pi_i} V_i - \frac{1}{N}\sum_{i=1}^N V_i\bigg) \rightsquigarrow \mathcal{N}(0,1)
		\end{align*}
		holds under $\Prob_d(\cdot,\omega)$ (notation defined in (\ref{def:prob_meas})) for $\Prob_{(Y,Z)}$-a.s. $\omega \in \mathcal{X}$. Here $S_N$ is the design-based variance given by
		\begin{align*}
		S_N^2\equiv \frac{1}{N^2}\sum_{1\leq i,j\leq N} \frac{\pi_{ij}-\pi_i\pi_j}{\pi_i\pi_j} V_iV_j.
		\end{align*}
		\item[(F2)] The (essentially) first-order inclusion probabilities satisfy
		\begin{align*}
		 \frac{1}{N}\sum_{i=1}^N \frac{\pi_{ii}-\pi_i^2}{\pi_i^2}\to_{\Prob_{(Y,Z)}}\mu_{\pi1}.
		\end{align*}
		\item[(F3)] The second-order inclusion probabilities satisfy 
		\begin{align*}
		\sup_{N \in \N} \sup_{1\leq i\neq j\leq N} N\abs{\pi_{ij}-\pi_i\pi_j}\leq K,\qquad  \frac{1}{N}\sum_{i\neq j}\frac{\pi_{ij}-\pi_i\pi_j}{\pi_i\pi_j}\to_{\Prob_{(Y,Z)}}\mu_{\pi2},
		\end{align*}
		where $K>0$ is an absolute constant.
	\end{enumerate}
	If $\mathcal{F}$ is uniformly bounded, then $\G_N^\pi$ converges finite-dimensionally to a tight Gaussian process $\G^\pi$ whose covariance structure is given by the following: for any $f,g \in \mathcal{F}$,
	\begin{align*}
	\mathrm{Cov} \big(\G^\pi(f), \G^\pi(g)\big)&= (1+\mu_{\pi 1}) P (fg) - (1-\mu_{\pi 2}) (Pf)(Pg)\\
	& = P(fg)-(Pf)(Pg)+ \mu_{\pi1}P(fg)+\mu_{\pi2}(Pf)(Pg).
	\end{align*}
\end{proposition}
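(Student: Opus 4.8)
The plan is to exploit the linearity of $f\mapsto\G_N^\pi(f)$ and reduce the whole statement to a single one-dimensional central limit theorem. By the Cram\'er--Wold device it suffices to prove, for each fixed finite linear combination $F=\sum_l a_l f_l$ (again uniformly bounded), that
\[
\G_N^\pi(F)\rightsquigarrow \mathcal N\big(0,\,(1+\mu_{\pi1})P(F^2)-(1-\mu_{\pi2})(PF)^2\big),
\]
after which bilinearity recovers the stated $\mathrm{Cov}(\G^\pi(f),\G^\pi(g))$. Positive semidefiniteness of this form, and hence the existence of the limiting Gaussian process, follows from $P(F^2)\ge(PF)^2$ together with $\mu_{\pi1}\ge0$ (as $\pi_{ii}\ge\pi_i^2$) and $\mu_{\pi1}+\mu_{\pi2}\ge0$, the latter being the limit of the nonnegative design variance of $N^{-1/2}\sum_i(\xi_i/\pi_i-1)$; moreover the induced intrinsic semimetric is $\lesssim\pnorm{\cdot}{L_2(P)}$, so tightness of $\G^\pi$ is inherited from that of $\G$. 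The first move is the decomposition
\[
\G_N^\pi(F)=\underbrace{\frac{1}{\sqrt N}\sum_{i=1}^N\Big(\frac{\xi_i}{\pi_i}-1\Big)F(Y_i)}_{\equiv\,A_N(F)}+\frac{1}{\sqrt N}\sum_{i=1}^N\big(F(Y_i)-PF\big),
\]
whose second summand is the ordinary empirical process $\G_N(F)$, converging to $\mathcal N(0,P(F^2)-(PF)^2)$ by the classical CLT. All the design-specific content sits in $A_N(F)$, which, conditionally on the super-population field $\mathcal H_N\equiv\sigma(Y^{(N)},Z^{(N)})$, is a functional of the design variables $\{\xi_i\}$ alone.

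Conditionally on $\mathcal H_N$ I would apply (F1) with $V_i=F(Y_i)$ (i.i.d.\ and bounded), giving $A_N(F)/\sqrt{N S_N^2}\rightsquigarrow\mathcal N(0,1)$ under $\Prob_d(\cdot,\omega)$ for $\Prob_{(Y,Z)}$-a.e.\ $\omega$, where
\[
N S_N^2=\frac1N\sum_i w_i F(Y_i)^2+\frac1N\sum_{i\neq j}u_{ij}F(Y_i)F(Y_j),\qquad w_i\equiv\frac{\pi_{ii}-\pi_i^2}{\pi_i^2},\ \ u_{ij}\equiv\frac{\pi_{ij}-\pi_i\pi_j}{\pi_i\pi_j}.
\]
The crux---and the step I expect to be hardest---is to show that this random conditional variance converges in $\Prob_{(Y,Z)}$-probability to the deterministic constant $\sigma_F^2\equiv\mu_{\pi1}P(F^2)+\mu_{\pi2}(PF)^2$. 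Here $0\le w_i\le\pi_0^{-1}$ by (A1), and the uniform bound $\sup_{i\neq j}N\abs{\pi_{ij}-\pi_i\pi_j}\le K$ of (F3) forces $\abs{u_{ij}}=O(1/N)$, so the off-diagonal double sum is $O(1)$ and amenable to a second-moment analysis. Writing $\frac1N\sum_i w_i F(Y_i)^2=P(F^2)\,\frac1N\sum_i w_i+\frac1N\sum_i w_i(F(Y_i)^2-P(F^2))$ and expanding $F(Y_i)F(Y_j)=(PF)^2+PF(\bar F_i+\bar F_j)+\bar F_i\bar F_j$ with $\bar F_i\equiv F(Y_i)-PF$, the unweighted pieces tend to $\mu_{\pi1}P(F^2)$ and $\mu_{\pi2}(PF)^2$ by (F2)--(F3); everything else is a fluctuation term. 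The genuine obstacle is the decoupling of the $\sigma(Z^{(N)})$-measurable weights $w_i,u_{ij}$ from the values $F(Y_i)$: conditioning on $Z^{(N)}$ turns each fluctuation into a sum of conditionally independent, uniformly bounded summands with conditional variance $O(1/N)$, but matching the corresponding conditional means to zero is precisely the point at which the weights must be asymptotically uncorrelated with the $Z$-conditional moments of $F$. Establishing this, and hence the variance limit $N S_N^2\to\sigma_F^2$, is the heart of the proof.

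Finally I would assemble the two pieces through conditional characteristic functions. Writing $\E\big[e^{isA_N(F)+it\G_N(F)}\big]=\E\big[e^{it\G_N(F)}\,\E[e^{isA_N(F)}\mid\mathcal H_N]\big]$ and combining (F1) with the variance limit $N S_N^2\to\sigma_F^2$, the inner conditional expectation converges in probability to the constant $e^{-s^2\sigma_F^2/2}$; since it is bounded by $1$ and $\G_N(F)$ is $\mathcal H_N$-measurable, dominated convergence and the classical CLT for $\G_N(F)$ yield joint convergence with asymptotic independence of the design and sampling parts. Hence $\G_N^\pi(F)\rightsquigarrow\mathcal N(0,\sigma_F^2+P(F^2)-(PF)^2)$, the desired one-dimensional limit; the full finite-dimensional convergence and the covariance formula then follow from the Cram\'er--Wold reduction and bilinearity as in the first paragraph.
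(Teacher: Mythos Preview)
Your proof follows the same route as the paper's: Cram\'er--Wold reduction, the decomposition $\G_N^\pi(f)=\sqrt{N}(\Prob_N^\pi-\Prob_N)(f)+\G_N(f)$, application of (F1) to the design piece, and assembly of the two pieces via conditional characteristic functions (which is exactly the content of the paper's Lemma~\ref{lem:sum_cond_CLT}). The variance limit $NS_N^2\to_{\Prob_{(Y,Z)}}\mu_{\pi1}P(F^2)+\mu_{\pi2}(PF)^2$ that you single out as the crux is likewise the one step the paper does not spell out, deferring instead to a second-moment computation under (F3) ``as in Lemma~B.1 of \cite{boistard2017functional}''; the decoupling concern you raise about the $Z^{(N)}$-measurable weights and the $Z$-conditional moments of $F$ is not explicitly addressed in the paper's proof either.
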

The above covariance formula can be inferred from the decomposition 
\begin{align*}
\G_N^\pi = \sqrt{N}(\Prob_N^\pi - P) = \sqrt{N}(\Prob_N-P)+\sqrt{N}(\Prob_N^\pi-\Prob_N),
\end{align*}
where the covariance structure of the second term $\sqrt{N}(\Prob_N^\pi-\Prob_N)$ can be deduced from conditions (F1)-(F3). These conditions are also used in \cite{boistard2017functional}:  (F1) corresponds to (HT1) in \cite{boistard2017functional}, (F2) corresponds to condition (i) in Proposition 3.1 in \cite{boistard2017functional}, and (F3) corresponds to (C2) and condition (ii) in Proposition 3.1 in \cite{boistard2017functional}. Combined with Proposition \ref{prop:cov_G_pi}, we see that Theorem \ref{thm:weak_convergence_HT} extends Proposition 3.2 of \cite{boistard2017functional} in at least the following directions: (i) we work with a general bounded $P$-Donsker class $\mathcal{F}$ instead of one particular class $\{\bm{1}_{(-\infty,t]}: t \in \R\}$, and (ii) we weaken conditions for the sampling designs, i.e. (C3)-(C4) in \cite{boistard2017functional} are no longer required. We should, however, remind readers that Proposition \ref{prop:cov_G_pi} is not exhaustive for identifying the covariance structure of $\G^\pi$, and therefore it is possible that the current conditions in Proposition \ref{prop:cov_G_pi} can be further weakened via other approaches.

The conditions in Proposition \ref{prop:cov_G_pi} are verified in \cite{boistard2017functional} under a slightly different setting, but for the convenience of the reader, we provide some details for various sampling designs (see Table \ref{table:mu_sampling_design} for a summary):

\begin{itemize}
\item For sampling without replacement, $\pi_{ii}=\pi_i=n/N$ and $\pi_{ij}=n(n-1)/N(N-1)$ for $i\neq j$. If $n/N\to \lambda \in (0,1)$, (F1) can be verified using Haj\'ek's rank central limit theorem (cf. \cite{hajek1961some}, or Proposition A.5.3 of \cite{van1996weak}), and (F2)-(F3) are satisfied with $\mu_{\pi 1}=\lambda^{-1}-1$ and $\mu_{\pi2}=1-\lambda^{-1}$. The cases for stratified sampling with/without overlaps can be considered analogously.
\item For Bernoulli sampling, $\pi_{ii}=\pi_i$ and $\pi_{ij}=\pi_i\pi_j$ for $i\neq j$. If $\{\pi_i\}_{i=1}^N \subset [\epsilon,1-\epsilon] (\epsilon>0)$, (F1) can be verified using the Lindeberg-Feller central limit theorem, and (F2)-(F3) are satisfied with $\mu_{\pi 1}= \lim_{N} N^{-1}\sum_{i=1}^N(\pi_i^{-1}-1)$ and $\mu_{\pi 2}=0$. 
\item For rejective sampling with first-order inclusion probabilities $\{\pi_i\}_{i=1}^N \subset [\epsilon,1-\epsilon] (\epsilon>0)$, let $d_N=\sum_{i=1}^N \pi_i(1-\pi_i)$. (F1) can be verified by Theorem 1 of \cite{berger1998rate}. Using Theorem 1 of \cite{boistard2012approximation}, (F2)-(F3) are satisfied with $\mu_{\pi 1} = \lim_N N^{-1}\sum_{i=1}^N (\pi_i^{-1}-1)$ and 
\begin{align*}
\qquad\qquad\mu_{\pi2}=\lim_{N} \bigg[-\frac{1}{N}\sum_{i\neq j} \frac{(1-\pi_i)(1-\pi_j)}{d_N}+\mathcal{O}(Nd_N^{-2})\bigg]=- d^{-1}(1-\lambda)^2,
\end{align*}
provided $n/N\to \lambda \in (0,1)$ and $d_N/N\to d$. The covariance structure of $\G^\pi$ with high entropy sampling designs is the same as the rejective sampling design, which can be verified using the same arguments in page 1754-1755 of \cite{boistard2017functional}.
\end{itemize}

\begin{table}
	\begin{center}
	\begin{tabular}{|c||c|c|c|}
		\hline 
		& SWOR & Bernoulli  &Rejective\\
		\hline\hline
		$\mu_{\pi1}$ & $\lambda^{-1}-1$ & $A-1$&$A-1$ \\
		\hline
		$\mu_{\pi2}$ & $1-\lambda^{-1}$ & $0$& $-d^{-1}(1-\lambda)^2$\\
		\hline
	\end{tabular}
\end{center}
\caption{Values of $\mu_{\pi1}, \mu_{\pi2}$ for different sampling designs. Here $\lambda = \lim_N n/N, A = \lim_N N^{-1} \sum_{i=1}^N \pi_i^{-1}, d=\lim_N N^{-1}\sum_{i=1}^N \pi_i(1-\pi_i)$. }
\label{table:mu_sampling_design}
\vspace{-2em}
\end{table}

Hence, under the assumptions of Proposition \ref{prop:cov_G_pi}, the covariance formula for $\G^\pi$ can be written more explicitly: for any $f,g \in \mathcal{F}$,
\begin{align*}
&\mathrm{Cov} \big(\G^\pi(f), \G^\pi(g)\big)\\
& = 
\begin{cases}
\lambda^{-1}\big(P(fg)-(Pf)(Pg)\big) & \textrm{under SWOR}\\
A \cdot P(fg)-(Pf)(Pg) & \textrm{under Bernoulli}\\
A\cdot P(fg)-\big[1+d^{-1}(1-\lambda)^2\big](Pf)(Pg) & \textrm{under Rejective}\\
\end{cases}
\end{align*}
Here $\lambda = \lim_N n/N, A = \lim_N N^{-1}\sum_{i=1}^N \pi_i^{-1}, d = \lim_N N^{-1}\sum_{i=1}^N \pi_i(1-\pi_i)$.


Our next goal is to study the local behavior of the Horvitz-Thompson empirical process. Although being of crucial importance in applications to semi-/non-parametric statistics, to the best knowledge of the authors, this issue has not been addressed in the literature. 

We first study \emph{local asymptotic modulus} of the Horvitz-Thompson empirical process, which has been considered historically for VC-type classes of sets and function classes in \cite{alexander1987rates,gine2006concentration,gine2003ratio} in the context of usual empirical processes. As will be clear below, one of the strengths of the formulation of our theorems is that finite-dimensional convergence of $\G_N^\pi$ is not required for studying the local behavior of $\G_N^\pi$---we only require that the weights have a $\sqrt{N}$ convergence rate as in (A2-CLT).

Before formally stating the results on the local behavior of the Horvitz-Thompson empirical process, we need some definitions.

\begin{definition}\label{def:local_asymp_moduli}
	A \emph{local asymptotic modulus} of the Horvitz-Thompson empirical process indexed by a class of functions $\mathcal{F}$ is an increasing function $\phi(\cdot)$ for which there exist some $r_N\ll \delta_N\leq 1/2$, both non-increasing with $N\mapsto \sqrt{N}\delta_N$ non-decreasing, such that
	\begin{align}\label{def:local_asymp_moduli_1}
    \sup_{f \in \mathcal{F}: r_N^2<Pf^2\leq \delta_N^2} \frac{\abs{\G_N^\pi(f)}}{\phi(\sigma_P f)}=\mathcal{O}_{\mathbf{P}}(1).
	\end{align}
	Here $\sigma_P^2(f) = \mathrm{Var}_P(f)$.
\end{definition}

\begin{definition}
We say that $\mathcal{F}$ satisfies an entropy condition with exponent $ \alpha \in (0,2)$ if either
\begin{align*}
\sup_Q \log \mathcal{N}(\epsilon \| F \|_{L_2 (Q)}, \mathcal{F} , L_2 (Q) ) \lesssim \epsilon^{-\alpha},
\end{align*}
where the supremum is over all finitely discrete measures $Q$ on $(\mathcal{X},\mathcal{A})$; or
\begin{align*}
\log \mathcal{N}_{[\, ]} (\epsilon , \mathcal{F}, L_2 (P) ) \lesssim \epsilon^{-\alpha}.
\end{align*}
\end{definition}

The entropy condition is well-understood in the literature; we only refer the readers to \cite{gine2015mathematical,van2000empirical,van1996weak} for various examples in this regard.

\begin{theorem}\label{thm:local_asymp_moduli}
Suppose that (A1) and (A2-CLT) hold and $\mathcal{F}$ is a uniformly bounded class satisfying an entropy condition with exponent $\alpha \in (0,2)$. Then $\omega_\alpha(t)=t^{1-\frac{\alpha}{2}}$ is a local asymptotic modulus for the Horvitz-Thompson empirical process indexed by $\mathcal{F}$, i.e. (\ref{def:local_asymp_moduli_1}) holds with $\phi=\omega_\alpha$.
\end{theorem}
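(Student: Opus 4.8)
The plan is to exploit the exact linear decomposition
\[
\G_N^\pi(f) = \G_N(f) + \mathbb{W}_N(f), \qquad \mathbb{W}_N(f)\equiv \frac{1}{\sqrt N}\sum_{i=1}^N\Big(\frac{\xi_i}{\pi_i}-1\Big)f(Y_i),
\]
which splits $\G_N^\pi$ into the ordinary empirical process $\G_N$ and a weighted deviation process $\mathbb{W}_N$. It then suffices to verify the defining bound (\ref{def:local_asymp_moduli_1}) with $\G_N^\pi$ replaced by $\G_N$ and by $\mathbb{W}_N$ separately. I would fix the localization window as $\delta_N\equiv 1/2$ and $r_N\equiv N^{-\kappa}$ for a small $\kappa=\kappa(\alpha)>0$; this is non-increasing with $\sqrt N\,\delta_N$ non-decreasing, and the role of the lower cutoff $r_N$ is precisely to keep under control the additive remainder terms in the local maximal inequalities below, which blow up as the localization radius shrinks (this is what forces $r_N$ to stay above a fixed power of $N$).

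For the ordinary process the claim that $\omega_\alpha(t)=t^{1-\alpha/2}$ is a local modulus is classical Gin\'e--Koltchinskii ratio-type analysis. Peeling $\mathcal{F}$ into shells $S_j\equiv\{f\in\mathcal{F}:2^{-(j+1)}<\sigma_P f\le 2^{-j}\}$, the localized maximal inequality under an entropy condition with exponent $\alpha\in(0,2)$ gives $\E\sup_{f\in S_j}\abs{\G_N(f)}\lesssim (2^{-j})^{1-\alpha/2}$, the exponent arising from $\int_0^\delta \epsilon^{-\alpha/2}\,\d{\epsilon}\asymp\delta^{1-\alpha/2}$; Talagrand's inequality then controls the fluctuation of $\sup_{f\in S_j}\abs{\G_N(f)}$ about its mean with sub-Gaussian tails of variance proxy $\asymp\sup_{f\in S_j}\sigma_P^2(f)\asymp 2^{-2j}$. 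Dividing by $\omega_\alpha(\sigma_P f)\asymp (2^{-j})^{1-\alpha/2}$ on $S_j$ and summing the tail bounds over the $\mathcal{O}(\log N)$ shells, the fast decay of Talagrand's tails in $j$ makes the union bound summable uniformly in $N$, yielding the $\mathcal{O}_{\mathbf{P}}(1)$ ratio bound for $\G_N$.

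The substantive step is $\mathbb{W}_N$. Here I would condition on $Z^{(N)}$, under which the multipliers $w_i\equiv \xi_i/\pi_i-1$ are, by the standing model assumption, independent of the data $Y^{(N)}$, have conditional mean zero, and are uniformly bounded by (A1) since $0\le \xi_i/\pi_i\le 1/\pi_0$. One cannot simply symmetrize in the $Y$-coordinate and apply a contraction principle, because conditionally on the design the $Y_i$ follow $P(\cdot\mid Z_i)$ rather than $P$ (so the data are not correctly centered), and the design induces an essentially arbitrary dependence among the $w_i$ that precludes any generic concentration inequality for $\mathbb{W}_N$ itself. This is exactly the situation addressed by the sharp multiplier inequality of \cite{han2017sharp}: it bounds the localized (ratio-reweighted) multiplier process $\mathbb{W}_N$ by a weighted aggregate of the symmetrized empirical processes $\E\|\G_k\|$ over sample sizes $k\le N$, with the aggregation governed solely by the partial-sum/order-statistic behaviour of $\{w_i\}$. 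Condition (A2-CLT), which supplies the $\sqrt N$-rate of $\sum_i w_i$, together with (A1), makes this multiplier factor $\mathcal{O}(1)$, so that the local modulus $\omega_\alpha$ already established for the ordinary process transfers to $\mathbb{W}_N$ with no degradation of the exponent.

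The hard part is precisely this transfer, and it has two delicate components. First, the localization is imposed through the population quantity $Pf^2$, whereas conditionally on $Z^{(N)}$ the relevant second moments are $P(f^2\mid Z_i)$; one must argue, via a law of large numbers over the i.i.d.\ $Z_i$ and uniform boundedness of $\mathcal{F}$, that the conditional localized empirical moduli reproduce the population modulus $\omega_\alpha$ on average. Second, one must check that feeding the ratio-reweighted class $\{f/\omega_\alpha(\sigma_P f)\}$ through the multiplier inequality preserves the $\mathcal{O}_{\mathbf{P}}(1)$ conclusion: the point of the whole argument is that the delicate concentration work (Talagrand plus peeling) is performed \emph{once}, at the level of the ordinary empirical process, and the multiplier inequality---whose strength is exactly its indifference to the dependence structure among the $w_i$---imports that conclusion to $\mathbb{W}_N$. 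Recombining the two pieces and taking expectations back over $Z^{(N)}$ yields (\ref{def:local_asymp_moduli_1}) for $\G_N^\pi$ with $\phi=\omega_\alpha$.
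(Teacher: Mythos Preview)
Your approach shares the essential ingredients with the paper's---peeling, Talagrand concentration, and the multiplier inequality of \cite{han2017sharp}---but the decomposition is different, the role of (A2-CLT) is misplaced, and one step has a genuine gap.

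The paper does \emph{not} split $\G_N^\pi=\G_N+\mathbb{W}_N$. Instead it writes
\[
\G_N^\pi(f)=\tilde{\G}_N^\pi(f)+Pf\cdot \frac{1}{\sqrt{N}}\sum_{i=1}^N\Big(\frac{\xi_i}{\pi_i}-1\Big),\qquad \tilde{\G}_N^\pi(f)=\frac{1}{\sqrt{N}}\sum_{i=1}^N\frac{\xi_i}{\pi_i}\big(f(Y_i)-Pf\big).
\]
The scalar residual is dispatched by (A2-CLT); this is the \emph{only} place (A2-CLT) enters. The multiplier inequality (Proposition~\ref{prop:multiplier_ineq}) uses only (A1): the bound $(C/\pi_0)^p$ comes from Abel summation and the fact that $0\le\xi_i/\pi_i\le 1/\pi_0$, and is entirely insensitive to the partial-sum behaviour of the weights. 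Your claim that (A2-CLT) ``makes this multiplier factor $\mathcal{O}(1)$'' is therefore incorrect.

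For $\tilde{\G}_N^\pi$ the paper proves a Talagrand-type concentration inequality \emph{directly} (Proposition~\ref{prop:Talagrand_HT}): the multiplier inequality in $p$-th moment form gives $\E\|\tilde{\G}_N^\pi\|_{\mathcal{G}}^p\le (C/\pi_0)^p\,\E\|\G_N\|_{\mathcal{G}}^p$ for every $p\ge 1$; Talagrand's moment inequality for the ordinary process supplies the right-hand side; and a moment-to-tail conversion (Lemma~\ref{lem:moment_tail}) yields exponential concentration for $\tilde{\G}_N^\pi$ with the correct variance proxy $\sup_{\mathcal{G}}\sigma_P^2 f$. A single peeling (Proposition~\ref{prop:deviation_normalized_process}) with $r_N=N^{-1/(\alpha+2)}$, $\delta_N=\mathfrak{o}((\log N)^{-1/\alpha})$ and $s_j\equiv 3\log N$ then finishes. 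There is no separate treatment of $\G_N$, and crucially no conditioning on $Z^{(N)}$ anywhere: your worry about the conditional laws $P(\cdot\mid Z_i)$ is a red herring, since the multiplier inequality is applied unconditionally.

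The genuine gap in your proposal is the transfer of \emph{concentration} to $\mathbb{W}_N$. The multiplier inequality is a moment bound, not a tail bound. Peeling into $\mathcal{O}(\log N)$ shells and taking a union bound requires exponential concentration on each shell; merely controlling first moments shell-by-shell and summing would yield only $\mathcal{O}_{\mathbf{P}}(\log N)$, not $\mathcal{O}_{\mathbf{P}}(1)$. Your sketch asserts that the multiplier inequality ``imports that conclusion'' from $\G_N$ to $\mathbb{W}_N$, but does not explain the mechanism. The paper's device---apply the multiplier inequality for \emph{every} $p\ge 1$, combine with Talagrand's $p$-th-moment bound for $\G_N$, then convert back to exponential tails---is exactly what closes this gap, and once you carry it out you will find there is no reason to split off $\G_N$: the same argument controls $\tilde{\G}_N^\pi$ (which already contains $\G_N$) in one stroke.
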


The local asymptotic modulus is a key step in understanding the behavior of the Horvitz-Thompson empirical process at a local level. This will be useful in applications in the next section. The local asymptotic modulus $\omega_\alpha$ cannot be improved in general; this can be shown for the usual empirical process indexed by \emph{$\alpha$-full class} (which essentially requires a lower bound for the entropy number in a more local sense, cf. \cite{gine2006concentration}).

One may also invert the above viewpoint by fixing one particular weight function $\phi$ and asking for the rate of convergence of the corresponding weighted Horvitz-Thompson empirical process. Below are two particular choices: the first one (\ref{ineq:ratio_HT_rate_1}) uses $\phi(x)=x$, and the second one (\ref{ineq:ratio_HT_rate_2}) uses (essentially) $\phi(x)=x^2$.

\begin{theorem}\label{thm:ratio_HT_rate}
	Suppose that (A1) and (A2-CLT) hold and $\mathcal{F}$ is a uniformly bounded class satisfying an entropy condition with exponent $\alpha \in (0,2)$. Let $r_N\gtrsim N^{-1/(\alpha+2)}$. Then
	\begin{align}\label{ineq:ratio_HT_rate_1}
	 N^{1/(\alpha+2)} \sup_{f \in \mathcal{F}: \sigma_P f\geq r_N} \frac{\abs{(\Prob_N^\pi -P)(f)}}{\sigma_P f}=\mathcal{O}_{\mathbf{P}}(1).
	\end{align}
	If furthermore $\mathcal{F}$ takes value in $[0,1]$, then for any $L_N \to \infty$,
	\begin{align}\label{ineq:ratio_HT_rate_2}
	\sup_{f \in \mathcal{F}: Pf\geq L_N \cdot r_N} \biggabs{\frac{\Prob_N^\pi f}{Pf}-1}= \mathfrak{o}_{\mathbf{P}}(1).
	\end{align}
\end{theorem}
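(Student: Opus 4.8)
The plan is to derive both displays from a single scale-uniform maximal inequality for $\G_N^\pi$ combined with a dyadic peeling argument, treating \eqref{ineq:ratio_HT_rate_1} as the master estimate and deducing \eqref{ineq:ratio_HT_rate_2} from it. The engine is the bound behind Theorem \ref{thm:local_asymp_moduli}: its proof (symmetrization together with the multiplier inequality of \cite{han2017sharp}, whose multiplier factor is controlled by (A2-CLT), followed by a Dudley-type entropy integral) yields an estimate of the form $\E \sup_{f \in \mathcal F,\, \|f\|_{L_2(P)} \le \delta} \abs{\G_N^\pi(f)} \lesssim \delta^{1-\alpha/2}$, valid uniformly over $\delta \gtrsim N^{-1/(\alpha+2)}$. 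The role of the hypothesis $r_N \gtrsim N^{-1/(\alpha+2)}$ is precisely to keep the remainder term in this maximal inequality bounded, and I would isolate $N^{-1/(\alpha+2)}$ as the scale below which the estimate degrades.

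For \eqref{ineq:ratio_HT_rate_1} I would first split $(\Prob_N^\pi - P)(f) = (\Prob_N^\pi - P)(f - Pf) + Pf\,(\Prob_N^\pi - P)(1)$. The centered piece is handled by peeling: partition $\{f : \sigma_P f \ge r_N\}$ into the dyadic shells $\{2^{j-1} r_N < \sigma_P f \le 2^j r_N\}$, $j = 1,\dots,J$ with $J \lesssim \log N$ (uniform boundedness of $\mathcal F$ providing the upper cut-off), apply the maximal inequality at scale $\delta = 2^j r_N$ on each shell, divide by the lower bound $\sigma_P f > 2^{j-1} r_N$, and sum. The key point is that the shell-wise contributions carry a factor $(2^j r_N)^{-\alpha/2}$, so the sum over $j$ is a convergent geometric series dominated by its smallest-scale term $r_N^{-\alpha/2}$; multiplying by the normalization $N^{1/(\alpha+2)-1/2}$ and using $r_N \gtrsim N^{-1/(\alpha+2)}$ gives a bound of order $N^{1/(\alpha+2)-1/2} r_N^{-\alpha/2} \lesssim 1$, whence $\mathcal O_{\mathbf P}(1)$ by Markov. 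The remaining mean term contributes $Pf\,(\Prob_N^\pi - P)(1)$; since $\abs{(\Prob_N^\pi - P)(1)} = \mathcal O_{\mathbf P}(N^{-1/2})$ by (A2-CLT) and $\abs{Pf} \le \|f\|_{L_2(P)}$, this is of lower order once measured against $\|f\|_{L_2(P)}$, and this is the only place where the non-self-normalizing nature of $\Prob_N^\pi$ (unlike the classical $\Prob_N$, for which $(\Prob_N - P)(1) \equiv 0$) enters.

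For \eqref{ineq:ratio_HT_rate_2} I would write $\Prob_N^\pi f / Pf - 1 = (\Prob_N^\pi - P)(f)/Pf$ and exploit the elementary inequality $\sigma_P f \le \sqrt{Pf}$ valid for $[0,1]$-valued $f$. On the regime $\sigma_P f \ge r_N$, \eqref{ineq:ratio_HT_rate_1} already gives $\abs{(\Prob_N^\pi - P)(f)}/\sigma_P f = \mathcal O_{\mathbf P}(N^{-1/(\alpha+2)})$, and multiplying by $\sigma_P f / Pf \le (Pf)^{-1/2} \le (L_N r_N)^{-1/2}$ produces a factor tending to $0$ because $L_N \to \infty$. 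On the complementary low-variance regime $\sigma_P f < r_N$ I would apply the maximal inequality directly at scale $\delta = r_N$ to bound $\abs{(\Prob_N^\pi - P)(f - Pf)}$, divide by $Pf \ge L_N r_N$, and again gain a vanishing factor $1/L_N$; the pure-mean contribution reduces to $(\Prob_N^\pi - P)(1) = \mathfrak o_{\mathbf P}(1)$ by (A2-CLT). Assembling the two regimes yields \eqref{ineq:ratio_HT_rate_2}.

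The step I expect to be the main obstacle is making the maximal inequality genuinely uniform across all $O(\log N)$ shells while retaining the sharp exponent $1/(\alpha+2)$: for the dependent Horvitz-Thompson weights one cannot invoke Talagrand's concentration inequality shell-by-shell, so the argument must instead rest on the convergence of the geometric series of expectations, which forces one to track the remainder term in the maximal inequality carefully and is exactly why the boundary scale $N^{-1/(\alpha+2)}$ surfaces. A secondary but delicate point is the interplay between $\sigma_P f$ and $Pf$ in passing from \eqref{ineq:ratio_HT_rate_1} to \eqref{ineq:ratio_HT_rate_2}, where the near-constant functions (small variance but non-negligible mean) must be isolated and absorbed using the slack supplied by $L_N \to \infty$ rather than by the base rate $N^{-1/(\alpha+2)}$.
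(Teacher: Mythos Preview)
Your approach is correct and will reach both conclusions, but it differs from the paper's in two notable ways.

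First, your belief that ``for the dependent Horvitz-Thompson weights one cannot invoke Talagrand's concentration inequality shell-by-shell'' is mistaken. The paper does exactly this: Proposition~\ref{prop:Talagrand_HT} is a one-sided Talagrand-type inequality for $\tilde{\G}_N^\pi$, obtained by applying the multiplier inequality (Proposition~\ref{prop:multiplier_ineq}) at every moment order $p$, invoking the classical Talagrand moment bound for $\G_N$, and converting back to exponential form. The paper then feeds this into the generic peeling device of Proposition~\ref{prop:deviation_normalized_process} with the weight $\phi(x)=x$, choosing $s_j = s + 2\log j$ so that the union bound over shells is summable, and takes $s = 3\log N$ to conclude \eqref{ineq:ratio_HT_rate_1}. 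Your alternative---summing the shell-wise expectations as a geometric series and applying Markov once---is valid and arguably more elementary, but it only yields $\mathcal{O}_{\mathbf P}(1)$, whereas the paper's route produces exponential tail bounds as a byproduct (these are exploited later, e.g.\ in the proof of Theorem~\ref{thm:M_estimation}).

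Second, for \eqref{ineq:ratio_HT_rate_2} the paper does not reduce to \eqref{ineq:ratio_HT_rate_1} plus a low-variance regime as you propose. Instead it reruns Proposition~\ref{prop:deviation_normalized_process} from scratch with $\phi(x)=x^2$ and with $\sqrt{Pf}$ in place of $\sigma_P f$ (using $Pf^2 \le Pf$), peeling on the scale of $Pf$ directly. Your decomposition into $\{\sigma_P f \ge r_N\}$ and $\{\sigma_P f < r_N,\, Pf \ge L_N r_N\}$ works too---on the first region \eqref{ineq:ratio_HT_rate_1} and $\sigma_P f/Pf \le (Pf)^{-1/2}$ suffice, and on the second the centered maximal inequality at scale $r_N$ divided by $L_N r_N$ gives $o_{\mathbf P}(1)$---but the paper's route is more uniform and avoids the case split.
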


Results analogous to (\ref{ineq:ratio_HT_rate_1})-(\ref{ineq:ratio_HT_rate_2}) have been derived in the case of i.i.d. sampling in \cite{mason1983strong,shorack1982limit,stute1982oscillation,stute1984oscillation,wellner1978limit} for uniform empirical processes on (subsets of) $\R$ (or $\R^d$), and are further investigated in \cite{alexander1987rates} for VC classes of sets, and extended by \cite{gine2006concentration,gine2003ratio} who studied more general VC-subgraph classes. 

Note that (\ref{ineq:ratio_HT_rate_2}) can be viewed as a uniform law of large numbers for the weighted Horvitz-Thompson empirical process. We can also establish a central limit theorem for the weighted Horvitz-Thompson empirical process, analogous to the development in \cite{alexander1985rates,alexander1987central,alexander1987rates,gine2006concentration} for the usual empirical process. 

\begin{theorem}\label{thm:weighted_CLT}
	Suppose that (A1) and (A2-CLT) hold, and that $\mathcal{F}$ is a uniformly bounded class satisfying an entropy condition with exponent $\alpha \in (0,2)$. Let $\phi: \R_{\geq 0}\to \R_{\geq 0}$ be such that $\phi(0)=0$ and that 
	\begin{align}\label{cond:weight_CLT}
	\frac{\phi(t)}{t^{1-\frac{\alpha}{2}}(\log \log (1/t))^{1/2}}\to \infty
	\end{align}
	as $t \to 0$. If $r_N\gtrsim N^{-1/(\alpha+2)}$ and $\G_N^{\pi}$ converges finite-dimensionally to a tight Gaussian process $\G^{\pi}$, then
	\begin{align*}
	\frac{\G_N^{\pi}(f)}{\phi(\sigma_P f)}\bm{1}_{\sigma_P f>r_N}\rightsquigarrow \frac{\G^{\pi}(f)}{\phi(\sigma_P f)}\quad \textrm{ in }\ell^\infty(\mathcal{F}).
	\end{align*}
\end{theorem}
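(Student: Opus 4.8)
The plan is to verify the two properties that together are equivalent to weak convergence in $\ell^\infty(\mathcal{F})$ for the weighted process $W_N(f)\equiv \G_N^\pi(f)/\phi(\sigma_P f)\cdot\bm{1}_{\sigma_P f>r_N}$: finite-dimensional convergence to $W(f)\equiv \G^\pi(f)/\phi(\sigma_P f)$, and asymptotic tightness. Throughout I assume the natural regularity that $\phi$ is continuous and strictly positive on $(0,\infty)$ (with the convention $0/0=0$ at $\sigma_P f=0$), so that all quotients below are well defined. Finite-dimensional convergence is immediate: for fixed $f$ with $\sigma_P f>0$ one has $r_N\to 0$, so the indicator equals $1$ for $N$ large, and the assumed finite-dimensional convergence of $\G_N^\pi$ to $\G^\pi$, combined with division by the fixed positive constants $\phi(\sigma_P f_j)$, yields joint convergence $(W_N(f_j))_j\to (W(f_j))_j$.

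For tightness I would split $\mathcal{F}$ according to the size of $\sigma_P f$. Fix a small threshold $\tau>0$. On the bulk $\mathcal{F}_{\geq\tau}\equiv\{f:\sigma_P f\geq\tau\}$ the weight is bounded below by $\phi(\tau)>0$ and above by $\sup_{s\leq M}\phi(s)<\infty$, where $M\equiv\sup_{f\in\mathcal{F}}\sigma_P f<\infty$ by uniform boundedness; hence the map $(T_\tau g)(f)\equiv g(f)/\phi(\sigma_P f)$ is bounded Lipschitz on $\ell^\infty(\mathcal{F}_{\geq\tau})$. Since the entropy hypothesis forces $\mathcal{F}$ to be $P$-Donsker (so (D2) holds) and (D1) is exactly the assumed finite-dimensional convergence, Theorem~\ref{thm:weak_convergence_HT} gives $\G_N^\pi\rightsquigarrow\G^\pi$ in $\ell^\infty(\mathcal{F})$; composing with $T_\tau$ yields $W_N\rightsquigarrow W$ in $\ell^\infty(\mathcal{F}_{\geq\tau})$ for every fixed $\tau$. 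It then remains to show that the tail $\{0<\sigma_P f<\tau\}$ is uniformly negligible, i.e. for every $\epsilon>0$,
\begin{align*}
\lim_{\tau\to 0}\limsup_{N\to\infty}\Prob^{\ast}\Big(\sup_{r_N<\sigma_P f<\tau}\frac{\abs{\G_N^\pi(f)}}{\phi(\sigma_P f)}>\epsilon\Big)=0,\\
\lim_{\tau\to 0}\Prob\Big(\sup_{0<\sigma_P f<\tau}\frac{\abs{\G^\pi(f)}}{\phi(\sigma_P f)}>\epsilon\Big)=0.
\end{align*}
The first gives asymptotic tightness of the prelimit near $0$, the second certifies a tight Borel law for the candidate limit, and a standard exhaustion argument stitches the bulk and tail estimates together into $W_N\rightsquigarrow W$ on all of $\mathcal{F}$.

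The heart of the argument, and the step I expect to be hardest, is the prelimit tail bound, which I would handle by a peeling argument over dyadic shells $\mathcal{F}_j\equiv\{f:2^{-(j+1)}<\sigma_P f\leq 2^{-j}\}$ for $j_0\leq j\leq J_N$, with $2^{-j_0}\asymp\tau$ and $2^{-J_N}\asymp r_N$ (so there are $\asymp\log N$ shells, as $r_N\gtrsim N^{-1/(\alpha+2)}$). On $\mathcal{F}_j$ the weight obeys $\phi(\sigma_P f)\geq\phi(2^{-(j+1)})$, so it suffices to bound $\sum_{j}\Prob^{\ast}\big(\sup_{f\in\mathcal{F}_j}\abs{\G_N^\pi(f)}>\epsilon\,\phi(2^{-(j+1)})\big)$. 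For this I would invoke the mean-scale sub-Gaussian maximal inequality underlying Theorem~\ref{thm:local_asymp_moduli}, of the form $\Prob^{\ast}\big(\sup_{f\in\mathcal{F}_j}\abs{\G_N^\pi(f)}>x\,\omega_\alpha(2^{-j})\big)\lesssim \exp(-c\,x^2)$, obtained from the multiplier inequality of \cite{han2017sharp} together with entropy chaining uniformly under (A1) and (A2-CLT), and valid in the sub-Gaussian range down to the critical radius $r_N\asymp N^{-1/(\alpha+2)}$ (this is precisely the role of that rate condition). Choosing $x=x_j\asymp \epsilon\,\phi(2^{-(j+1)})/\omega_\alpha(2^{-j})$ and using $\omega_\alpha(2^{-j})\asymp\omega_\alpha(2^{-(j+1)})$, condition (\ref{cond:weight_CLT}) reads exactly as $x_j/(\log\log(2^{j+1}))^{1/2}\to\infty$; since $\log\log(2^{j+1})\asymp\log j$, each tail is $o(j^{-M})$ for every fixed $M$, so the sum over the $\asymp\log N$ shells converges and tends to $0$ as $\tau\to 0$ (equivalently $j_0\to\infty$). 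This is exactly where the double-logarithm in (\ref{cond:weight_CLT}), rather than the weaker $\phi/\omega_\alpha\to\infty$, is consumed: the coarser mean-scale sub-Gaussian tail available for the dependent Horvitz–Thompson process matches $\log\log$, and the innermost shells are in any case covered qualitatively by Theorem~\ref{thm:local_asymp_moduli} since (\ref{cond:weight_CLT}) forces $\omega_\alpha(t)/\phi(t)\to 0$.

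Finally, the limit-side tail estimate is the Gaussian analogue: Dudley's entropy bound $\E\sup_{\sigma_P f\leq\delta}\abs{\G^\pi(f)}\lesssim\omega_\alpha(\delta)$ together with Borell–TIS concentration and the same dyadic peeling, driven by (\ref{cond:weight_CLT}), yields $\sup_{0<\sigma_P f<\tau}\abs{\G^\pi(f)}/\phi(\sigma_P f)\to 0$ in probability as $\tau\to 0$, which also shows that $W$ has a version with bounded, uniformly continuous paths and hence a tight Borel law on $\ell^\infty(\mathcal{F})$. Combining the bulk convergence, the two tail estimates, and the finite-dimensional convergence completes the proof. The main obstacle is squarely the intermediate-shell maximal inequality for $\G_N^\pi$ with a tail sharp enough to survive the union bound over the growing number of shells; everything else is bookkeeping around Theorems~\ref{thm:weak_convergence_HT} and~\ref{thm:local_asymp_moduli} and the elementary consequence $\omega_\alpha/\phi\to 0$ of (\ref{cond:weight_CLT}).
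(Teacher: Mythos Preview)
Your proposal is correct and follows essentially the same route as the paper: dyadic peeling over $\{r_N<\sigma_P f\le\delta\}$ combined with a Talagrand-type concentration inequality for the Horvitz--Thompson process, with the levels chosen so that condition~(\ref{cond:weight_CLT}) makes the union bound summable. A few remarks on how the paper streamlines what you wrote.

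First, the bulk/tail split and the separate verification that the limit $W$ is tight are unnecessary. The paper simply proves asymptotic equicontinuity of $W_N$ directly: for any $\delta_N\to 0$, $\Prob\big(\sup_{r_N<\sigma_P f\le\delta_N}|\G_N^\pi(f)|/\phi(\sigma_P f)>\epsilon\big)\to 0$. Together with finite-dimensional convergence this already yields weak convergence to a tight limit; there is no need to invoke Theorem~\ref{thm:weak_convergence_HT} on a bulk region or to run a separate Borell--TIS argument for $\G^\pi$.

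Second, the concentration input is packaged in the paper as Proposition~\ref{prop:deviation_normalized_process}, which applies the Talagrand-type bound (Proposition~\ref{prop:Talagrand_HT}) to the \emph{centered} piece $\tilde\G_N^\pi$ and then handles the remainder $Pf\cdot N^{-1/2}\sum(\xi_i/\pi_i-1)$ via (A2-CLT) and $|Pf|\le\delta_N$. Your statement of a ``mean-scale'' sub-Gaussian tail $\exp(-cx^2)$ at scale $\omega_\alpha(2^{-j})$ is a valid (weaker) consequence of this, but note the actual sub-Gaussian scale in Proposition~\ref{prop:Talagrand_HT} is $\sigma_j=2^{-j}$, and there is also a Bernstein term $s/\sqrt N$. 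The paper handles both at once by choosing $s_j=3\log\log(1/(r_N2^j))$ and checking $\tau_{N,2}\lesssim \epsilon\big(\delta_N^{\alpha/2}+\sqrt{\log\log N}/(\sqrt N r_N^{1-\alpha/2})\big)=\mathfrak o(\epsilon)$; the second term is exactly where $r_N\gtrsim N^{-1/(\alpha+2)}$ is used, as you correctly anticipated. The probability estimate then becomes $\sum_j(\log(1/(r_N2^j)))^{-3}$, which is compared to an integral and shown to vanish.
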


The weight function in the above theorem is required to be only slightly stronger than the local asymptotic modulus by an iterated logarithmic factor. This is very natural: the weight function cannot beat the local asymptotic modulus for a weighted CLT to hold, so the condition (\ref{cond:weight_CLT}) is optimal up to an iterated logarithmic factor.

\subsection{Calibration}

In practice, since the Horvitz-Thompson estimator may be severely inefficient, calibration of the weights is often used to improve efficiency \cite{deville1992calibration,lumley2011connections}. The main purpose of this section, instead of proposing new calibration methods or addressing efficiency issues, rests in demonstrating that our theoretical results are still valid for the Horvitz-Thompson empirical process with calibrated weights. 

To illustrate this, we consider one popular calibration method that aims at matching the population mean for the Horvitz-Thompson estimator \cite{deville1992calibration}. Let $\mathcal{Z}\subset\R^d$ be a compact set, and $G:\R \to \R_{\geq 0}$. Let $\hat{\alpha}_N \in \mathcal{A}_c$, where $\mathcal{A}_c$ is a compact set of $\R^d$, be defined via
\begin{align*}
\frac{1}{N}\sum_{i=1}^N \frac{\xi_i G(Z_i^\top \hat{\alpha}_N)}{\pi_i }Z_i = \frac{1}{N}\sum_{i=1}^N Z_i.
\end{align*}
Then the \emph{calibrated Horvitz-Thompson empirical measure} and \emph{calibrated Horvitz-Thompson empirical process} are defined by
\begin{align*}
\Prob_N^{\pi,c} (f) \equiv \frac{1}{N}\sum_{i=1}^N \frac{\xi_i G(Z_i^\top \hat{\alpha}_N)}{\pi_i} f(Y_i),\quad f \in \mathcal{F},
\end{align*}
and
\begin{align*}
\G_N^{\pi,c} (f)\equiv \sqrt{N} \big(\Prob_N^{\pi,c}-P\big)(f),\quad f \in \mathcal{F}
\end{align*}
respectively. 

Our next theorem asserts that as long as $\hat{\alpha}_N$ converges to the `truth' $0$ (which can be defined to be another value, but we use $0$ for notational convenience) sufficiently fast, the global and local theorems also hold for the calibrated Horvitz-Thompson empirical process.
\begin{theorem}\label{thm:calibrate_HT}
Suppose $G(0)=1$, $G'(0)>0$. Let $\mathcal{F}$ be a class of measurable functions with a measurable envelope $F$. 
\begin{enumerate}
\item[(1)] Let the assumptions in Theorem \ref{thm:ULLN_HT} hold with $PF<\infty$. If $\hat{\alpha}_N =\mathfrak{o}_{\mathbf{P}}(1)$, then the conclusion of Theorem \ref{thm:ULLN_HT} holds with $\Prob_N^\pi$ replaced by $\Prob_N^{\pi,c}$.
\item[(2)] Let the assumptions in Theorem \ref{thm:weak_convergence_HT} hold with $PF^2<\infty$ (but the finite-dimensional convergence condition is replaced by ${\G}_N^{\pi,c}$ converges finite-dimensionally to some tight Gaussian process ${\G}^{\pi,c}$). If $\sqrt{N}\hat{\alpha}_N=\mathcal{O}_{\mathbf{P}}(1)$, then
\begin{align*}
\G_N^{\pi,c}\rightsquigarrow {\G}^{\pi,c}\textrm{ in }\ell^\infty(\mathcal{F}).
\end{align*}
\item[(3)] If $\sqrt{N}\hat{\alpha}_N=\mathcal{O}_{\mathbf{P}}(1)$, then under the same conditions as in Theorems \ref{thm:local_asymp_moduli}, \ref{thm:ratio_HT_rate} and \ref{thm:weighted_CLT} (but the finite-dimensional convergence condition is replaced by ${\G}_N^{\pi,c}$ converges finite-dimensionally to some tight Gaussian process ${\G}^{\pi,c}$), the respective conclusions hold for the calibrated Horvitz-Thompson empirical process.
\end{enumerate}
\end{theorem}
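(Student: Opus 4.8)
The common thread in all three parts is the additive decomposition, valid because $G(0)=1$,
\[
\Prob_N^{\pi,c}(f)=\Prob_N^{\pi}(f)+R_N(f),\qquad R_N(f)\equiv\frac{1}{N}\sum_{i=1}^N\frac{\xi_i\big(G(Z_i^\top\hat{\alpha}_N)-1\big)}{\pi_i}f(Y_i),
\]
together with the uniform smallness estimate $M_N\equiv\sup_{z\in\mathcal{Z}}\abs{G(z^\top\hat{\alpha}_N)-1}$. Since $\mathcal{Z}$ is compact and $G$ is differentiable at $0$ with $G(0)=1$, on the event that $\hat{\alpha}_N$ is small one has $M_N\leq(\abs{G'(0)}+\mathfrak{o}_{\mathbf{P}}(1))\big(\sup_{z\in\mathcal{Z}}\pnorm{z}{}\big)\pnorm{\hat{\alpha}_N}{}$; hence $M_N=\mathfrak{o}_{\mathbf{P}}(1)$ under $\hat{\alpha}_N=\mathfrak{o}_{\mathbf{P}}(1)$, and $\sqrt{N}M_N=\mathcal{O}_{\mathbf{P}}(1)$ under $\sqrt{N}\hat{\alpha}_N=\mathcal{O}_{\mathbf{P}}(1)$. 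The entire burden of the proof is then to show that $R_N$ (or $\sqrt{N}R_N$) is negligible at the scale relevant to each conclusion.

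For part (1), using the envelope $\abs{f}\leq F$ I would bound $\sup_{f\in\mathcal{F}}\abs{R_N(f)}\leq M_N\cdot\Prob_N^\pi F$; since $\E\,\Prob_N^\pi F=PF<\infty$ forces $\Prob_N^\pi F=\mathcal{O}_{\mathbf{P}}(1)$, the remainder is $\mathfrak{o}_{\mathbf{P}}(1)$, and combining with Theorem \ref{thm:ULLN_HT} for $\Prob_N^\pi$ yields the claim. For part (2), finite-dimensional convergence of $\G_N^{\pi,c}$ to $\G^{\pi,c}$ is assumed, so it remains to prove asymptotic equicontinuity of $\G_N^{\pi,c}=\G_N^{\pi}+\sqrt{N}R_N$. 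The first summand is asymptotically equicontinuous by the argument underlying Theorem \ref{thm:weak_convergence_HT}, which uses only (A1), (A2-CLT), and that $\mathcal{F}$ is $P$-Donsker, \emph{not} the finite-dimensional hypothesis (D1). For the second, the crude bound
\[
\sup_{\pnorm{f-g}{L_2(P)}\leq\delta_N}\bigabs{\sqrt{N}R_N(f)-\sqrt{N}R_N(g)}\leq\sqrt{N}M_N\cdot\sup_{\pnorm{f-g}{L_2(P)}\leq\delta_N}\Prob_N^\pi\abs{f-g}
\]
reduces matters to showing the last supremum is $\mathfrak{o}_{\mathbf{P}}(1)$; this follows from $\Prob_N^\pi\abs{f-g}\leq P\abs{f-g}+\abs{(\Prob_N^\pi-P)\abs{f-g}}$, bounding the first term by $\delta_N$ and the second, uniformly, via Theorem \ref{thm:ULLN_HT} applied to the $P$-Glivenko--Cantelli class $\{\abs{f-g}:f,g\in\mathcal{F}\}$ (envelope $2F$, $P(2F)<\infty$). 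Since $\sqrt{N}M_N=\mathcal{O}_{\mathbf{P}}(1)$, the remainder is asymptotically equicontinuous, and summing the two pieces gives weak convergence.

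For part (3), I would again write $\G_N^{\pi,c}=\G_N^{\pi}+\sqrt{N}R_N$ and control the remainder at each local scale by first linearizing, $G(Z_i^\top\hat{\alpha}_N)-1=G'(0)Z_i^\top\hat{\alpha}_N+\rho_i$ with $\abs{\rho_i}\lesssim\pnorm{\hat{\alpha}_N}{}^2$, so that the leading part of $\sqrt{N}R_N(f)$ is $G'(0)(\sqrt{N}\hat{\alpha}_N)^\top\Prob_N^\pi(Zf)$. The key structural observation is that the $\R^d$-valued class $\{(y,z)\mapsto z_kf(y):f\in\mathcal{F},\,1\leq k\leq d\}$ inherits the entropy condition of $\mathcal{F}$, since multiplication by the bounded coordinate $z_k$ alters covering/bracketing numbers only by constants; hence Theorems \ref{thm:local_asymp_moduli}, \ref{thm:ratio_HT_rate} and \ref{thm:weighted_CLT} apply to it. Combined with $\sqrt{N}\hat{\alpha}_N=\mathcal{O}_{\mathbf{P}}(1)$, the bound $\abs{P(z_kf)}\lesssim\pnorm{f}{L_2(P)}$, and the negligibility of the quadratic term (which carries a factor $\sqrt{N}\pnorm{\hat{\alpha}_N}{}^2=\mathcal{O}_{\mathbf{P}}(N^{-1/2})$), each piece of $\sqrt{N}R_N$ is dominated by the corresponding modulus, ratio rate, or weight function, uniformly over the relevant shrinking subclass.

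The main obstacle is precisely this local analysis in part (3): unlike the global statements, here the supremum runs over a shrinking band $r_N^2<Pf^2\leq\delta_N^2$ and the normalization is by $\omega_\alpha(\sigma_P f)=(\sigma_P f)^{1-\frac{\alpha}{2}}$, respectively by $\phi(\sigma_P f)$, rather than by $\pnorm{f}{L_2(P)}$. One must therefore track the discrepancy between $\pnorm{f}{L_2(P)}$ and $\sigma_P f$ and confirm that, after applying the already-proven local theorems to the enlarged class $\{z_kf\}$ and dividing by $\omega_\alpha(\sigma_P f)$, the remainder is still of the correct order, bounded in (\ref{def:local_asymp_moduli_1}) and (\ref{ineq:ratio_HT_rate_1}) and vanishing for the weighted CLT of Theorem \ref{thm:weighted_CLT}. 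By contrast, the global parts (1) and (2) are comparatively routine once the decomposition and the estimate for $M_N$ are in hand.
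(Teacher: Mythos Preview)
Your approach to parts (1) and (2) is essentially the paper's: the same additive decomposition, the same uniform estimate $M_N$, and the same reduction of the remainder to an empirical average times $M_N$. The only cosmetic difference is that the paper bounds the remainder in (2) by $\sqrt{\Prob_N f^2}\cdot\sqrt{N}M_N$ and invokes the fact that $\mathcal{F}^2$ is $P$-Glivenko--Cantelli (from $PF^2<\infty$ and the characterization in \cite{gine2015mathematical}), whereas you bound by $\Prob_N^\pi|f-g|$ and invoke the Glivenko--Cantelli property of $\{|f-g|\}$; both work.

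For part (3), however, you take a considerably more elaborate route than the paper. The paper does \emph{not} linearize $G$, does \emph{not} enlarge the class to $\{z_kf\}$, and does \emph{not} re-apply the local theorems to the enlarged class. It simply reuses the crude bound from part (2): since in Theorems \ref{thm:local_asymp_moduli}--\ref{thm:weighted_CLT} the class $\mathcal{F}$ is uniformly bounded, one has $\abs{\sqrt{N}R_N(f)}\lesssim\sqrt{\Prob_N f^2}\cdot\sqrt{N}M_N$, and the Glivenko--Cantelli property of $\mathcal{F}^2$ gives $\sqrt{\Prob_N f^2}$ uniformly close to $\sqrt{Pf^2}$. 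Dividing by the relevant modulus $\omega_\alpha(\sigma_P f)$ or weight $\phi(\sigma_P f)$ and using $\sqrt{N}M_N=\mathcal{O}_{\mathbf{P}}(1)$ is then enough, because the remainder carries a factor $\sqrt{Pf^2}$ rather than merely $(\sigma_P f)^{1-\alpha/2}$, and so is of smaller order on the shrinking band. Your linearization strategy would also work, but it introduces machinery (entropy of $\{z_kf\}$, control of the quadratic term, matching $\sigma_P(z_kf)$ to $\sigma_P f$) that the paper avoids entirely; the ``main obstacle'' you identify is handled by the paper with the same one-line bound used in (2).
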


The structural commonality in the above theorem is characterized by the $\sqrt{N}$-rate of the estimate $\hat{\alpha}_N$. Establishing a $\sqrt{N}$-rate for $\hat{\alpha}_N$ is not hard: in fact we can use Theorem 3.3.1 of \cite{van1996weak} for such a purpose by verifying the asymptotic equi-continuity of the Horvitz-Thompson empirical process. 

Below we exploit one possible strategy for this via the method of Proposition \ref{prop:cov_G_pi}. For simplicity of exposition, we assume that $\pi_i\equiv \pi_i(Z_i)$.

\begin{proposition}\label{prop:cov_structure_cal}
Assume the conditions of Proposition \ref{prop:cov_G_pi} and Theorem \ref{thm:calibrate_HT} hold. Further assume that $G$ is continuous with its derivative $G'$ locally continuous at $0$, and the map $\alpha \mapsto P[G(Z^\top \alpha-1) Z]$ has a unique zero at $0$, and $P(ZZ^\top) \in \R^{d\times d}$ is invertible. Then
\begin{align}\label{ineq:cov_structure_cal_0}
\sqrt{N}\hat{\alpha}_N=-(G'(0))^{-1} (P(ZZ^\top))^{-1} (\G_N^\pi-\G_N)Z+\mathfrak{o}_{\mathbf{P}}(1).
\end{align}
Furthermore, $\G_N^{\pi,c}$ converges finite-dimensionally to a tight Gaussian process $\G^{\pi,c}$ whose covariance structure is given by the following: for any $f,g \in \mathcal{F}$,
\begin{align*}
&\mathrm{Cov} \big( \G^{\pi,c}(f), \G^{\pi,c}(g)\big)\\
 & \quad\quad= P(fg)-(Pf)(Pg)+\mu_{\pi 1}P\big(\mathcal{T}(f)\mathcal{T}(g)\big)+\mu_{\pi2}(P\mathcal{T}(f))(P\mathcal{T}(g)).
\end{align*}
Here the operator $\mathcal{T}: \R^{\mathcal{Y}\times \mathcal{Z}}\to \R^{\mathcal{Y}\times \mathcal{Z}}$ is defined by 
\begin{align*}
\mathcal{T}(f)(y,z)=f(y)-P(f(Y)Z^\top) (P(ZZ^\top))^{-1} z.
\end{align*}
\end{proposition}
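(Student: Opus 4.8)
The plan is to rewrite the calibration constraint as an estimating equation for $\hat{\alpha}_N$, linearize it to prove \eqref{ineq:cov_structure_cal_0} by a $Z$-estimation argument, and then propagate this linear expansion through $\G_N^{\pi,c}$, reading off the covariance from the super-population/design decomposition underlying Proposition \ref{prop:cov_G_pi}. Introduce the $\R^d$-valued maps
\[
\Psi_N(\alpha)\equiv \Prob_N^\pi\big[G(Z^\top\alpha)Z\big]-\Prob_N[Z],\qquad \Psi(\alpha)\equiv P\big[(G(Z^\top\alpha)-1)Z\big],
\]
so that $\hat{\alpha}_N$ solves $\Psi_N(\hat{\alpha}_N)=0$ and, since $G(0)=1$, $\Psi(0)=0$ with $\dot\Psi(0)=G'(0)P(ZZ^\top)$ nonsingular. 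The class $\mathcal{G}\equiv\{z\mapsto G(z^\top\alpha)z^{(j)}:\alpha\in\mathcal{A}_c,\,1\le j\le d\}$ is uniformly bounded (by compactness of $\mathcal{Z}$ and $\mathcal{A}_c$) and Lipschitz in the finite-dimensional parameter $\alpha$, hence both $P$-Glivenko--Cantelli and $P$-Donsker.

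First I would establish consistency and the linearization \eqref{ineq:cov_structure_cal_0}. Uniform convergence $\sup_{\alpha\in\mathcal{A}_c}\lVert \Psi_N(\alpha)-\Psi(\alpha)\rVert =\mathfrak{o}_{\mathbf{P}}(1)$ follows from Theorem \ref{thm:ULLN_HT} applied to $\mathcal{G}$ together with the law of large numbers for $\Prob_N[Z]$; combined with the hypothesis that $0$ is the unique (hence, by continuity of $\Psi$ and compactness of $\mathcal{A}_c$, well-separated) zero of $\Psi$, this gives $\hat{\alpha}_N=\mathfrak{o}_{\mathbf{P}}(1)$. I would then invoke the $Z$-estimation master theorem (Theorem 3.3.1 of \cite{van1996weak}), whose ingredients are checked as follows: $\Psi_N(\hat{\alpha}_N)=0$ holds exactly; the stochastic equicontinuity term equals
\[
\sqrt N(\Psi_N-\Psi)(\hat{\alpha}_N)-\sqrt N(\Psi_N-\Psi)(0)=\G_N^\pi\big[(G(Z^\top\hat{\alpha}_N)-1)Z\big],
\]
which is $\mathfrak{o}_{\mathbf{P}}(1)$ by the asymptotic equicontinuity of $\G_N^\pi$ over $\mathcal{G}$ (Theorem \ref{thm:weak_convergence_HT}, whose finite-dimensional hypothesis is supplied by Proposition \ref{prop:cov_G_pi}) together with $\pnorm{(G(Z^\top\hat{\alpha}_N)-1)Z}{L_2(P)}=\mathfrak{o}_{\mathbf{P}}(1)$; and $\Psi$ is Fr\'echet differentiable at $0$ because $G'$ is continuous at $0$ and $Z$ is bounded. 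Since $\sqrt N\Psi_N(0)=(\G_N^\pi-\G_N)(Z)=\mathcal{O}_{\mathbf{P}}(1)$, the theorem yields $\sqrt N\hat{\alpha}_N=-\dot\Psi(0)^{-1}(\G_N^\pi-\G_N)(Z)+\mathfrak{o}_{\mathbf{P}}(1)$, which is exactly \eqref{ineq:cov_structure_cal_0} and in particular gives $\sqrt N\hat{\alpha}_N=\mathcal{O}_{\mathbf{P}}(1)$.

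Next I would propagate this expansion. Writing $G(Z^\top\hat{\alpha}_N)=1+(G(Z^\top\hat{\alpha}_N)-1)$ gives $\G_N^{\pi,c}(f)=\G_N^\pi(f)+\sqrt N\,\Prob_N^\pi\big[(G(Z^\top\hat{\alpha}_N)-1)f\big]$. A Taylor expansion $G(Z^\top\hat{\alpha}_N)-1=G'(0)Z^\top\hat{\alpha}_N+R_N$, where by boundedness of $Z$, continuity of $G'$ at $0$, and $\lVert\hat{\alpha}_N\rVert=\mathcal{O}_{\mathbf{P}}(N^{-1/2})$ the remainder obeys $\sqrt N\sup_{z\in\mathcal{Z}}\abs{R_N(z)}=\mathfrak{o}_{\mathbf{P}}(1)$, reduces the second term to $G'(0)(\sqrt N\hat{\alpha}_N)^\top\Prob_N^\pi[Zf]+\mathfrak{o}_{\mathbf{P}}(1)$. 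Replacing $\Prob_N^\pi[Zf]$ by $P[Zf]$ (Theorem \ref{thm:ULLN_HT}) and substituting \eqref{ineq:cov_structure_cal_0}, the factor $G'(0)$ cancels and, with $\beta_f\equiv P(f(Y)Z^\top)(P(ZZ^\top))^{-1}$,
\[
\G_N^{\pi,c}(f)=\G_N^\pi(f)-\beta_f(\G_N^\pi-\G_N)(Z)+\mathfrak{o}_{\mathbf{P}}(1).
\]
Using linearity of $\G_N^\pi$ and the decomposition $\G_N^\pi=\G_N+D_N$ with $D_N(h)\equiv\sqrt N(\Prob_N^\pi-\Prob_N)(h)$, and noting that $\mathcal{T}(f)=f-\beta_f Z$, this rearranges to the key representation $\G_N^{\pi,c}(f)=\G_N(f)+D_N(\mathcal{T}(f))+\mathfrak{o}_{\mathbf{P}}(1)$.

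Finally the covariance is read off. The joint finite-dimensional convergence of $(\G_N,D_N)$ to mutually independent Gaussians is precisely the content of the proof of Proposition \ref{prop:cov_G_pi}: the super-population part $\G_N(f)$ converges to $\G$ with covariance $P(fg)-(Pf)(Pg)$, while the design noise $D_N(h)$ converges to an independent $D$ with covariance $\mu_{\pi1}P(h_1h_2)+\mu_{\pi2}(Ph_1)(Ph_2)$, now applied to the bounded $(Y,Z)$-measurable functions $\mathcal{T}(f)$. Since the limits are independent the cross terms vanish, and one obtains
\[
\mathrm{Cov}\big(\G^{\pi,c}(f),\G^{\pi,c}(g)\big)=P(fg)-(Pf)(Pg)+\mu_{\pi1}P\big(\mathcal{T}(f)\mathcal{T}(g)\big)+\mu_{\pi2}(P\mathcal{T}(f))(P\mathcal{T}(g)),
\]
as claimed; tightness of $\G^{\pi,c}$ follows as in Proposition \ref{prop:cov_G_pi} via the continuity of $f\mapsto\mathcal{T}(f)$. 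I expect the main obstacle to be the linearization step, specifically verifying the stochastic equicontinuity condition for the estimating equation: this is exactly where the new Horvitz--Thompson Donsker theorem (Theorem \ref{thm:weak_convergence_HT}) does the essential work. Once \eqref{ineq:cov_structure_cal_0} is in hand, the remaining expansion and covariance bookkeeping are routine.
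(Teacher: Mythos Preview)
Your proposal is correct and follows essentially the same route as the paper: set up the calibration constraint as a $Z$-estimation problem for $\hat{\alpha}_N$, linearize to obtain \eqref{ineq:cov_structure_cal_0}, expand $\G_N^{\pi,c}(f)$ into $\G_N(f)+ (\G_N^\pi-\G_N)(\mathcal{T}(f))+\mathfrak{o}_{\mathbf{P}}(1)$, and read off the covariance via the conditional CLT device of Proposition~\ref{prop:cov_G_pi}. The only notable difference is in how the stochastic equicontinuity for the $Z$-theorem is checked: you invoke the Donsker theorem (Theorem~\ref{thm:weak_convergence_HT}) for $\G_N^\pi$ over the parametric class $\mathcal{G}$, whereas the paper Taylor-expands $G(Z^\top\hat\alpha_N)-1=G'(Z^\top\tilde\alpha)Z^\top\hat\alpha_N$ and uses only the Glivenko--Cantelli theorem (Theorem~\ref{thm:ULLN_HT}) to obtain the slightly sharper bound $\mathfrak{o}_{\mathbf{P}}(\sqrt{N}\lVert\hat\alpha_N\rVert)$---a more elementary argument, but both suffice.
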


As we will see in the proofs, the asymptotic expansion for $\sqrt{N}\hat{\alpha}_N$ in (\ref{ineq:cov_structure_cal_0}) plays a crucial role in identifying the covariance structure of $\G^{\pi,c}$. Although here we only study one particular calibration method that matches the population mean, other calibration methods are also possible. Typically different calibration methods only differ in terms of the exact form of the corresponding operators $\mathcal{T}$; see e.g. \cite{saegusa2013weighted} for various calibration methods under the (two-phase) stratified sampling design.

\subsection{Other variants}

Our global limit theorems in Theorems \ref{thm:ULLN_HT} and \ref{thm:weak_convergence_HT} can be used for several other variants of the Horvitz-Thompson empirical processes studied in \cite{boistard2017functional}. We illustrate this by considering Donsker theorems for the variants as detailed below.

First consider $\sqrt{n}(\Prob_N^\pi - \Prob_N)$. We have the following:
\begin{corollary}\label{cor:P_N_center_Donsker}
	Suppose that (A1) and (A2-CLT) hold, and that $\mathcal{F}$ is $P$-Donsker. Further suppose that the conditions in Proposition \ref{prop:cov_G_pi} hold, and that $n/N\to \lambda \in (0,1)$. Then $\sqrt{n}(\Prob_N^\pi - \Prob_N)$ converges weakly in $\ell^\infty(\mathcal{F})$ to a Gaussian process $\bar{\G}^\pi$ whose covariance structure is given by the following: for any $f,g \in \mathcal{F}$,
	\begin{align*}
	&\mathrm{Cov}(\bar{\G}^\pi(f), \bar{\G}^\pi (g)) = \lambda \big(\mu_{\pi 1} P(fg)+\mu_{\pi 2}(Pf)(Pg)\big)\\
& = 
\begin{cases}
(1-\lambda)\big(P(fg)-(Pf)(Pg)\big) & \textrm{under SWOR}\\
\lambda(A-1) \cdot P(fg) & \textrm{under Bernoulli}\\
\lambda\big((A-1)\cdot P(fg)-d^{-1}(1-\lambda)^2(Pf)(Pg)\big) & \textrm{under Rejective}\\
\end{cases}
\end{align*}
Here $\lambda = \lim_N n/N, A = \lim_N N^{-1}\sum_{i=1}^N \pi_i^{-1}, d = \lim_N N^{-1}\sum_{i=1}^N \pi_i(1-\pi_i)$.
\end{corollary}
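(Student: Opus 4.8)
The plan is to build everything on the additive decomposition $\sqrt{N}(\Prob_N^\pi - \Prob_N) = \G_N^\pi - \G_N$ recorded in the discussion following Proposition \ref{prop:cov_G_pi}, together with the elementary rescaling $\sqrt{n}(\Prob_N^\pi - \Prob_N) = \sqrt{n/N}\,(\G_N^\pi - \G_N)$, where $\sqrt{n/N} \to \sqrt{\lambda}$ by hypothesis. Thus it suffices to show that $\G_N^\pi - \G_N$ converges weakly in $\ell^\infty(\mathcal{F})$ to a tight centered Gaussian process $\bar{\G}_0^\pi$ with covariance $\mu_{\pi 1}P(fg) + \mu_{\pi 2}(Pf)(Pg)$, and then to multiply the limit by $\sqrt{\lambda}$, which pulls a factor $\lambda$ into the covariance.

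First I would dispose of asymptotic tightness. Under (A1), (A2-CLT), and the conditions of Proposition \ref{prop:cov_G_pi}, condition (D1) of Theorem \ref{thm:weak_convergence_HT} is met, so that theorem yields $\G_N^\pi \rightsquigarrow \G^\pi$; in particular $\G_N^\pi$ is asymptotically equicontinuous with respect to the relevant pseudo-metric. Since $\mathcal{F}$ is $P$-Donsker, $\G_N$ is asymptotically equicontinuous as well. The difference of two asymptotically equicontinuous sequences is again asymptotically equicontinuous, so $\G_N^\pi - \G_N$ is asymptotically tight, and multiplication by the bounded, convergent scalar $\sqrt{n/N}$ preserves this.

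It then remains to identify the finite-dimensional limits, which carries the real content. Here I would invoke the analysis underlying Proposition \ref{prop:cov_G_pi}: the second term $\sqrt{N}(\Prob_N^\pi - \Prob_N)(f) = N^{-1/2}\sum_{i=1}^N (\xi_i/\pi_i - 1)f(Y_i)$ obeys a conditional central limit theorem by (F1), with conditional variance governed by (F2)--(F3), and---crucially---the sampling randomness $\{\xi_i\}$ is conditionally independent of the super-population $Y^{(N)}$ given $Z^{(N)}$. A conditioning argument on $(Y^{(N)},Z^{(N)})$ then shows that the sampling noise $\sqrt{N}(\Prob_N^\pi - \Prob_N)$ is asymptotically independent of the super-population fluctuation $\G_N$, and converges finite-dimensionally to a centered Gaussian process $\bar{\G}_0^\pi$ with covariance $\mu_{\pi 1}P(fg) + \mu_{\pi 2}(Pf)(Pg)$. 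This decoupling is precisely what makes $\mathrm{Cov}(\G^\pi) = \mathrm{Cov}(\G) + \mathrm{Cov}(\bar{\G}_0^\pi)$ consistent with the formula in Proposition \ref{prop:cov_G_pi}. Combining this with the equicontinuity above gives $\G_N^\pi - \G_N \rightsquigarrow \bar{\G}_0^\pi$, and rescaling by $\sqrt{\lambda}$ produces the stated covariance $\lambda(\mu_{\pi 1}P(fg) + \mu_{\pi 2}(Pf)(Pg))$.

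Finally, the three displayed special cases follow by substituting the values of $\mu_{\pi 1},\mu_{\pi 2}$ from Table \ref{table:mu_sampling_design} and simplifying; for instance, under SWOR one uses $\mu_{\pi 1} = \lambda^{-1} - 1$ and $\mu_{\pi 2} = 1 - \lambda^{-1}$, so that the two terms combine into $(1-\lambda)\big(P(fg) - (Pf)(Pg)\big)$, while the Bernoulli and rejective cases are immediate from $\mu_{\pi 2}=0$ and $\mu_{\pi 2}=-d^{-1}(1-\lambda)^2$. The main obstacle I anticipate is the finite-dimensional step, specifically making rigorous the asymptotic independence of the sampling noise and the super-population fluctuation; once that decoupling is in hand, everything else is either a direct appeal to Theorem \ref{thm:weak_convergence_HT} and Proposition \ref{prop:cov_G_pi} or routine arithmetic.
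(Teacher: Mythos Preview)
Your proposal is correct and follows essentially the same approach as the paper: the decomposition $\sqrt{n}(\Prob_N^\pi-\Prob_N)=\sqrt{n}(\Prob_N^\pi-P)-\sqrt{n}(\Prob_N-P)$ gives equicontinuity from that of $\G_N^\pi$ (Theorem \ref{thm:weak_convergence_HT}) and $\G_N$, and the finite-dimensional limits are identified by the method of Proposition \ref{prop:cov_G_pi}. The asymptotic-independence step you flag as the main obstacle is exactly what Lemma \ref{lem:sum_cond_CLT} supplies in the paper's framework, so there is no gap.
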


The covariance formula above is a direct consequence of the assumptions in Proposition \ref{prop:cov_G_pi}. Furthermore, the above corollary extends Theorem 3.1 of \cite{boistard2017functional} from the one-dimensional case $\mathcal{F}=\{\bm{1}_{(-\infty,t]}: t\in \R\}$ to a general setting. 

Next consider the H\'ajek empirical process. Let
\begin{align*}
\Prob_N^{\pi, H}(f) \equiv \frac{1}{\hat{N}}\sum_{i=1}^N \frac{\xi_i}{\pi_i} f(Y_i),\quad \hat{N} \equiv \sum_{i=1}^N \frac{\xi_i}{\pi_i}
\end{align*} 
be the H\'ajek empirical measure. We have the following:
\begin{corollary}\label{cor:Hajek_Donsker}
Suppose that (A1) and (A2-CLT) hold, and that $\mathcal{F}$ is $P$-Donsker. Further suppose that the conditions in Proposition \ref{prop:cov_G_pi} hold, and that $n/N\to \lambda \in (0,1)$. Then $\sqrt{n}\big(\Prob_N^{\pi, H}-\Prob_N\big)$ converges weakly to a Gaussian process $\bar{\G}^{\pi,H}$ whose covariance structure is given by the following: for any $f,g \in \mathcal{F}$,
\begin{align*}
&\mathrm{Cov}(\bar{\G}^{\pi,H}(f), \bar{\G}^{\pi,H} (g)) = \lambda \mu_{\pi 1} \big(P(fg)-(Pf)(Pg)\big)\\
& = 
\begin{cases}
(1-\lambda)\big(P(fg)-(Pf)(Pg)\big) & \textrm{under SWOR}\\
\lambda(A-1) \cdot \big(P(fg)-(Pf)(Pg)\big)& \textrm{under Bernoulli}\\
\lambda(A-1) \cdot \big(P(fg)-(Pf)(Pg)\big) & \textrm{under Rejective}\\
\end{cases}
\end{align*}
Here $\lambda = \lim_N n/N, A = \lim_N N^{-1}\sum_{i=1}^N \pi_i^{-1}$.
\end{corollary}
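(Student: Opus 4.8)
The plan is to reduce the H\'ajek process to the already-understood process $\sqrt{n}(\Prob_N^\pi-\Prob_N)$ from Corollary \ref{cor:P_N_center_Donsker} by a delta-method-type linearization of the self-normalization. Write $W_N\equiv \hat N/N=\Prob_N^\pi(\bm 1)$, where $\bm 1$ denotes the constant function $1$, so that $\Prob_N^{\pi,H}(f)=W_N^{-1}\Prob_N^\pi(f)$ and $\Prob_N(\bm 1)=1$. Since (A2-CLT) implies (A2-LLN), we have $W_N=1+\mathfrak{o}_{\mathbf{P}}(1)$. A one-line rearrangement then gives the exact identity
\begin{align*}
\sqrt{n}\big(\Prob_N^{\pi,H}-\Prob_N\big)(f)=\frac{1}{W_N}\Big[\sqrt{n}(\Prob_N^\pi-\Prob_N)(f)-\sqrt{n}(\Prob_N^\pi-\Prob_N)(\bm 1)\,\Prob_N(f)\Big],
\end{align*}
valid for every $f\in\mathcal{F}$. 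This is precisely the functional delta method applied to the ratio map $\mu\mapsto \mu(\cdot)/\mu(\bm 1)$, with the denominator $W_N$ absorbing the self-normalization in $\Prob_N^{\pi,H}$.

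Second, I would apply Corollary \ref{cor:P_N_center_Donsker} not to $\mathcal{F}$ but to the augmented class $\mathcal{F}\cup\{\bm 1\}$, which remains $P$-Donsker (a singleton is trivially Donsker and a finite union of Donsker classes is Donsker), observing that conditions (F1)--(F3) constrain the sampling design alone and so are unaffected by this augmentation. This yields $\sqrt{n}(\Prob_N^\pi-\Prob_N)\rightsquigarrow \bar{\G}^\pi$ in $\ell^\infty(\mathcal{F}\cup\{\bm 1\})$, in particular jointly at $f$ and at $\bm 1$. Since $\mathcal{F}$ is $P$-Donsker, $\pnorm{\G_N}{\mathcal{F}}=\mathcal{O}_{\mathbf{P}}(1)$, hence $\sup_{f\in\mathcal{F}}\abs{\Prob_N(f)-Pf}=\mathcal{O}_{\mathbf{P}}(N^{-1/2})=\mathfrak{o}_{\mathbf{P}}(1)$; combined with $W_N^{-1}=1+\mathfrak{o}_{\mathbf{P}}(1)$ and $\pnorm{\sqrt{n}(\Prob_N^\pi-\Prob_N)}{\mathcal{F}\cup\{\bm 1\}}=\mathcal{O}_{\mathbf{P}}(1)$, a Slutsky argument shows the displayed expression equals $\sqrt{n}(\Prob_N^\pi-\Prob_N)(f)-\sqrt{n}(\Prob_N^\pi-\Prob_N)(\bm 1)\,Pf+\mathfrak{o}_{\mathbf{P}}(1)$ uniformly over $f\in\mathcal{F}$. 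The map sending a function $h$ on $\mathcal{F}\cup\{\bm 1\}$ to $f\mapsto h(f)-Pf\,h(\bm 1)$ is continuous and linear from $\ell^\infty(\mathcal{F}\cup\{\bm 1\})$ to $\ell^\infty(\mathcal{F})$, so the continuous mapping theorem identifies the limit as $\bar{\G}^{\pi,H}(f)=\bar{\G}^\pi(f)-Pf\cdot\bar{\G}^\pi(\bm 1)$.

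Finally, I would read off the covariance from that of $\bar{\G}^\pi$. Using $\mathrm{Cov}(\bar{\G}^\pi(f),\bar{\G}^\pi(g))=\lambda[\mu_{\pi1}P(fg)+\mu_{\pi2}(Pf)(Pg)]$ with $g=\bm 1$ gives $\mathrm{Cov}(\bar{\G}^\pi(f),\bar{\G}^\pi(\bm 1))=\lambda(\mu_{\pi1}+\mu_{\pi2})Pf$ and $\mathrm{Var}(\bar{\G}^\pi(\bm 1))=\lambda(\mu_{\pi1}+\mu_{\pi2})$; expanding $\mathrm{Cov}(\bar{\G}^{\pi,H}(f),\bar{\G}^{\pi,H}(g))$ bilinearly, the $\mu_{\pi2}$ contributions cancel entirely while the coefficient of $(Pf)(Pg)$ collapses to $-\lambda\mu_{\pi1}$, leaving exactly $\lambda\mu_{\pi1}\big(P(fg)-(Pf)(Pg)\big)$; the specializations to SWOR, Bernoulli and rejective sampling then follow from Table \ref{table:mu_sampling_design}. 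I expect the only genuinely delicate point to be the \emph{uniform} (rather than finite-dimensional) negligibility of the remainder in the Slutsky step: one must control the products of the scalar $\mathfrak{o}_{\mathbf{P}}(1)$ factors ($W_N^{-1}-1$ and $\sup_{f}\abs{\Prob_N f-Pf}$) against the $\ell^\infty(\mathcal{F})$-bounded random processes, which is exactly where the Donsker hypothesis on the augmented class is essential.
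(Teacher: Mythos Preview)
Your proposal is correct and follows essentially the same route as the paper. Both arguments linearize the self-normalization so that $\sqrt{n}(\Prob_N^{\pi,H}-\Prob_N)(f)$ reduces, up to $\mathfrak{o}_{\mathbf{P}}(1)$ in $\ell^\infty(\mathcal{F})$, to $\sqrt{n/N}\,\mathbb{Y}_N(f)$ with $\mathbb{Y}_N(f)=N^{-1/2}\sum_i(\xi_i/\pi_i-1)(f(Y_i)-Pf)$, which is exactly your $\bar{\G}^\pi(f)-Pf\cdot\bar{\G}^\pi(\bm 1)$; the only cosmetic difference is that you invoke Corollary~\ref{cor:P_N_center_Donsker} on the augmented class $\mathcal{F}\cup\{\bm 1\}$ and the continuous mapping theorem, while the paper writes the decomposition $\sqrt{n}(\Prob_N^{\pi,H}-\Prob_N)=\sqrt{n/N}\big(\mathbb{Y}_N+(N/\hat N-1)\tilde{\G}_N^\pi\big)$ directly and reads off the covariance of $\mathbb{Y}_N$ along the lines of Proposition~\ref{prop:cov_G_pi}.
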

As we will see in the proofs, the covariance structure of the limit of $\sqrt{n}(\Prob_N^{\pi,H}-\Prob_N)$ is the same as that of 
\begin{align*}
f \mapsto \frac{1}{\sqrt{N}}\sum_{i=1}^N \bigg(\frac{\xi_i}{\pi_i}-1\bigg)(f(Y_i)-Pf)
\end{align*}
up to a factor of $\lambda$, which can be determined by the conditions of Proposition \ref{prop:cov_structure_cal}. Furthermore, the above corollary extends Theorem 4.2 of \cite{boistard2017functional}, again from the one-dimensional case to a general setting.

\begin{remark}
	Under (F3), since the harmonic mean is less than the arithmetic mean, we have $A^{-1} = \lim_N ( N^{-1} \sum_{i=1}^N \pi_i^{-1} )^{-1}  \le \lim_N ( N^{-1} \sum_{i=1}^N \pi_i ) = \lim_N \frac{n}{N} = \lambda$, where the next to last equality follows by computing the second moment and using (F3). It then follows that $\lambda (A-1) \geq 1 - \lambda$ under (F3).
\end{remark}

\subsection{Conditional limit theorems}

In this section, we consider conditional versions of the (global) uniform limit theorems. For clarity of presentation, following \cite{cheng2010bootstrap} and \cite{wellner1996bootstrapping}, we introduce the following notion:
\begin{definition}
	Let $\{\Delta_N\}_{N \in \N}$ be a sequence of random variables defined on $(\mathcal{S}_N\times \mathcal{X},\sigma(\mathcal{S}_N)\times \mathcal{A}, \Prob)$. We say that $\Delta_N$ is of order $\mathfrak{o}_{\Prob_d}(1)$ in $\Prob_{(Y,Z)}$-probability if for any $\epsilon,\delta>0$, we have $
	\Prob_{(Y,Z)} \big(\Prob_{d|(Y,Z)}\big(\abs{\Delta_N}>\epsilon\big)>\delta\big)\to 0 $ as $N \to \infty$. 
\end{definition}

Below we establish conditional versions of Glivenko-Cantelli and Donsker theorems for $\Prob_N^\pi-\Prob_N$.
\begin{corollary}(Conditional Glivenko-Cantelli Theorem)\label{cor:conditional_ULLN}
	Suppose that (A1) and (A2-LLN) hold. If $\mathcal{F}$ is $P$-Glivenko-Cantelli, then 
	\begin{align*}
	\sup_{f \in \mathcal{F}}\abs{(\Prob_N^\pi-\Prob_N)(f)}=\mathfrak{o}_{\Prob_d}(1) \textrm{ in } \Prob_{(Y,Z)}\textrm{-probability}.
	\end{align*}
\end{corollary}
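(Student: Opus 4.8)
The plan is to deduce the conditional statement from the unconditional Glivenko-Cantelli theorem (Theorem \ref{thm:ULLN_HT}) by a Fubini-plus-Markov argument that converts ordinary convergence in $\Prob$-probability into the conditional mode $\mathfrak{o}_{\Prob_d}(1)$ in $\Prob_{(Y,Z)}$-probability. The key structural observation, on the product space $(\mathcal{S}_N\times\mathcal{X},\sigma(\mathcal{S}_N)\times\mathcal{A},\Prob)$ of \eqref{def:prob_meas}, is that for a $[0,1]$-valued (more generally bounded) functional the conditional mode follows from unconditional convergence in probability, so essentially all the work is already contained in Theorem \ref{thm:ULLN_HT}.

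First I would establish the unconditional bound. Decomposing $\Prob_N^\pi-\Prob_N=(\Prob_N^\pi-P)-(\Prob_N-P)$ and using the triangle inequality,
\begin{align*}
\Delta_N\equiv\sup_{f\in\mathcal{F}}\abs{(\Prob_N^\pi-\Prob_N)(f)}\leq\sup_{f\in\mathcal{F}}\abs{(\Prob_N^\pi-P)(f)}+\sup_{f\in\mathcal{F}}\abs{(\Prob_N-P)(f)}.
\end{align*}
Under (A1) and (A2-LLN) the first term on the right is $\mathfrak{o}_{\mathbf{P}}(1)$ by Theorem \ref{thm:ULLN_HT}, while the second is $\mathfrak{o}_{\mathbf{P}}(1)$ because $\mathcal{F}$ is $P$-Glivenko-Cantelli; hence $\Prob(\Delta_N>\epsilon)\to 0$ for every $\epsilon>0$. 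Countability of $\mathcal{F}$ ensures $\Delta_N$ is measurable, so no outer-expectation issues arise.

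Second, I would upgrade this to the conditional mode. For a fixed $\epsilon>0$ put $p_N\equiv\Prob_{d|(Y,Z)}(\Delta_N>\epsilon)\in[0,1]$; the construction \eqref{def:prob_meas} guarantees that $p_N$ is a measurable function of $(Y,Z)$ and that the tower property $\Prob(\Delta_N>\epsilon)=\E_{(Y,Z)}[p_N]$ holds. Since $\E_{(Y,Z)}[p_N]\to 0$ and $p_N\geq 0$, Markov's inequality applied to $p_N$ gives, for every $\delta>0$,
\begin{align*}
\Prob_{(Y,Z)}\big(\Prob_{d|(Y,Z)}(\Delta_N>\epsilon)>\delta\big)=\Prob_{(Y,Z)}(p_N>\delta)\leq\delta^{-1}\E_{(Y,Z)}[p_N]\to 0,
\end{align*}
which is precisely $\Delta_N=\mathfrak{o}_{\Prob_d}(1)$ in $\Prob_{(Y,Z)}$-probability.

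I do not expect a genuine analytic obstacle here: the probabilistic content has been absorbed into Theorem \ref{thm:ULLN_HT}, and the remaining step is soft. The only point requiring care is the measure-theoretic bookkeeping in the second step --- checking that the conditional probability $p_N$ is well-defined and measurable and that the tower property is legitimate for the specific product construction in \eqref{def:prob_meas} --- after which a single application of Markov's inequality to the bounded functional $p_N$ finishes the proof. This also clarifies why the conditional Glivenko-Cantelli theorem needs exactly the same hypotheses as its unconditional counterpart.
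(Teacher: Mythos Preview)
Your proposal is correct and follows essentially the same route as the paper: the paper packages your second step as Lemma~\ref{lem:transfer_probability} (whose forward direction is exactly your Markov-inequality argument on $p_N$) and then invokes it together with the unconditional result, which is precisely your triangle-inequality reduction via Theorem~\ref{thm:ULLN_HT}.
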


\begin{corollary}(Conditional Donsker Theorem)\label{cor:conditional_UCLT}
	Suppose that (A1) and (A2-CLT) hold, and that $\mathcal{F}$ is $P$-Donsker. Further suppose that the conditions in Proposition \ref{prop:cov_G_pi} hold, and that $n/N\to \lambda \in (0,1)$. Then 
	\begin{align*}
	\sqrt{n}(\Prob_N^\pi - \Prob_N)\rightsquigarrow \bar{\G}^\pi\textrm{ in }\ell^\infty(\mathcal{F})\textrm{ in } \Prob_{(Y,Z)}\textrm{-probability}.
	\end{align*}
	Here $\bar{\G}^\pi$ is a Gaussian process whose covariance structure is given in Corollary \ref{cor:P_N_center_Donsker}.
\end{corollary}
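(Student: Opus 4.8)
The plan is to establish the conditional weak convergence by verifying, conditionally on the super-population sample $(Y^{(N)},Z^{(N)})$ and in $\Prob_{(Y,Z)}$-probability, the two standard ingredients: (i) finite-dimensional convergence and (ii) asymptotic equicontinuity. The conditional analogue of the characterization of weak convergence recalled just before Theorem \ref{thm:weak_convergence_HT} (cf. \cite{cheng2010bootstrap,wellner1996bootstrapping}) reduces the corollary to these two statements, the requisite asymptotic measurability being automatic since $\mathcal{F}$ is countable. Writing $X_N\equiv \sqrt{n}(\Prob_N^\pi-\Prob_N)$ and noting $X_N(f)=\sqrt{n/N}\cdot N^{-1/2}\sum_{i=1}^N(\xi_i/\pi_i-1)f(Y_i)$, we observe that conditionally on $(Y^{(N)},Z^{(N)})$ the only randomness in $X_N$ is carried by the design, and $\E_{\Prob_d}[X_N(f)|Y,Z]=0$.

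For the conditional finite-dimensional convergence I would reuse the argument behind Proposition \ref{prop:cov_G_pi}, which is itself conditional in nature. Fix $f_1,\dots,f_k\in\mathcal{F}$ and $a\in\R^k$; since $\mathcal{F}$ is uniformly bounded, the variables $V_i\equiv\sum_j a_j f_j(Y_i)$ are i.i.d.\ bounded, so (F1) yields the design-based central limit theorem $S_N^{-1}(\Prob_N^\pi-\Prob_N)(V)\rightsquigarrow\mathcal{N}(0,1)$ under $\Prob_d$ for $\Prob_{(Y,Z)}$-a.e.\ $\omega$. Conditions (F2)--(F3) together with the law of large numbers give $NS_N^2\to_{\Prob_{(Y,Z)}}\mu_{\pi1}P(V^2)+\mu_{\pi2}(PV)^2$, and with $n/N\to\lambda$ this identifies the conditional limit of $\sum_j a_j X_N(f_j)=X_N(V)$ as the \emph{deterministic} Gaussian law $\mathcal{N}\big(0,\lambda(\mu_{\pi1}P(V^2)+\mu_{\pi2}(PV)^2)\big)$. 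By the Cram\'er--Wold device this matches $\bar{\G}^\pi$ with the covariance of Corollary \ref{cor:P_N_center_Donsker}, and, since the limit does not depend on $(Y,Z)$, gives conditional finite-dimensional convergence in $\Prob_{(Y,Z)}$-probability.

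The heart of the matter is the conditional equicontinuity, and here the key observation is that it follows \emph{for free} from the \emph{unconditional} equicontinuity already established in the proof of Theorem \ref{thm:weak_convergence_HT}. Let $\mathcal{F}_\delta\equiv\{f-g:\pnorm{f-g}{L_2(P)}<\delta\}$ and set $B_N^\delta(Y,Z)\equiv\E_{\Prob_d}\big[\pnorm{X_N}{\mathcal{F}_\delta}|Y,Z\big]$. The multiplier-inequality argument underlying Theorem \ref{thm:weak_convergence_HT} produces an \emph{expectation} bound, namely $\lim_{\delta\downarrow0}\limsup_N \E\,\pnorm{X_N}{\mathcal{F}_\delta}=0$; by Fubini this is exactly $\lim_{\delta\downarrow0}\limsup_N\E_{(Y,Z)}B_N^\delta=0$. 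Two applications of Markov's inequality then convert this into the required conditional statement: for any $\epsilon,\eta>0$, on $\{B_N^\delta\le\eta\}$ one has $\Prob_{d|(Y,Z)}(\pnorm{X_N}{\mathcal{F}_\delta}>\epsilon)\le B_N^\delta/\epsilon\le\eta/\epsilon$, while $\Prob_{(Y,Z)}(B_N^\delta>\eta)\le \E_{(Y,Z)}B_N^\delta/\eta$, so that $\pnorm{X_N}{\mathcal{F}_{\delta_N}}=\mathfrak{o}_{\Prob_d}(1)$ in $\Prob_{(Y,Z)}$-probability for any $\delta_N\downarrow0$. Combining this with the conditional finite-dimensional convergence yields the claim.

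The main obstacle, and the only point requiring genuine care, is to confirm that the equicontinuity delivered by the proof of Theorem \ref{thm:weak_convergence_HT} is an honest $L_1(\Prob)$ bound on $\pnorm{X_N}{\mathcal{F}_\delta}$ rather than merely an in-probability statement, since it is precisely this integrability that the Fubini--Markov passage exploits; this is guaranteed because the multiplier inequality of \cite{han2017sharp} controls the expected modulus of continuity directly. A secondary point is that the conditional finite-dimensional limit must be \emph{deterministic} in order for the conditional limit to coincide with the fixed Gaussian $\bar{\G}^\pi$ rather than a random one; this is exactly what (F2)--(F3) arrange through the in-probability convergence of $NS_N^2$.
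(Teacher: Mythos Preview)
Your proposal is correct and follows essentially the same route as the paper: verify conditional finite-dimensional convergence via the Cram\'er--Wold device and (F1)--(F3) exactly as in Proposition~\ref{prop:cov_G_pi}, and deduce conditional asymptotic equicontinuity from the \emph{expectation} bound $\E\sup_{f\in\mathcal{F}_{\delta_N}}\lvert\bar{\G}_N^\pi(f)\rvert\to 0$ already implicit in the proof of Theorem~\ref{thm:weak_convergence_HT}, followed by Markov's inequality. One small refinement: the multiplier inequality alone handles the centered piece $\tilde{\G}_N^\pi$, while the residual term $Pf\cdot N^{-1/2}\sum_i(\xi_i/\pi_i-1)$ requires an $L_1$ (or $L_2$) bound on $N^{-1/2}\sum_i(\xi_i/\pi_i-1)$, which follows from (A1) and (F3) rather than from the multiplier inequality itself.
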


The precise meaning of the above conditional Donsker theorem is that $d_{\mathrm{BL},d}(\sqrt{n}(\Prob_N^\pi - \Prob_N), \bar{\G}^\pi)\equiv \sup_{H \in \mathrm{BL}_1(\ell^\infty(\mathcal{F}))} \abs{\E^\ast_{d|(Y,Z)} H\big(\sqrt{n}(\Prob_N^\pi - \Prob_N)\big) - \E H(\bar{\G}^\pi)}\to 0$ in $\Prob_{(Y,Z)}$-probability.

\section{Applications}\label{section:applications}

In this section, we apply the new tools developed in Section \ref{section:theory} in statistical problems including:
\begin{enumerate}
	\item $M$-estimation (or \emph{empirical risk minimization}) in a general non-parametric model;
	\item $Z$-estimation in a general semi-parametric model;
	\item frequentist theory for Bayes procedures, namely, theory of posterior contraction rates and Bernstein-von Mises type theorems,
\end{enumerate}
where the usual likelihood is replaced by the Horvitz-Thompson weighted likelihood. We will not consider the calibrated version of these problems for simplicity of exposition, given that the corresponding theory has been fully developed in Section \ref{section:theory}. These problems are not meant to be exhaustive; they are demonstrated as an illustration and a proof of concept of the new tools.

\subsection{$M$-estimation}
Consider the canonical \emph{empirical risk minimization} problem (or ``$M$-estimation'') based on weighted likelihood:
\begin{align}\label{def:ERM}
\hat{f}_N^\pi \equiv \arg\min_{f \in \mathcal{F}} \Prob_N^\pi f.
\end{align}
The quality of the estimator defined in (\ref{def:ERM}) is evaluated through the \emph{excess risk} of $\hat{f}_N^\pi$, denoted $\mathcal{E}_P(\hat{f}_N^\pi)$, where 
\begin{align*}
\mathcal{E}_P(f)\equiv Pf-\inf_{g \in \mathcal{F}} Pg,\quad f \in \mathcal{F}.
\end{align*}
The problem of studying excess risk of empirical risk minimizers under the usual empirical measure has been extensively studied in the 2000s; we only refer the reader to \cite{gine2006concentration,koltchinskii2006local,koltchinskii2008oracle} and references therein. Under the Horvitz-Thompson empirical measure, \cite{clemenccon2016learning} studied risk bounds for the binary classification problem under sampling designs that are close to the rejective sampling design. Our goal here will be a study of the excess risk for the $M$-estimator based on weighted likelihood as defined in (\ref{def:ERM}) for the general empirical risk minimization problem under general sampling designs.

To this end, let $\mathcal{F}_{\mathcal{E}}(\delta)\equiv \{f \in \mathcal{F}: \mathcal{E}_P(f)<\delta^2\}$, let $\rho_P:\mathcal{F}\times \mathcal{F}\to \R_{\geq 0}$ be such that $\rho_P^2(f,g)\geq P(f-g)^2-\big(P(f-g)\big)^2$, and $D(\delta)\equiv \sup_{f,g \in \mathcal{F}_{\mathcal{E}}(\delta)} \rho_P(f,g)$. Now we may prove the following theorem.
\begin{theorem}\label{thm:M_estimation}
Suppose (A1) holds. Suppose that there exist some $L>0,\kappa\geq 1$ such that $D(\delta)\leq L\cdot \delta^{1/\kappa}$, and that $\mathcal{F}$ is uniformly bounded and satisfies an entropy condition with exponent $\alpha \in (0,2)$. Then there exist some constants $\{C_i\}_{i=1}^3$ only depending on $\pi_0,L,\kappa,\alpha$ such that for any $s,t\geq 0$, with
\begin{align*}
r_N\geq C_1 N^{-\frac{\kappa}{4\kappa-2+\alpha}}+C_2 \bigg(\frac{s\vee t^2}{N}\bigg)^{\frac{\kappa}{4\kappa-2} },
\end{align*}
it holds that
\begin{align*}
\Prob\big(\mathcal{E}_{P}(\hat{f}_N^\pi)\geq r_N^2\big)\leq \frac{C_3}{s}e^{-s/C_3}+\Prob\bigg(\biggabs{\frac{1}{\sqrt{N}}\sum_{i=1}^N\bigg(\frac{\xi_i}{\pi_i}-1\bigg) }>t\bigg).
\end{align*}
\end{theorem}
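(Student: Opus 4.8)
The overall strategy is a localized analysis of the weighted empirical risk minimizer by \emph{peeling} over the excess-risk sublevel sets, in the spirit of the Gin\'e--Koltchinskii theory of $M$-estimation, but with the crucial twist that the only place the sampling design enters the final bound is through the scalar weight fluctuation $\frac{1}{\sqrt N}\sum_{i=1}^N(\xi_i/\pi_i-1)$, whose tail is left explicit. Fix a near-minimizer $f^\ast$ with $Pf^\ast$ arbitrarily close to $\inf_{g\in\mathcal F}Pg$. Optimality of $\hat f_N^\pi$ gives $\Prob_N^\pi\hat f_N^\pi\le\Prob_N^\pi f^\ast$, whence, after adding and subtracting $P$,
\[
\mathcal E_P(\hat f_N^\pi)\le (\Prob_N^\pi-P)(f^\ast-\hat f_N^\pi),
\]
up to a slack that vanishes as $Pf^\ast\downarrow\inf_g Pg$. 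Hence the event $\{\mathcal E_P(\hat f_N^\pi)\ge r_N^2\}$ forces the Horvitz--Thompson empirical process, evaluated at the increment $\hat f_N^\pi-f^\ast$, to reach level $r_N^2$.

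First I would peel over dyadic shells $S_j\equiv\{f\in\mathcal F:2^{j-1}r_N^2\le\mathcal E_P(f)<2^jr_N^2\}$, $j\ge1$, and union-bound. On $S_j$ the margin hypothesis $D(\delta)\le L\delta^{1/\kappa}$ together with $\rho_P\ge\sigma_P$ confines every increment $g=f-f^\ast$ to a $\sigma_P$-ball of radius $v_j\equiv L\,2^{j/(2\kappa)}r_N^{1/\kappa}$. On each shell I would split
\[
(\Prob_N^\pi-P)(g)=(\Prob_N-P)(g)+\frac1N\sum_{i=1}^N\Big(\frac{\xi_i}{\pi_i}-1\Big)g(Y_i),\qquad g=f-f^\ast,
\]
treating the ordinary empirical process and the weight-fluctuation process on their own.

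For the ordinary part I would use Talagrand's inequality localized to the $\sigma_P$-ball of radius $v_j$, with the modulus governed by the entropy exponent $\alpha$ (the mechanism already behind Theorem \ref{thm:local_asymp_moduli} and the ratio bound (\ref{ineq:ratio_HT_rate_1})); this delivers a main term of order $N^{-1/2}v_j^{1-\alpha/2}$ and a sub-exponential deviation of order $v_j\sqrt{s/N}$, with exceptional probabilities summing over $j$ to $\frac{C_3}{s}e^{-s/C_3}$. For the weight-fluctuation part I would condition on $Y^{(N)}$---legitimate because $\{\xi_i\}$ is independent of $Y^{(N)}$ given $Z^{(N)}$---and apply the sharp multiplier inequality of \cite{han2017sharp}, which under (A1) (so $|\xi_i/\pi_i-1|\le\pi_0^{-1}$) bounds the localized multiplier process, \emph{without any entropy correction and irrespective of the dependence among the $\xi_i$}, by the product of the $\sigma_P$-diameter $v_j$ and the scalar fluctuation $\frac1{\sqrt N}|\sum_i(\xi_i/\pi_i-1)|$; on the event $\{\frac1{\sqrt N}|\sum_i(\xi_i/\pi_i-1)|\le t\}$ this is of order $v_j\,t/\sqrt N$. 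Requiring $2^{j-1}r_N^2$ to dominate each of the three contributions, and noting that in every relation the larger shells are progressively easier to control so that the innermost shell binds, yields the balance relations
\[
r_N^{(4\kappa-2+\alpha)/(2\kappa)}\gtrsim N^{-1/2},\qquad r_N^{(2\kappa-1)/\kappa}\gtrsim\sqrt{s/N},\qquad r_N^{(2\kappa-1)/\kappa}\gtrsim\sqrt{t^2/N},
\]
which reproduce precisely the pieces $N^{-\kappa/(4\kappa-2+\alpha)}$ and $((s\vee t^2)/N)^{\kappa/(4\kappa-2)}$ in the threshold for $r_N$; the complementary event of the weight fluctuation contributes the stated tail $\Prob(|\frac1{\sqrt N}\sum_i(\xi_i/\pi_i-1)|>t)$.

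The main obstacle is the entropy-free control of the weight-fluctuation process. The distinctive exponent $4\kappa-2$ (as opposed to $4\kappa-2+\alpha$ for the purely empirical term) signals that the sampling weights must contribute only a \emph{finite-dimensional}, diameter-based perturbation---scaling like $v_j\cdot\frac1{\sqrt N}\sum_i(\xi_i/\pi_i-1)$ rather than like the chained modulus $N^{-1/2}v_j^{1-\alpha/2}$. Extracting such a bound from the sharp multiplier inequality of \cite{han2017sharp}, so that it holds under (A1) alone with no assumption whatsoever on the joint law of the inclusion indicators, and then threading it through the peeling so that the single scalar $\frac1{\sqrt N}\sum_i(\xi_i/\pi_i-1)$ is the only design-dependent quantity surviving in the final bound, is the crux of the argument and the reason the statement remains design-agnostic.
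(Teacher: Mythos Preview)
Your peeling scheme and the three balance relations are exactly right and coincide with the paper's. The gap is the entropy-free bound you assert for the weight-fluctuation process. The multiplier inequality of \cite{han2017sharp} does \emph{not} give
\[
\sup_{\sigma_P g\le v}\biggl|\frac1{\sqrt N}\sum_{i=1}^N\Bigl(\frac{\xi_i}{\pi_i}-1\Bigr)g(Y_i)\biggr|\ \lesssim\ v\cdot\biggl|\frac1{\sqrt N}\sum_{i=1}^N\Bigl(\frac{\xi_i}{\pi_i}-1\Bigr)\biggr|,
\]
and in fact this inequality is false. Under Bernoulli sampling with $\pi_i\equiv 1/2$ the multipliers $\xi_i/\pi_i-1$ are i.i.d.\ Rademacher; the right-hand side is $v\cdot\mathcal O_{\mathbf P}(1)$, while the left-hand side is a symmetrized empirical process over a class with entropy exponent $\alpha$ and is of order $v^{1-\alpha/2}\gg v$ for small $v$. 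What the multiplier inequality actually delivers is a process-to-process comparison $\E\|\sum_i\eta_i(f(Y_i)-Pf)\|_{\mathcal F}^p\lesssim\E\|\sum_i(f(Y_i)-Pf)\|_{\mathcal F}^p$ for nonnegative bounded $\eta_i$; it never factors the process into a diameter times a scalar.

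The paper avoids this by splitting differently. Instead of $\frac{\xi_i}{\pi_i}=1+(\frac{\xi_i}{\pi_i}-1)$, it writes $f(Y_i)=(f(Y_i)-Pf)+Pf$, obtaining
\[
\G_N^\pi(f)=\tilde\G_N^\pi(f)+Pf\cdot\frac1{\sqrt N}\sum_{i=1}^N\Bigl(\frac{\xi_i}{\pi_i}-1\Bigr),\qquad \tilde\G_N^\pi(f)\equiv\frac1{\sqrt N}\sum_{i=1}^N\frac{\xi_i}{\pi_i}\bigl(f(Y_i)-Pf\bigr).
\]
Now the second term \emph{really} factors: $f$ enters only through the deterministic number $Pf$, so over the shell $\mathcal F_j$ the design contributes solely through the scalar $\frac1{\sqrt N}\sum_i(\xi_i/\pi_i-1)$, and on the event that this scalar is at most $t$ one gets the $N^{-1/2}L(r_N2^j)^{1/\kappa}\,t$ piece that produces your third balance relation. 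The first term $\tilde\G_N^\pi$ is handled by feeding the multiplier inequality (all $p$th moments of $\tilde\G_N^\pi$ dominated by those of $\G_N$) into the moment form of ordinary Talagrand and converting back to a tail bound; this yields a one-sided Talagrand inequality for $\tilde\G_N^\pi$ itself (Proposition~\ref{prop:Talagrand_HT} in the paper), which supplies simultaneously the modulus $N^{-1/2}v_j^{1-\alpha/2}$ and the deviation $v_j\sqrt{s/N}$. With this decomposition the rest of your argument goes through verbatim.
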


As an illustration of Theorem \ref{thm:M_estimation}, we consider below two standard settings, regression and classification, similar to the development in \cite{gine2006concentration}. For simplicity of exposition, we also assume that (A2-CLT) holds.

\begin{example}[Bounded regression]
Let $\{(X_i,Y_i) \in \mathcal{X}\times [-1,1]\}_{i=1}^N$ denote the i.i.d. copies of the pairs consisting of covariates $X_i$ and responses $Y_i$. Our goal is to estimate the regression function $g_0(x)\equiv \E[Y|X=x]$ using the weighted least squares method:
\begin{align*}
\hat{g}_N^\pi \equiv \arg\min_{g \in \mathcal{G}} \sum_{i=1}^N \frac{\xi_i}{\pi_i} \big(Y_i-g(X_i)\big)^2,
\end{align*}
where $\mathcal{G}$ is a function class containing functions taking values in $[-1,1]$, and the weights $\{\xi_i,\pi_i\}$ may depend on auxiliary information $Z^{(N)}$. To apply Theorem \ref{thm:M_estimation}, let $\mathcal{F}\equiv \{f_g(x,y)\equiv (y-g(x))^2: g \in \mathcal{G}\}$. Then following the arguments in page 1208 of \cite{gine2006concentration}, we have $
\mathcal{E}_P(f_g)=\pnorm{g-g_0}{L_2(P)}^2$ and we may take $\kappa = 1$. If $\mathcal{G}$ satisfies an entropy condition with exponent $\alpha \in (0,2)$, it is easy to verify that the same holds for $\mathcal{F}$ and hence Theorem \ref{thm:M_estimation} yields
\begin{align*}
\pnorm{\hat{g}_N^\pi-g_0}{L_2(P)}^2= \mathcal{O}_{\mathbf{P}}\big(N^{-\frac{2}{2+\alpha}}\big),
\end{align*}
a very typical rate in the regression problem.
\end{example}

\begin{example}[Classification]
Let $\{(X_i,Y_i) \in \mathcal{X}\times \{0,1\}\}_{i=1}^N$ denote the i.i.d. copies of the pairs consisting of covariates $X_i$ and responses $Y_i$. A classifier $g: \mathcal{X} \to \{0,1\}$ over a class $\mathcal{G}$ has a generalized error $P(Y\neq g(X))$. The excess risk for a classifier $g$ over $\mathcal{G}$ under law $P$ is given by
\begin{align*}
\mathcal{E}_P(g)\equiv P(Y\neq g(X))-\inf_{g' \in \mathcal{G}} P(Y\neq g'(X)).
\end{align*}
It is known that for a given law $P$ on $(X,Y)$, the minimal generalized error is attained by a Bayes classifier $g_0(x)\equiv \bm{1}_{\eta(x)\geq 1/2}$ where $\eta(x)\equiv \E[Y|X=x]$, cf. \cite{devroye1996probabilistic}. In the setting of complex sampling design, it is natural to estimate $g_0$ by minimizing the weighted training error:
\begin{align*}
\hat{g}_N^\pi \equiv \arg\min_{g \in \mathcal{G}} \sum_{i=1}^N \frac{\xi_i}{\pi_i} \bm{1}_{Y_i\neq g(X_i)},
\end{align*}
where $g_0 \in \mathcal{G}$ is a collection of classifiers. To apply Theorem \ref{thm:M_estimation}, let $\mathcal{F}\equiv \{f_g\equiv\bm{1}_{y\neq g(x)}: g \in \mathcal{G}\}$. Suppose the following margin condition (cf. \cite{mammen1999smooth,tsybakov2004optimal}) holds for some $c>0,\kappa\geq 1$: for all $g \in \mathcal{G}$
\begin{align}
\mathcal{E}_P(g)\geq c \Pi^\kappa (g(X)\neq g_0(X)),
\end{align}
where $\Pi$ is the marginal law of $X$ under $P$. Following page 1212 of \cite{gine2006concentration}, we may choose $D(\delta)\lesssim \delta^{1/\kappa}$, and hence if the collection of classifiers $\mathcal{G}$ satisfies an entropy condition with exponent $\alpha \in (0,2)$, using $(f_{g_1}-f_{g_2})^2\leq (g_1-g_2)^2$, we see that $\mathcal{F}$ also satisfies the same entropy condition and hence 
\begin{align*}
P(Y\neq \hat{g}_N^\pi (X))-\inf_{g' \in \mathcal{G}} P(Y\neq g'(X))= \mathcal{O}_{\mathbf{P}}\big(N^{-\frac{\kappa}{2\kappa-1+\alpha/2}}\big),
\end{align*}
a very typical rate in the classification problem.
\end{example}

\subsection{$Z$-estimation}

The method of $Z$-estimation that produces estimators by finding those values of the parameters which zero out a set of estimating equations is well-understood by now under the usual empirical measure; see \cite{van2002semiparametric,van1996weak} for a comprehensive treatment. With the Horvitz-Thompson empirical measure, \cite{breslow2007weighted,breslow2008theorem,saegusa2018large,saegusa2013weighted} considered weighted likelihood estimation under stratified sampling designs, both with and without overlaps. The goal of this section is to give a unified theoretical treatment for the $Z$-estimation problem under general sampling designs.

Let $\hat{\theta}_N^\pi \in \Theta$ solve the (possibly infinite-dimensional) estimating equations based on weighted likelihood:
\begin{align*}
\Prob_N^\pi \psi_{\hat{\theta}_N^\pi,h} = 0,\quad \textrm{ for all }h \in \mathcal{H},
\end{align*}
while the `truth' $\theta_0 \in \Theta$ solves the population equations
\begin{align*}
P \psi_{\theta_0,h} = 0 ,\quad \textrm{ for all } h \in \mathcal{H}.
\end{align*}
Let $\Psi_N,\Psi:\Theta \to \ell^\infty(\mathcal{H})$ be given by $\Psi_N(\theta)(h)\equiv \Prob_N^\pi \psi_{\theta,h}$ and $\Psi(\theta)(h)\equiv P\psi_{\theta,h}$. We assume that $\mathcal{H}$ is countable without loss of generality.

\begin{theorem}\label{thm:Z_est}
Suppose that (A1) and (A2-CLT) hold, and that the following conditions hold.
\begin{enumerate}
\item[(Z1)] The map $\Psi$ is Fr\'echet differentiable at $\theta_0$ with a continuously invertible derivative  $\dot{\Psi}_{\theta_0}$.
\item[(Z2)] The stochastic equi-continuity condition holds:
\begin{align*}
\pnorm{\G_N(\psi_{\hat{\theta}_N^\pi,h}-\psi_{\theta_0,h})}{\mathcal{H}}= \mathfrak{o}_{\mathbf{P}}\big(1+\sqrt{N}\pnorm{\hat{\theta}_N^\pi-\theta_0}{}\big)
\end{align*}
and $\{\psi_{\theta_0,h}: h \in \mathcal{H}\}$ is a $P$-Glivenko-Cantelli class.
\end{enumerate}
If $\hat{\theta}_N^\pi\to_{\mathbf{P}}\theta_0$, then
\begin{align*}
\sqrt{N}(\hat{\theta}_N^\pi-\theta_0) = -\dot{\Psi}_{\theta_0}^{-1} \G_N^\pi \psi_{\theta_0,\cdot}+\mathfrak{o}_{\mathbf{P}}(1).
\end{align*}
\end{theorem}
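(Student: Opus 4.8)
The plan is to establish the asymptotic linearity of the $Z$-estimator $\hat{\theta}_N^\pi$ by the standard master-theorem strategy for $Z$-estimation (cf. Theorem 3.3.1 of \cite{van1996weak}), but carefully routing the stochastic terms through the Horvitz-Thompson empirical process $\G_N^\pi$ rather than the usual $\G_N$. First I would start from the defining relation $\Psi_N(\hat{\theta}_N^\pi)=\mathfrak{o}_{\mathbf{P}}(N^{-1/2})$ (in fact it is exactly $0$) together with $\Psi(\theta_0)=0$, and decompose
\begin{align*}
\sqrt{N}\big(\Psi_N(\hat{\theta}_N^\pi)-\Psi(\hat{\theta}_N^\pi)\big)-\sqrt{N}\big(\Psi_N(\theta_0)-\Psi(\theta_0)\big)=-\sqrt{N}\,\Psi(\hat{\theta}_N^\pi)-\sqrt{N}\,\Psi_N(\theta_0).
\end{align*}
Writing $\sqrt{N}(\Psi_N(\theta)-\Psi(\theta))(h)=\G_N^\pi\psi_{\theta,h}$, the left-hand side becomes the increment $\G_N^\pi(\psi_{\hat{\theta}_N^\pi,\cdot}-\psi_{\theta_0,\cdot})$, which I want to show is $\mathfrak{o}_{\mathbf{P}}(1+\sqrt{N}\|\hat{\theta}_N^\pi-\theta_0\|)$.

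The key bridge from the usual to the Horvitz-Thompson process is the decomposition $\G_N^\pi=\G_N+(\G_N^\pi-\G_N)$, where $(\G_N^\pi-\G_N)(f)=\frac{1}{\sqrt{N}}\sum_{i=1}^N(\xi_i/\pi_i-1)f(Y_i)$. The $\G_N$-part of the increment is controlled directly by the stochastic equi-continuity hypothesis (Z2). For the remainder part, I would invoke the conditional Donsker machinery: under (A1), (A2-CLT) and the $P$-Donsker property of the relevant class, the centered process $f\mapsto(\G_N^\pi-\G_N)(f)$ is asymptotically equicontinuous (this is precisely the content underlying Theorem \ref{thm:weak_convergence_HT} and Corollary \ref{cor:P_N_center_Donsker}, since $\G_N^\pi-\G_N=\G_N^\pi-\sqrt{N}(\Prob_N-P)$). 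Thus, provided the consistency $\hat{\theta}_N^\pi\to_{\mathbf{P}}\theta_0$ forces $\rho_P(\psi_{\hat{\theta}_N^\pi,\cdot},\psi_{\theta_0,\cdot})\to_{\mathbf{P}}0$ (which follows from the Glivenko-Cantelli part of (Z2) together with continuity of $\theta\mapsto\psi_{\theta,\cdot}$ implicit in the Fréchet differentiability (Z1)), the asymptotic equicontinuity gives $(\G_N^\pi-\G_N)(\psi_{\hat{\theta}_N^\pi,\cdot}-\psi_{\theta_0,\cdot})=\mathfrak{o}_{\mathbf{P}}(1)$ along shrinking neighborhoods, and combining the two pieces yields $\G_N^\pi(\psi_{\hat{\theta}_N^\pi,\cdot}-\psi_{\theta_0,\cdot})=\mathfrak{o}_{\mathbf{P}}(1+\sqrt{N}\|\hat{\theta}_N^\pi-\theta_0\|)$.

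Having handled the stochastic increment, the deterministic side is standard: by Fréchet differentiability (Z1), $\sqrt{N}\,\Psi(\hat{\theta}_N^\pi)=\sqrt{N}\,\dot{\Psi}_{\theta_0}(\hat{\theta}_N^\pi-\theta_0)+\mathfrak{o}_{\mathbf{P}}(\sqrt{N}\|\hat{\theta}_N^\pi-\theta_0\|)$, and $\sqrt{N}\,\Psi_N(\theta_0)=\G_N^\pi\psi_{\theta_0,\cdot}$, which is $\mathcal{O}_{\mathbf{P}}(1)$. Substituting these back and rearranging gives
\begin{align*}
\dot{\Psi}_{\theta_0}\sqrt{N}(\hat{\theta}_N^\pi-\theta_0)=-\G_N^\pi\psi_{\theta_0,\cdot}+\mathfrak{o}_{\mathbf{P}}\big(1+\sqrt{N}\|\hat{\theta}_N^\pi-\theta_0\|\big).
\end{align*}
Applying the continuous inverse $\dot{\Psi}_{\theta_0}^{-1}$ and a standard bootstrapping argument to absorb the $\sqrt{N}\|\hat{\theta}_N^\pi-\theta_0\|$ term on the right (first deduce $\sqrt{N}(\hat{\theta}_N^\pi-\theta_0)=\mathcal{O}_{\mathbf{P}}(1)$, then feed this back to upgrade the error to $\mathfrak{o}_{\mathbf{P}}(1)$) produces the claimed expansion $\sqrt{N}(\hat{\theta}_N^\pi-\theta_0)=-\dot{\Psi}_{\theta_0}^{-1}\G_N^\pi\psi_{\theta_0,\cdot}+\mathfrak{o}_{\mathbf{P}}(1)$. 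I expect the main obstacle to be rigorously justifying the asymptotic equicontinuity of the remainder process $(\G_N^\pi-\G_N)$ over a data-dependent shrinking neighborhood of $\theta_0$: one must verify that $\{\psi_{\theta,h}:\theta\text{ near }\theta_0,h\in\mathcal{H}\}$ is $P$-Donsker (so that Theorem \ref{thm:weak_convergence_HT} applies to the centered process) and that the localization radius $\rho_P(\psi_{\hat{\theta}_N^\pi,\cdot},\psi_{\theta_0,\cdot})$ genuinely tends to zero in probability, rather than merely assuming it.
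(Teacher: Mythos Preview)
Your overall skeleton matches the paper's: both adapt the master argument of Theorem~3.3.1 in \cite{van1996weak}, writing $\sqrt{N}\Psi(\hat\theta_N^\pi)=-\G_N^\pi\psi_{\theta_0,\cdot}+R_N$ with $R_N=\G_N^\pi(\psi_{\hat\theta_N^\pi,\cdot}-\psi_{\theta_0,\cdot})$, then using (Z1) to linearize the left side and a bootstrap to establish $\sqrt{N}$-consistency. The divergence is entirely in how $R_N$ is controlled, and your route has a genuine gap relative to the stated hypotheses.

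You propose to split $\G_N^\pi=\G_N+(\G_N^\pi-\G_N)$ and control the second increment via asymptotic equicontinuity of $f\mapsto(\G_N^\pi-\G_N)(f)$. As you yourself flag, this requires that $\{\psi_{\theta,h}:\theta\text{ near }\theta_0,\,h\in\mathcal{H}\}$ be $P$-Donsker and that $\theta\mapsto\psi_{\theta,\cdot}$ be $L_2(P)$-continuous at $\theta_0$. Neither is assumed: (Z2) only gives stochastic equicontinuity of $\G_N$ at the random point $\hat\theta_N^\pi$ plus Glivenko--Cantelli for the \emph{fixed} class $\{\psi_{\theta_0,h}:h\in\mathcal{H}\}$, and (Z1) concerns Fr\'echet differentiability of $\Psi=P\psi_{\cdot}$, not $L_2$-continuity of $\psi_\cdot$. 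So as written your argument cannot close without importing extra hypotheses.

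The paper avoids both issues by a different, more elementary trick. Writing $g_h=\psi_{\hat\theta_N^\pi,h}-\psi_{\theta_0,h}$, the piece that obstructs you is $Pg_h\cdot\frac{1}{\sqrt{N}}\sum_i(\xi_i/\pi_i-1)$, and the key observation is that $Pg_h=\Psi(\hat\theta_N^\pi)(h)$. Now feed the master identity back into itself: $\Psi(\hat\theta_N^\pi)=-(\Psi_N-\Psi)(\theta_0)+N^{-1/2}R_N=-\Prob_N^\pi\psi_{\theta_0,\cdot}+N^{-1/2}R_N$ (using $P\psi_{\theta_0,\cdot}=0$). Hence $\|Pg_h\|_{\mathcal{H}}\le\|\Prob_N^\pi\psi_{\theta_0,\cdot}\|_{\mathcal{H}}+N^{-1/2}\|R_N\|_{\mathcal{H}}$. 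The first term is $\|(\Prob_N^\pi-P)\psi_{\theta_0,\cdot}\|_{\mathcal{H}}=\mathfrak{o}_{\mathbf{P}}(1)$ by the Glivenko--Cantelli part of (Z2) together with Theorem~\ref{thm:ULLN_HT}; the second, multiplied by $|W_N|=\mathcal{O}_{\mathbf{P}}(1)$, yields a term $\mathcal{O}_{\mathbf{P}}(N^{-1/2})\|R_N\|_{\mathcal{H}}$ that is absorbed back into the left side. This self-referential substitution is what lets the paper get by with only the GC assumption on $\{\psi_{\theta_0,h}\}$ rather than a Donsker condition on the whole family --- and it is precisely the idea missing from your proposal.
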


This theorem is comparable to the standard $Z$-Theorem 3.3.1 in \cite{van1996weak}, but here we work in the context of $Z$-estimation under weighted likelihood. Note that our conditions are are almost identical to the standard $Z$-Theorem, many examples for which Theorem \ref{thm:Z_est} applies can be found in Section 3.3 of \cite{van1996weak} (see also \cite{van2002semiparametric,van1995efficiency}). In particular, (Z2) is imposed for the usual empirical process $\G_N$, and can be easily checked if a Donsker property for the class $\{\psi_{\theta,h}-\psi_{\theta_0,h}:\pnorm{\theta-\theta_0}{}\leq \delta, h \in \mathcal{H}\}$ holds. We omit these details here.

Now consider estimation of a finite-dimensional parameter in the presence of an infinite-dimensional nuisance parameter, i.e. estimation in a semi-parametric model. Following \cite{cheng2010bootstrap,ma2005robust}, we use the following general semi-parametric framework: Consider a model $\{P_{\theta,\eta}: (\theta,\eta)\in \R^d \times \mathcal{H}\}$, where $\mathcal{H}$ is an infinite dimensional Hilbert space with norm $\pnorm{\cdot}{\mathcal{H}}$. Suppose that the true parameter is $(\theta_0,\eta_0)$. An estimator $(\hat{\theta}_N^\pi,\hat{\eta}_N^\pi)$ of $(\theta_0,\eta_0)$ usually takes the form
\begin{align}\label{def:par_semi_generic}
(\hat{\theta}_N^\pi,\hat{\eta}_N^\pi):=\arg\sup \Prob_N^\pi m_{\theta,\eta},
\end{align}
where $m_{\theta,\eta}$ is often the log likelihood function (for $n=1$). However here we will work with a more general $Z$-estimation framework.

For any fixed $\eta \in \mathcal{H}$, let $\eta(t)$ be a smooth curve at $t=0$ with $\eta(0)=\eta$ and $a=(\partial/\partial t) \eta(t)\lvert_{t=0}$ for some $a \in \mathcal{H}$. Denote $\mathcal{A}\subset \mathcal{H}$ the collection for all such admissible $a$'s. Now let $m_\theta(\theta,\eta)=\partial_{\theta}m(\theta,\eta) \in \R^d$, $m_\eta(\theta,\eta)[a]=(\partial/\partial t)m(\theta,\eta(t))|_{t=0}$ with $\partial_t \eta(t)|_{t=0}=a \in \mathcal{A}$. The second derivatives can be defined in a similar fashion. Suppose further the following orthogonality condition hold: there exists $A^\ast=(a_1^\ast,\ldots,a_d^\ast) \in \mathcal{A}^d$ so that for any $A \in \mathcal{A}^d$, it holds that
\begin{align}\label{cond:semipara_orthogonal_0}
P_{\theta_0,\eta_0}\big(m_{\theta\eta}(\theta_0,\eta_0)[A]-m_{\eta\eta}[A^\ast][A]\big)=0.
\end{align}
This condition is commonly adopted in semi-parametric literature to handle the case when nuisance parameter is not $\sqrt{n}$-estimable; see, e.g., Condition 2, page 555 in \cite{huang1996efficient}\footnote{See also condition A3 in \cite{wellner2007two}, page 2138; condition (4) in \cite{ma2005robust}, page 196; condition (4) in \cite{cheng2010bootstrap}, page 2887.}. 

Define the \emph{efficient score function} $\tilde{m}(\theta,\eta)=m_\theta(\theta,\eta)-m_\eta(\theta,\eta)[A^\ast]$ (since if $m$ is the log likelihood function, $\tilde{m}$ typically becomes the efficient score function). Then (\ref{cond:semipara_orthogonal_0}) can be rewritten as following: for any $A \in \mathcal{A}^d$,
\begin{align}\label{cond:semipara_orthogonal}
P_{\theta_0,\eta_0} \tilde{m}_\eta (\theta_0,\eta_0)[A]=0.
\end{align}
We assume that the true parameter $(\theta_0,\eta_0)$ zeros out the population estimating equation:
\begin{align}\label{cond:no_bias}
P_{\theta_0,\eta_0} \tilde{m}(\theta_0,\eta_0) = 0.
\end{align}
To allow some flexibility in the framework, the estimators $(\hat{\theta}_N^\pi,\hat{\eta}_N^\pi)$ are assumed to approximately zero out the Horvitz-Thompson empirical estimating equation: 
\begin{align}\label{def:par_semi}
\Prob_N^\pi \tilde{m}(\hat{\theta}_N^\pi,\hat{\eta}_N^\pi) = \mathfrak{o}_{\mathbf{P}}(N^{-1/2}).
\end{align}
It is easy to see that the above condition is satisfied if (\ref{def:par_semi_generic}) holds. Note here our general condition also includes the case where $\hat{\eta}_N^\pi$ may depend on $\hat{\theta}_N^\pi$, e.g. profile likelihood estimation. 

\begin{theorem}\label{thm:semipara}
Suppose that (A1) holds, and that (\ref{cond:semipara_orthogonal})-(\ref{def:par_semi}) hold. Further assume the following conditions.
\begin{enumerate}
\item[(S1)] The matrix $
I_{\theta_0,\eta_0}\equiv -P_{\theta_0,\eta_0}\tilde{m}_\theta(\theta_0,\eta_0)\in\R^{d\times d}$
is non-singular.
\item[(S2)] $\pnorm{\hat{\theta}_N^\pi-\theta_0}{}\vee \pnorm{\hat{\eta}_N^\pi-\eta_0}{\mathcal{H}}= \mathcal{O}_{\mathbf{P}}(N^{-\beta})$ holds for some $\beta>1/4$.
\item[(S3)] The model is smooth in the sense that 
\begin{align*}
& \big\lVert P_{\theta_0,\eta_0}\big(\tilde{m}(\theta,\eta)-\tilde{m}(\theta_0,\eta_0)- \tilde{m}_\theta(\theta_0,\eta_0)(\theta-\theta_0)\big)\big\lVert_{}\\
&\qquad=\mathcal{O}\big(\pnorm{\theta-\theta_0}{}^2\vee \pnorm{\eta-\eta_0}{\mathcal{H}}^2\big)
\end{align*}
holds for $(\theta,\eta)$ close enough to $(\theta_0,\eta_0)$.
\item[(S4)] For any $C>0$,
\begin{align*}
\sup_{\pnorm{\theta-\theta_0}{}\vee \pnorm{\eta-\eta_0}{\mathcal{H}}\leq CN^{-\beta}}\abs{\G_N\big(\tilde{m}(\theta,\eta)-\tilde{m}(\theta_0,\eta_0)\big)}&=\mathfrak{o}_{\mathbf{P}}(1).
\end{align*}
\end{enumerate}
Then
\begin{align*}
\sqrt{N}(\hat{\theta}_N^\pi-\theta_0) = I_{\theta_0,\eta_0}^{-1} \G_N^\pi \tilde{m}(\theta_0,\eta_0)+\mathfrak{o}_{\mathbf{P}}(1).
\end{align*}
\end{theorem}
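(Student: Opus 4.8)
The plan is to follow the standard one-step linearization for semi-parametric $Z$-estimation (as in Theorem 3.3.1 of \cite{van1996weak}), transported to the Horvitz-Thompson process. Starting from the approximate estimating equation (\ref{def:par_semi}), I would multiply by $\sqrt{N}$ and split $\sqrt{N}\Prob_N^\pi = \sqrt{N}P + \G_N^\pi$, which gives
\[
\mathfrak{o}_{\mathbf{P}}(1) = \G_N^\pi \tilde{m}(\hat{\theta}_N^\pi,\hat{\eta}_N^\pi) + \sqrt{N}P\tilde{m}(\hat{\theta}_N^\pi,\hat{\eta}_N^\pi).
\]
The target is to show that the stochastic term equals $\G_N^\pi\tilde{m}(\theta_0,\eta_0)+\mathfrak{o}_{\mathbf{P}}(1)$ and the drift term equals $-I_{\theta_0,\eta_0}\sqrt{N}(\hat{\theta}_N^\pi-\theta_0)+\mathfrak{o}_{\mathbf{P}}(1)$; the conclusion then follows by solving the resulting identity and using the invertibility of $I_{\theta_0,\eta_0}$ granted by (S1).

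For the drift term I would Taylor-expand the population functional via (S3). Since (\ref{cond:no_bias}) gives $P\tilde{m}(\theta_0,\eta_0)=0$ and $-P\tilde{m}_\theta(\theta_0,\eta_0)=I_{\theta_0,\eta_0}$, and since the absence of a first-order term in the nuisance direction in (S3) is precisely what the orthogonality condition (\ref{cond:semipara_orthogonal}) secures, condition (S3) yields
\[
\sqrt{N}P\tilde{m}(\hat{\theta}_N^\pi,\hat{\eta}_N^\pi) = -I_{\theta_0,\eta_0}\sqrt{N}(\hat{\theta}_N^\pi-\theta_0) + \sqrt{N}\cdot\mathcal{O}_{\mathbf{P}}\big(\pnorm{\hat{\theta}_N^\pi-\theta_0}{}^2\vee \pnorm{\hat{\eta}_N^\pi-\eta_0}{\mathcal{H}}^2\big).
\]
By the rate in (S2), the remainder is $\mathcal{O}_{\mathbf{P}}(N^{1/2-2\beta})$, which is $\mathfrak{o}_{\mathbf{P}}(1)$ exactly because $\beta>1/4$; this is the structural role of the $1/4$ threshold.

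The crux, and the step I expect to be the main obstacle, is the asymptotic equicontinuity of the \emph{Horvitz-Thompson} process, $\G_N^\pi\big(\tilde{m}(\hat{\theta}_N^\pi,\hat{\eta}_N^\pi)-\tilde{m}(\theta_0,\eta_0)\big)=\mathfrak{o}_{\mathbf{P}}(1)$, because the equicontinuity hypothesis (S4) is stated only for the ordinary process $\G_N$. I would handle this by writing $\G_N^\pi = \G_N + (\G_N^\pi-\G_N)$, where $(\G_N^\pi-\G_N)(g)=N^{-1/2}\sum_{i=1}^N(\xi_i/\pi_i-1)g(Y_i)$. The $\G_N$-part is controlled directly by (S4) once the rate (S2) is used to confine $(\hat{\theta}_N^\pi,\hat{\eta}_N^\pi)$ to the shrinking neighborhood $\pnorm{\theta-\theta_0}{}\vee\pnorm{\eta-\eta_0}{\mathcal{H}}\leq CN^{-\beta}$ with high probability. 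The weighted remainder is where the Horvitz-Thompson-specific input genuinely enters: conditionally on $(Y^{(N)},Z^{(N)})$ the multipliers $\{\xi_i/\pi_i-1\}$ are mean-zero and independent of the sample, so I would invoke the multiplier inequality of \cite{han2017sharp} to dominate the local modulus of the weighted sum by the corresponding symmetrized modulus of the empirical process uniformly over the shrinking ball, regardless of the dependence among the weights. Equivalently, this weighted equicontinuity is delivered by the local asymptotic modulus bound (Theorem \ref{thm:local_asymp_moduli}) applied to the score class $\{\tilde{m}(\theta,\eta)\}$, whose local $L_2(P)$-radius shrinks at the rate of (S2); together with the Donsker-transfer mechanism underlying Theorem \ref{thm:weak_convergence_HT}, this reduces the $\G_N^\pi$-equicontinuity to the stated $\G_N$-equicontinuity and closes the argument.
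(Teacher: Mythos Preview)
Your outline is essentially the paper's proof: start from (\ref{def:par_semi}), split $\sqrt{N}\Prob_N^\pi=\sqrt{N}P+\G_N^\pi$, linearize the drift via (S3) and (S2) (this is where $\beta>1/4$ is used, exactly as you say), and reduce the Horvitz--Thompson equicontinuity $\G_N^\pi\big(\tilde m(\hat\theta_N^\pi,\hat\eta_N^\pi)-\tilde m(\theta_0,\eta_0)\big)=\mathfrak{o}_{\mathbf P}(1)$ to the ordinary one in (S4). The paper records only the last step as ``by (S2), (S4)'' but the mechanism is the one you describe.

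Two points deserve tightening. First, your appeal to Theorem~\ref{thm:local_asymp_moduli} and the Donsker-transfer machinery of Theorem~\ref{thm:weak_convergence_HT} is both unnecessary and formally inapplicable: those results require (A2-CLT) and an $\alpha$-entropy condition on the score class, neither of which is assumed here. The correct (and lighter) tool is Proposition~\ref{prop:multiplier_ineq} applied directly to $\tilde\G_N^\pi(f)=N^{-1/2}\sum_i(\xi_i/\pi_i)(f(Y_i)-Pf)$ over the shrinking class, giving $\E\sup|\tilde\G_N^\pi|\lesssim \E\sup|\G_N|=\mathfrak{o}(1)$ by (S4); this needs only (A1) and no entropy hypothesis. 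Your decomposition $\G_N^\pi=\G_N+(\G_N^\pi-\G_N)$ is equivalent after recentering, since the multiplier inequality requires the summands $g(Y_i)$ to be $P$-centered for the L\'evy-type maximal step to apply.

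Second, that recentering leaves the residual $P\big(\tilde m(\hat\theta_N^\pi,\hat\eta_N^\pi)-\tilde m(\theta_0,\eta_0)\big)\cdot N^{-1/2}\sum_i(\xi_i/\pi_i-1)$. By (S3) and (S2) the first factor is $\mathcal O_{\mathbf P}(N^{-\beta})$, not $\mathcal O_{\mathbf P}(N^{-2\beta})$, because the linear $\theta$-term survives; under (A1) alone the second factor is only $\mathcal O_{\mathbf P}(\sqrt N)$, so the product need not vanish for $\beta\in(1/4,1/2]$. The paper's proof is silent here, and your route through Theorem~\ref{thm:local_asymp_moduli} quietly imports (A2-CLT) to make $N^{-1/2}\sum_i(\xi_i/\pi_i-1)=\mathcal O_{\mathbf P}(1)$. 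In short, the argument is the paper's, but to make this step honest one should either add (A2-CLT) as in Theorem~\ref{thm:Z_est} or strengthen (S2) to $\beta>1/2$; with either amendment your proof is complete.
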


Conditions (S1)-(S4) are all standard assumptions in semi-parametric literature, and can be verified in numerous models, including the Cox model with right censored/current status data, partially linear model, panel count data (with covariates) etc. Here we only consider the partially linear model; detailed verifications for other models can be found in, e.g.  \cite{cheng2010bootstrap,ma2005robust,saegusa2018large,saegusa2013weighted,wellner2007two}.

\begin{example}[Partially linear model]
Consider the following model
\begin{align*}
Y_i = X_i^\top \theta_0 + f_0(W_i)+e_i,\quad  i=1,\ldots, N,
\end{align*}
where $Y_i$'s are the responses, $\{(X_i,W_i)\in [-1,1]^d\times [0,1] \}$'s are i.i.d. covariates, and $e_i$'s are i.i.d. normal errors independent of the covariates. The `true signal' $\theta_0 \in \R^d$ and $f_0:[0,1]\to \R$ is a `smooth' function. For ease of exposition we will consider the parameter space $
\Xi \equiv\{(\theta,f): \pnorm{\theta}{1} \leq 1, \pnorm{f}{\infty}\leq 1, J(f)\leq M\}$ for some $M>0$, and here $J^2(f):=\int_0^1 (f^{''}(t))^2\ \d{t}$. Now with $\lambda_N\asymp N^{-2/5}$, let
\begin{align}\label{def:partial_linear_par}
(\hat{\theta}_N^\pi,\hat{f}_N^\pi) :=\arg\min_{(\theta,f) \in \Xi} \bigg[\Prob_N^\pi \big(Y-X^\top \theta-f(W)\big)^2
+ \lambda_N^2 J^2(f)\bigg].
\end{align}
To put the model into our framework, let $
m(\theta,f):=-(y-x^\top \theta-f(w))^2$. Then for any admissible $a,b$, we have 
\begin{align*}
&m_\theta(\theta,f)= 2x(y-x^\top\theta-f(w)),& &m_f(\theta,f)[a] = 2a(w) (y-x^\top\theta -f(w)),\\
&m_{\theta f}(\theta,f)[b] = -2xb(w),& & m_{ff}(\theta,f)[a][b] = -2a(w)b(w).
\end{align*}
Now let $
A^\ast(W) =\E[X|W] \in \R^d$. Then a direct calculation verifies (\ref{cond:semipara_orthogonal_0}). Thus we can take
\begin{align}\label{def:mtilde_par_linear}
\tilde{m}(\theta,f) = 2(y-x^\top \theta-f(w)) (x-\E[X|W=w]).
\end{align}
(\ref{cond:no_bias}) is immediately verified; (\ref{def:par_semi}) can also be verified by taking partial derivatives in the definition (\ref{def:partial_linear_par}) and noting that $\lambda_N^2=\mathfrak{o}(N^{-1/2})$. Now we verify (S1)-(S4). (S1) will be satisfied if the matrix $ I_{\theta_0,\eta_0}\equiv 2\E \big[(X-\E[X|W])X^\top]=2\E \big[(X-\E[X|W])^{\otimes 2}]$ is non-singular. (S2) can be verified with $\beta = 2/5$ along the lines of Lemma 25.88 in \cite{van2000asymptotic} with the tools developed in Section \ref{section:theory}. (S3) is trivially satisfied since $\tilde{m}$ is linear in $\theta$ and $f$. (S4) is also easy to verify. Hence we have shown that under the same conditions as in Lemma 25.88 of \cite{van2000asymptotic},
\begin{align*}
\sqrt{N}(\hat{\theta}_N^\pi-\theta_0) = I_{\theta_0,\eta_0}^{-1} \G_N^\pi \tilde{m}(\theta_0,\eta_0)+\mathfrak{o}_{\mathbf{P}}(1).
\end{align*}

\end{example}

\subsection{Frequentist theory for Bayesian procedures}

Suppose the i.i.d. super-population variables of interest $\{Y_i\}_{i=1}^N$ have law $P_{f_0}$ where $f_0$ belongs to a statistical model $\mathcal{F}$ and $\{P_f\}_{f \in \mathcal{F}}$ is dominated by a $\sigma$-finite measure $\mu$. A Bayesian approach assigns a prior $\Pi_N$ on the model $\mathcal{F}$ and makes estimation/inference based on the posterior distribution. In the case where all the super-population $\{Y_i\}_{i=1}^N$ are available, by Bayes' formula, the posterior distribution, i.e. a random measure on $\mathcal{F}$, is defined as follows: for a measurable subset $B \subset \mathcal{F}$, 
\begin{align}\label{eqn:post_dist_usual}
\Pi_N (B|Y^{(N)})& \equiv \frac{\int_B  \prod_{i=1}^N p_f(Y_i)\ \d{\Pi_N(f)}}{\int \prod_{i=1}^N p_f(Y_i)\ \d{\Pi_N(f)}}=\frac{\int_B  \exp \big(N\Prob_N \log p_f\big)\ \d{\Pi_N(f)}}{\int \exp \big(N\Prob_N \log p_f\big)\ \d{\Pi_N(f)}},
\end{align}
where $p_f(\cdot)$ denotes the probability density function of $P_f$ with respect to the dominating measure $\mu$.

In the current super-population setup with complex sampling designs, we may naturally replace the usual empirical measure $\Prob_N$ in (\ref{eqn:post_dist_usual}) by the Horvitz-Thompson empirical measure $\Prob_N^\pi$ to define the \emph{posterior distribution with weighted likelihood} as follows:  for a measurable subset $B \subset \mathcal{F}$, 
\begin{align}\label{eqn:post_dist}
\Pi_N^\pi (B|D^{(N)})& \equiv \frac{\int_B  \prod_{i=1}^N p_f(Y_i)^{\xi_i/\pi_i}\ \d{\Pi_N(f)}}{\int \prod_{i=1}^N p_f(Y_i)^{\xi_i/\pi_i}\ \d{\Pi_N(f)}}=\frac{\int_B  \exp \big(N\Prob_N^\pi \log p_f\big)\ \d{\Pi_N(f)}}{\int \exp \big(N\Prob_N^\pi \log p_f\big)\ \d{\Pi_N(f)}}.
\end{align}
Recall here $D^{(N)}\equiv (Y^{(N)},Z^{(N)},\xi^{(N)},\pi^{(N)})$. As we will see below, one particular advantage of the posterior distribution with weighted likelihood defined above is that we may obtain a complete frequentist theory for Bayes procedures analogous to that based on observing the whole super-population $\{Y_i\}_{i=1}^N$.

We say that the posterior distribution with weighted likelihood, namely $\Pi_N^\pi (\cdot|D^{(N)})$, contracts at a rate $\delta_N$ with respect to a metric $d$ if
\begin{align*}
&P_{f_0} \Pi_N^\pi \big(f \in \mathcal{F}: d^2(f,f_0)>L_N \delta_N^2\big \lvert D^{(N)}\big)\to 0
\end{align*}
for any $L_N\to \infty$. 

Our first goal in this section is to develop some useful results in deriving such posterior contraction rates for the posterior distribution using weighted likelihood. We will use (essentially the same) machinery developed in \cite{han2017bayes} (which we find easier to adapt in the current context than the standard machinery \cite{ghosal2000convergence,ghosal2007convergence}). For some $v>0,c\in [0,\infty)$ let
\begin{align*}
\psi_{v,c}(\lambda)=v\lambda^2\cdot \bm{1}_{\abs{\lambda}\leq 1/c}+ \infty\cdot \bm{1}_{\abs{\lambda}>1/c}
\end{align*}
denote the local quadratic function. 

\begin{theorem}\label{thm:bayes}
Suppose (A1) holds and the following conditions hold:
\begin{enumerate}
\item[(B1)] (Local Gaussianity condition) There exist some constants $c_1>0$ and $\kappa=(\kappa_g,\kappa_\Gamma) \in (0,\infty)\times [0,\infty)$ such that for all $n \in \N$, and $f_0,f_1 \in \mathcal{F}$, 
\begin{align*}
P_{f_0} \exp\left[\lambda \left(\log \frac{p_{f_0}}{p_{f_1}}-P_{f_0}\log \frac{p_{f_0}}{p_{f_1}}\right)\right]\leq c_1 \exp\left[\psi_{\kappa_g d^2(f_0,f_1),\kappa_\Gamma }(\lambda)\right]
\end{align*}
Here  $d:\mathcal{F}\times \mathcal{F}\to \R_{\geq 0}$ is a symmetric function satisfying
\begin{align*}
c_2\cdot  d^2(f_0,f_1)\leq P_{f_0}\log \frac{p_{f_0}}{p_{f_1}}\leq c_3 \cdot d^2(f_0,f_1)
\end{align*}
for some constants $c_2,c_3>0$.
\item[(B2)] (Local entropy condition) There exist some $\{\delta_N\}_{N\in \N}$ such that
\begin{align*}
1+\sup_{\epsilon>\delta_{N}}\log \mathcal{N}\big(c_5 \epsilon , \{f \in \mathcal{F}: d(f,f_0)\leq 2\epsilon\}, d\big)\leq c_4 N \delta_{N}^2
\end{align*}
where $c_4 \in (0,1),c_5 \in (0,1/4)$ depend on $\{c_i\}_{i=1}^3$.
\item[(B3)] (Prior mass condition) For all $j \in \N$,
\begin{align*}
\frac{\Pi_N\big(\{f\in \mathcal{F}: j\delta_N<d(f,f_0)\leq (j+1) \delta_N\}\big)}{\Pi_N\big(d(f,f_0)\leq\delta_N\big)}\leq \exp(c_6j^2N\delta_{N}^2),
\end{align*}
where $c_6>0$ is a small enough constant depending on $\{c_i\}_{i=1}^3$.
\end{enumerate}
Then
\begin{align*}
P_{f_0} \Pi_N^\pi \big(f \in \mathcal{F}: d^2(f,f_0)>C_1 \delta_N^2\big \lvert D^{(N)} \big)\leq C_2 \exp(-N\delta_N^2/C_2).
\end{align*}
Here $C_1,C_2>0$ only depend on $\{c_i\}_{i=1}^3$ and $\kappa$.
\end{theorem}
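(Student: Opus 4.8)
The plan is to follow the two-pronged ``evidence versus numerator'' route to posterior contraction in the streamlined form of \cite{han2017bayes}, the only genuinely new ingredient being the Horvitz--Thompson weights $w_i\equiv \xi_i/\pi_i$. Writing $g_f\equiv \log(p_{f_0}/p_f)$ so that $N\Prob_N^\pi \log(p_f/p_{f_0})=-N\Prob_N^\pi g_f$, and cancelling the common factor $\exp(N\Prob_N^\pi\log p_{f_0})$ in (\ref{eqn:post_dist}), the posterior mass of the bad set $\mathcal{B}\equiv\{f: d^2(f,f_0)>C_1\delta_N^2\}$ is $\Pi_N^\pi(\mathcal{B}\,|\,D^{(N)})=U_N/V_N$, where $U_N\equiv\int_{\mathcal{B}}\exp(-N\Prob_N^\pi g_f)\,\d{\Pi_N(f)}$ and $V_N\equiv\int_{\mathcal{F}}\exp(-N\Prob_N^\pi g_f)\,\d{\Pi_N(f)}$. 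Throughout, the weights are handled by conditioning on $(Z^{(N)},\xi^{(N)})$: given these, the $\{g_f(Y_i)\}_i$ remain independent while the $\{w_i\}$ are frozen constants with $0\le w_i\le \pi_0^{-1}$ by (A1), so that $\sum_i w_i^2\le \pi_0^{-2}N$ deterministically. This surely-bounded weight energy is exactly why (A1) alone, without (A2), suffices.

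First I would lower bound the denominator. Restricting to $B_0\equiv\{d(f,f_0)\le\delta_N\}$ and applying Jensen's inequality to the prior renormalised on $B_0$ reduces the problem to controlling $-N\Prob_N^\pi\bar g$, where $\bar g\equiv \Pi_N(B_0)^{-1}\int_{B_0} g_f\,\d{\Pi_N}$. Its conditional mean $-(\sum_i w_i) P_{f_0}\bar g$ is bounded below by $-\pi_0^{-1}c_3 N\delta_N^2$ using $P_{f_0}g_f\le c_3 d^2\le c_3\delta_N^2$ on $B_0$; the fluctuation $\sum_i w_i(\bar g(Y_i)-P_{f_0}\bar g)$ is controlled by the local Gaussianity bound (B1), which passes from individual $g_f$ to the average $\bar g$ by convexity of the exponential, and then to the weighted sum conditionally on the weights through a Chernoff bound with free parameter $\theta\le \pi_0/\kappa_\Gamma$ kept inside the sub-Gaussian regime of $\psi$. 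This yields $V_N\ge \Pi_N(B_0)\exp(-KN\delta_N^2)$ off an event of $P_{f_0}$-probability $\le C\exp(-N\delta_N^2/C)$.

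Next I would bound $\E_{f_0}U_N$ by peeling $\mathcal{B}$ into shells $S_j\equiv\{j\delta_N<d(f,f_0)\le(j+1)\delta_N\}$, $j\ge\sqrt{C_1}$. On $S_j$ the deterministic drift obeys $-NP_{f_0}g_f\le -c_2 j^2 N\delta_N^2$, so the issue is whether the weighted fluctuation $f\mapsto -\sum_i w_i(g_f(Y_i)-P_{f_0}g_f)$ can overcome this drift uniformly over $S_j$. I would cover $S_j$ at scale $\asymp j\delta_N$ using the local entropy bound (B2), which supplies at most $\exp(c_4 N\delta_N^2)$ centres; at each centre the conditional Chernoff bound from (B1) (again with $\theta\le\pi_0/\kappa_\Gamma$, and with variance proxy $\lesssim \kappa_g d^2\sum_i w_i^2\lesssim \kappa_g d^2\pi_0^{-2}N$) controls the fluctuation, and a union bound gives a per-shell contribution $\lesssim \Pi_N(S_j)\exp(-c_2' j^2 N\delta_N^2+c_4 N\delta_N^2)$ for a constant $c_2'>0$ depending on $\pi_0,\kappa_g,\kappa_\Gamma,c_2$. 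Invoking (B3) to replace $\Pi_N(S_j)$ by $\Pi_N(B_0)\exp(c_6 j^2 N\delta_N^2)$ and taking $c_4,c_5,c_6$ small (as their dependence on $\{c_i\}$ permits) leaves exponents $-(c_2'-c_6)j^2 N\delta_N^2+c_4 N\delta_N^2$, summable in $j$, whence $\E_{f_0}U_N\le \Pi_N(B_0)\exp(-K'N\delta_N^2)$ with $K'>K$.

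Finally, on the high-probability denominator event, Markov's inequality on $U_N$ gives $\Pi_N^\pi(\mathcal{B}\,|\,D^{(N)})\le\exp\big(-\tfrac12(K'-K)N\delta_N^2\big)$ off an event of probability $\le C\exp(-N\delta_N^2/C)$, and after relabelling constants this is the asserted $C_2\exp(-N\delta_N^2/C_2)$. The main obstacle is the uniform shell control in the numerator: because the increments carry the weights $w_i$, which (A1) bounds only by $\pi_0^{-1}$, the chaining/union-bound argument must be run conditionally on the weights and the Chernoff parameter $\theta$ must be capped at $\pi_0/\kappa_\Gamma$ to stay in the quadratic regime of $\psi$ while the weight energy $\sum_i w_i^2\le\pi_0^{-2}N$ inflates the variance proxy. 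Balancing this against the quadratic-in-$j$ drift, and absorbing the leading constant of (B1) into the variance proxy via the standard self-improvement of sub-Gaussian bounds for centred variables, so that the geometric series in $j$ converges, is the delicate point where (A1), (B1), (B2) and (B3) must be tuned together.
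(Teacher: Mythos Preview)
Your overall architecture --- lower bound the evidence on a small ball, peel the numerator into shells, control each shell by local entropy + sub-Gaussian tail, then sum --- is exactly the route the paper takes, since the paper simply verifies the local Gaussianity condition of \cite{han2017bayes} for the weighted product density $p_f^{(N)}=\prod_i p_f(Y_i)^{\xi_i/\pi_i}$ and then invokes that paper's test construction (Lemma~1) and evidence lower bound (Lemma~12) as black boxes. The one substantive difference is \emph{how} the weights are handled: the paper does not condition on $(Z^{(N)},\xi^{(N)})$ at all, but instead expands the MGF as a power series and applies the multiplier inequality (Proposition~\ref{prop:multiplier_ineq}) term by term to get
\[
\E\Big|\sum_i w_i\big(g_f(Y_i)-P_{f_0}g_f\big)\Big|^p\;\le\;(C/\pi_0)^p\,\E\Big|\sum_i\big(g_f(Y_i)-P_{f_0}g_f\big)\Big|^p,
\]
thereby reducing the weighted sub-Gaussian bound to the unweighted one in a single stroke. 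This is cleaner than your conditional Chernoff route and sidesteps the need to track $\sum_i w_i^2$ explicitly.

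There is, however, a genuine gap in your numerator step. You write the drift as $-NP_{f_0}g_f\le -c_2 j^2 N\delta_N^2$, but your own decomposition into ``drift plus weighted fluctuation $-\sum_i w_i(g_f(Y_i)-P_{f_0}g_f)$'' forces the drift to be $-\big(\sum_i w_i\big)P_{f_0}g_f$, not $-NP_{f_0}g_f$. Under (A1) alone, $\sum_i w_i$ has no deterministic lower bound and no concentration (consider the degenerate design where all $\xi_i$ are equal, each value with probability $1/2$: then $\sum_i w_i=0$ with probability $1/2$), so the ``drift versus fluctuation'' balance you describe cannot be run conditionally. The paper's multiplier-inequality approach avoids conditioning and works with the \emph{unconditional} centering; if you want to salvage your conditional route you would need to replace the conditional Chernoff step by an unconditional moment bound of the kind Proposition~\ref{prop:multiplier_ineq} provides.
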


The local Gaussianity condition (B1) can be easily verified in a wide range of experiments including regression/density estimation/Gaussian autoregression/Gaussian time series/covariance matrix estimation, etc. (B2)-(B3) are standard conditions in the literature. In particular, (B3) allows the exact $\sqrt{N}$ parametric posterior contraction rate, which will be useful below. It is also possible to consider hierarchical priors to formulate a similar theorem as in \cite{han2017bayes}---in essence all examples therein can be applied here (except for regression where random design instead of fixed design is needed to maintain the i.i.d. property of the super-population $\{Y_i\}_{i=1}^N$). Although we refer the readers to \cite{han2017bayes} for more details and examples, we will demonstrate one example below for the convenience of the reader.
\begin{example}
Consider the covariance matrix estimation problem: suppose $Y_1,\ldots,Y_N \in \R^d$ are i.i.d. observations from $\mathcal{N}_d(0,\Sigma_0)$ where $\Sigma_0 \in \mathscr{S}_d(L)$, the set of $d\times d$ covariance matrices whose minimal and maximal eigenvalues are bounded by $L^{-1}$ and $L$, respectively. The covariance matrix is modeled by the sparse factor model $\mathfrak{M}\equiv \cup_{(k,s)\in \N^2 }\mathfrak{M}_{(k,s)}$ where $\mathfrak{M}_{(k,s)}\equiv \{\Sigma=\Lambda \Lambda^\top +I : \Lambda \in \mathscr{R}_{(k,s)}(L)\}$ with $\mathscr{R}_{(k,s)}(L)\equiv \{\Lambda\in \R^{d\times k}, \Lambda_{\cdot j} \in B_0(s),  \abs{\sigma_j(\Lambda)}\leq L^{1/2},\textrm{ for all } 1\leq j\leq k\}$. 

Suppose we use a hierarchical prior $\Pi_N=\sum_{(k,s) \in \N^2} \lambda_{N}((k,s)) \Pi_{N,(k,s)}$ with the same model selection priors $\{\lambda_{N}((k,s))\}_{(k,s) \in \N^2}$ and the sieve priors $\{\Pi_{N,(k,s)}\}_{(k,s) \in \N^2}$ specified as in \cite{han2017bayes}, then
\begin{align*}
 P_{\Sigma_0} \Pi_N^\pi\bigg(\Sigma \in \mathfrak{M}: \pnorm{\Sigma-\Sigma_0}{F}^2> C_1 \frac{ks\log(e d)}{N}\big\lvert D^{(N)}\bigg)  \leq C_2 \exp\left(-ks(\log ed)/C_2\right).
\end{align*}
Here $\pnorm{\cdot}{F}$ denotes the matrix Frobenius norm.

\end{example}

Next we will be interested in a more precise limiting distribution of the posterior distribution with weighted likelihood, i.e. a Bernstein-von Mises type theorem. To this end, we work with a finite-dimensional model $ \Theta$ being a compact subset of $\R^d$. Let $\theta_0 \in \Theta$, an interior point of $\Theta$, be the true signal. Let $\mathcal{N}_{\mu,\Sigma}$ denote the $d$-dimensional normal distribution with mean $\mu$ and covariance matrix $\Sigma$. 

\begin{theorem}\label{thm:bvm}
Suppose that (A1) and (A2-CLT) hold. Further assume the following conditions.
\begin{enumerate}
\item[(Bv1)] (Experiment) The map $\theta \mapsto \log p_{\theta}(x)=\ell_\theta(x)$ is differentiable at $\theta_0$ for all $x$ with derivative $\dot{\ell}_{\theta_0}(x)$, and for $\theta_1,\theta_2$ close enough to $\theta$,
\begin{align*}
\abs{\ell_{\theta_1}(x)-\ell_{\theta_2}(x)}\leq m(x)\pnorm{\theta_1-\theta_2}{}
\end{align*}
holds for some $P_{\theta_0}$-square integrable function $m$. Furthermore, the log-likelihood ratio $\big\{\log \frac{p_\theta}{p_{\theta_0}}\big\}_{\theta \in \Theta}$ satisfies the local Gaussianity condition, and is twice differentiable under $P_{\theta_0}$ with a non-singular Hessian $I_{\theta_0}$: for $\theta$ close enough to $\theta_0$,
\begin{align*}
P_{\theta_0}\log \frac{p_{\theta_0}}{p_\theta} = \frac{1}{2} (\theta-\theta_0)I_{\theta_0} (\theta-\theta_0)+\mathfrak{o}\big(\pnorm{\theta-\theta_0}{}^2\big).
\end{align*}
\item[(Bv2)] (Prior) The prior $\Pi$ has a Lebesgue density bounded away from $0$ and $\infty$ on $\Theta$.
\end{enumerate}
Then the posterior distribution with weighted likelihood $\Pi_N^\pi$ converges to a sequence of normal distributions in the total variational distance:
\begin{align*}
\sup_{B}\bigabs{\Pi_N^\pi \big( \sqrt{N}(\theta-\theta_0) \in B\big\lvert D^{(N)}\big)-\mathcal{N}_{I_{\theta_0}^{-1}\G_N^\pi \dot{\ell}_{\theta_0}, I_{\theta_0}^{-1}}(B) } = \mathfrak{o}_{\mathbf{P}}(1).
\end{align*}
\end{theorem}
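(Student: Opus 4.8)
The plan is to place the statement within the classical Le Cam--van der Vaart Bernstein--von Mises scheme (cf. Chapter 10 of \cite{van2000asymptotic}), where the Horvitz--Thompson weighting enters only through the centering of a local asymptotic normality (LAN) expansion, replacing $\G_N$ by $\G_N^\pi$. Reparametrizing by the local coordinate $h\equiv\sqrt N(\theta-\theta_0)$, the posterior law of $h$ has a Lebesgue density proportional to
\begin{align*}
h\mapsto \exp\big(N\Prob_N^\pi(\ell_{\theta_0+h/\sqrt N}-\ell_{\theta_0})\big)\,\varpi(\theta_0+h/\sqrt N),
\end{align*}
where $\varpi$ is the prior density from (Bv2). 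Granting (i) a weighted LAN expansion valid uniformly on compact sets in $h$ and (ii) a posterior contraction statement confining mass to $\pnorm{h}{}\leq M_N$ for some $M_N\to\infty$, total-variation convergence to $\mathcal N_{I_{\theta_0}^{-1}\G_N^\pi\dot{\ell}_{\theta_0},\,I_{\theta_0}^{-1}}$ follows from the standard argument: on compacts the density converges, after renormalization, to the Gaussian density $\propto\exp(h^\top\G_N^\pi\dot{\ell}_{\theta_0}-\tfrac12 h^\top I_{\theta_0}h)$, the prior factor tends to $\varpi(\theta_0)$ by continuity, and Scheffé's lemma together with a dominating integrable envelope (controlled through (ii)) upgrades pointwise convergence of densities to convergence in total variation.

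For ingredient (ii) I would invoke Theorem \ref{thm:bayes} with $d$ the Euclidean metric on the compact $\Theta\subset\R^d$. The local Gaussianity assumption in (Bv1) supplies (B1), while the Hessian expansion $P_{\theta_0}\log(p_{\theta_0}/p_\theta)=\tfrac12(\theta-\theta_0)^\top I_{\theta_0}(\theta-\theta_0)+\mathfrak o(\pnorm{\theta-\theta_0}{}^2)$ makes $d^2$ comparable to the Kullback--Leibler divergence near $\theta_0$; (B2) holds with $\delta_N\asymp N^{-1/2}$ since a compact subset of $\R^d$ has local entropy $\lesssim d\log(1/\epsilon)$; and (B3) is immediate from the two-sided bound on $\varpi$ in (Bv2). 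Theorem \ref{thm:bayes} then gives $P_{\theta_0}\Pi_N^\pi(\pnorm{\theta-\theta_0}{}>M_N N^{-1/2}\mid D^{(N)})=\mathfrak o_{\mathbf P}(1)$ for any $M_N\to\infty$, which both restricts the LAN analysis to $\pnorm{h}{}\le M_N$ and furnishes the tail control in the Scheffé step.

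The core is the weighted LAN expansion, which I would read off from the decomposition
\begin{align*}
N\Prob_N^\pi(\ell_\theta-\ell_{\theta_0}) = N\Prob_N(\ell_\theta-\ell_{\theta_0})+\sqrt N\,(\G_N^\pi-\G_N)(\ell_\theta-\ell_{\theta_0}),
\end{align*}
using $\sqrt N(\Prob_N^\pi-\Prob_N)=\G_N^\pi-\G_N$. The first summand is the classical LAN expansion $h^\top\G_N\dot{\ell}_{\theta_0}-\tfrac12 h^\top I_{\theta_0}h+\mathfrak o_{\mathbf P}(1)$, valid uniformly on compacts under the differentiability and Lipschitz conditions ($\abs{\ell_{\theta_1}-\ell_{\theta_2}}\le m\pnorm{\theta_1-\theta_2}{}$, $P_{\theta_0}m^2<\infty$) and the Hessian expansion of (Bv1). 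Linearizing $\ell_\theta-\ell_{\theta_0}=(\theta-\theta_0)^\top\dot{\ell}_{\theta_0}+r_\theta$ in the second summand, the linear contributions combine algebraically into $h^\top\G_N^\pi\dot{\ell}_{\theta_0}$ (the zero-mean property $P_{\theta_0}\dot{\ell}_{\theta_0}=0$ being what identifies the classical score term $\G_N\dot{\ell}_{\theta_0}$), while the quadratic term $-\tfrac12 h^\top I_{\theta_0}h$ is left untouched; thus the design enters only through the centering, not through the conditional covariance $I_{\theta_0}^{-1}$. This yields $N\Prob_N^\pi(\ell_\theta-\ell_{\theta_0})=h^\top\G_N^\pi\dot{\ell}_{\theta_0}-\tfrac12 h^\top I_{\theta_0}h+\mathfrak o_{\mathbf P}(1)$, with the centering $\G_N^\pi\dot{\ell}_{\theta_0}$ stochastically bounded under the stated hypotheses.

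The step I expect to be the main obstacle is the uniform negligibility of the linearization remainder
\begin{align*}
\sup_{\pnorm{h}{}\le M}\bigabs{\G_N^\pi\phi_{N,h}}=\mathfrak o_{\mathbf P}(1),\qquad \phi_{N,h}\equiv\sqrt N(\ell_{\theta_0+h/\sqrt N}-\ell_{\theta_0})-h^\top\dot{\ell}_{\theta_0},
\end{align*}
together with its analogue for $\G_N$. The differentiability and $L_2$-Lipschitz structure of (Bv1) force $\sup_{\pnorm{h}{}\le M}\sigma_P(\phi_{N,h})\to0$, so these functions shrink in $L_2(P_{\theta_0})$, and $\{\phi_{N,h}:N\in\N,\pnorm{h}{}\le M\}$ sits inside a fixed $P_{\theta_0}$-Donsker class of difference quotients dominated by the envelope $m$. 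The negligibility is then precisely a statement of asymptotic equicontinuity of $\G_N^\pi$ over this Donsker class, i.e. the tightness half of the Donsker Theorem \ref{thm:weak_convergence_HT}, which---as the paper's local results emphasize---is available under (A1) and (A2-CLT) alone, imposing neither boundedness (the Donsker theorem requires none) nor finite-dimensional convergence. It is worth stressing that equicontinuity, and not the weaker local modulus $\omega_\alpha$ of Theorem \ref{thm:local_asymp_moduli}, is what the argument needs: the $\sqrt N$-rescaling in $\phi_{N,h}$ would overwhelm any modulus carrying a polynomial loss of the form $\sigma_P(\cdot)^{-\alpha/2}$, whereas equicontinuity incurs no such loss. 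The one genuinely delicate technical point is therefore to package the $N$- and $h$-indexed rescaled increments inside a single $N$-free Donsker class so that equicontinuity applies verbatim; this is routine once the $L_2$-Lipschitz bound of (Bv1) is in hand.
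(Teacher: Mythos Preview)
Your proposal is correct and follows essentially the same architecture as the paper: verify a weighted LAN expansion and $\sqrt N$-posterior contraction via Theorem~\ref{thm:bayes}, then feed both into the classical Bernstein--von Mises machinery. The only organizational difference is in the LAN remainder step: the paper applies the multiplier inequality of Proposition~\ref{prop:multiplier_ineq} directly in a probability form to $\phi_{N,h}$ (after first invoking Lemma~19.31 of \cite{van2000asymptotic} for $\G_N$), whereas you route through the equicontinuity half of Theorem~\ref{thm:weak_convergence_HT} after embedding the rescaled increments in an $N$-free Donsker class; since that equicontinuity is itself proved via Proposition~\ref{prop:multiplier_ineq}, the two arguments are the same tool in slightly different packaging.
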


Note that in finite-dimensional problems, the efficient score $\tilde{m}$ in Theorem \ref{thm:semipara} can usually be taken as $\dot{\ell}_{\theta_0}$, then under the regularity conditions as in Theorem \ref{thm:semipara}, we have the usual interpretation of the Bernstein-von Mises theorem in our context of weighted likelihood estimation: the sequence of posterior distributions with weighted likelihood resembles that of progressively sharpened normal distributions centered at the maximum weighted likelihood estimator $\hat{\theta}_N^\pi$:
\begin{align*}
\sup_{B}\bigabs{\Pi_N^\pi \big( \theta \in B\big\lvert D^{(N)}\big)-\mathcal{N}_{\hat{\theta}_N^\pi,N^{-1} I_{\theta_0}^{-1}}(B) } = \mathfrak{o}_{\mathbf{P}}(1).
\end{align*}

\section{Proofs for Section \ref{section:theory}}\label{section:proof_theory}



In this section we present proofs of the main steps for the results in Section \ref{section:theory}. Many intermediate technical results will be deferred to Section \ref{section:remaining_proof}.
\begin{proof}[Proof of Theorem \ref{thm:ULLN_HT}]
Note that
\begin{align}\label{ineq:ULLN_HT_0}
\big(\Prob_N^{\pi}-P\big)(f)&= \frac{1}{N}\sum_{i=1}^N \frac{\xi_i}{\pi_i} \big(f(Y_i)-Pf\big)+Pf\cdot \frac{1}{N}\sum_{i=1}^N \left(\frac{\xi_i}{\pi_i}-1\right),
\end{align}
and we will handle two terms in (\ref{ineq:ULLN_HT_0}) separately. 

We handle the first term in (\ref{ineq:ULLN_HT_0}) by Proposition \ref{prop:multiplier_ineq}:
\begin{align*}
\E \sup_{f \in \mathcal{F}} \biggabs{\frac{1}{N}\sum_{i=1}^N \frac{\xi_i}{\pi_i} \big(f(Y_i)-Pf\big) }\lesssim N^{-1} \E \sup_{f \in \mathcal{F}} \biggabs{\sum_{i=1}^N \big(f(Y_i)-Pf\big)} \to 0,
\end{align*}
where the convergence follows by the fact that $\mathcal{F}$ is $P$-Glivenko-Cantelli, and the fact that the sequence $\{N^{-1}\sup_{f \in \mathcal{F}} \abs{\sum_{i=1}^N \big(f(Y_i)-Pf\big)} \}_{N=1}^\infty $ is a reversed submartingale (with respect to the permutation filtration, cf. Lemma 2.4.5 of \cite{van1996weak}) and hence convergence in probability is equivalent to convergence in expectation. The second term in (\ref{ineq:ULLN_HT_0}) also vanishes as $N \to \infty$ by the assumptions. 
\end{proof}

Let
\begin{align*}
\tilde{\G}_N^\pi(f)\equiv \frac{1}{\sqrt{N}}\sum_{i=1}^N \frac{\xi_i}{\pi_i}\big(f(Y_i)-Pf\big).
\end{align*}
Then
\begin{align}\label{ineq:decomposition_Gtilde}
\G_{N}^\pi (f) = \tilde{\G}_N^\pi(f)+ Pf\cdot \frac{1}{\sqrt{N}}\sum_{i=1}^N \left(\frac{\xi_i}{\pi_i}-1\right).
\end{align}

\begin{proof}[Proof of Theorem \ref{thm:weak_convergence_HT}]
We only need to prove asymptotic equi-continuity. Let $\mathcal{F}_{\delta}\equiv \{f-g: f \in \mathcal{F},g\in \mathcal{F}, \pnorm{f-g}{L_2(P)}\leq \delta\}$. 
We only need to assert asymptotic equi-continuity for the two terms in (\ref{ineq:decomposition_Gtilde}). 

Fix any $\delta_N\to 0$. For the first term in (\ref{ineq:decomposition_Gtilde}), using Proposition \ref{prop:multiplier_ineq} we have
\begin{align}\label{ineq:CLT_HT_4}
&\E \sup_{f \in \mathcal{F}_{\delta_N}} \biggabs{\frac{1}{\sqrt{N}}\sum_{i=1}^N \frac{\xi_i}{\pi_i} \big(f(Y_i)-Pf\big) }\lesssim  N^{-1/2} \E \sup_{f \in \mathcal{F}_{\delta_N}} \biggabs{\sum_{i=1}^N \big(f(Y_i)-Pf\big)}\to 0.\nonumber
\end{align}
Here in the above display we used that $\mathcal{F}$ is $P$-Donsker and Lemma  2.3.11 of \cite{van1996weak} to move from asymptotic equi-continuity in probability to in expectation.

For the second term in (\ref{ineq:decomposition_Gtilde}), we simply use (A2-CLT) and the fact that $\sup_{f \in \mathcal{F}_{\delta_N}}\abs{Pf}\leq \delta_N\to 0$. This completes the proof.
\end{proof}

\begin{proof}[Proof of Proposition \ref{prop:cov_G_pi}]
	We check the covariance structure by means of the Cram\'er-Wold device. For any $\bm{f}=(f_\ell)_{\ell=1}^k \in \mathcal{F}^{\otimes k}$ and $\bm{a}=(a_\ell)_{\ell=1}^k$, let $f\equiv \sum_{\ell=1}^k a_\ell f_\ell= \bm{a}^\top \bm{f}$. Note that
	\begin{align*}
	\G_N^\pi(f)&\equiv \sqrt{N}(\Prob_N^\pi f-\Prob_N f)+\G_N(f)\\
	& = \sqrt{NS_N^2} \cdot \frac{1}{S_N}\bigg(\frac{1}{N}\sum_{i=1}^N \frac{\xi_i}{\pi_i}f(Y_i)-\frac{1}{N}\sum_{i=1}^N f(Y_i)\bigg)+\G_N(f)\\
	&\rightsquigarrow \mathcal{N} \big(0, \bm{a}^\top\big((1+\mu_{\pi 1}) P[\bm{f}\bm{f}^\top]-(1-\mu_{\pi2}) (P\bm{f})(P\bm{f}^\top)\big)\bm{a}\big),
	\end{align*}
	where the convergence in distribution follows by Lemma \ref{lem:sum_cond_CLT} and the fact that
	\begin{align*}
	N S_N^2 \to_{\Prob_{(Y,Z)}} \lim_N \E [NS_N^2]&=\mu_{\pi1} Pf^2 + \mu_{\pi 2} (Pf)^2\\
	& = \bm{a}^\top\big( \mu_{\pi 1} P[\bm{f}\bm{f}^\top] + \mu_{\pi 2}  (P\bm{f})(P\bm{f}^\top) \big)\bm{a}.
	\end{align*}
	Here the convergence in probability in the above display follows from the same arguments as in Lemma B.1 of \cite{boistard2017functional} by calculating the variance of $NS_N^2$ with the help of (F3).
\end{proof}


\begin{proof}[Proof of Theorem \ref{thm:local_asymp_moduli}]
We will use Proposition \ref{prop:deviation_normalized_process} to prove the theorem. Take $q=2$, $r_N= N^{-1/(\alpha+2)}$ and $\delta_N=\mathfrak{o}(1/\log^{1/\alpha} N)$ therein. Since $\mathcal{F}$ satisfies an entropy condition with exponent $\alpha \in (0,2)$, by the local maximal inequalities in Lemma \ref{lem:local_maximal_ineq}, it follows that
\begin{align*}
\beta_{N,2}(r_N,\delta_N)= \max_{1\leq j\leq l} \frac{\E \pnorm{\G_N}{\mathcal{F}(r_N2^{j-1},r_N2^j)} }{\omega_\alpha(r_N2^j)}\lesssim \max_{1\leq j\leq l} \frac{ (r_N 2^j)^{1-\frac{\alpha}{2}} }{\omega_\alpha(r_N2^j)}\leq C_1.
\end{align*}
Choosing $s_j\equiv 3\log N$, we have that
\begin{align*}
\tau_{N,2}(r_N,\delta_N,\bm{s})&\asymp \max_{1\leq j\leq l} \frac{r_N 2^j \sqrt{\log N} + \log N/\sqrt{N}}{r_N^{1-\frac{\alpha}{2}} 2^{j(1-\frac{\alpha}{2})} }\\
&\lesssim \delta_N^{\alpha/2} \sqrt{\log N} +\frac{\log N}{\sqrt{N} r_N^{1-\frac{\alpha}{2}} }=\mathfrak{o}(1).
\end{align*}
Using Proposition \ref{prop:deviation_normalized_process} we see that
\begin{align*}
\sup_{f \in \mathcal{F}: r_N^2<Pf^2\leq \delta_N^2} \frac{\abs{\tilde{\G}_N^\pi(f)}}{\omega_\alpha(\sigma_P f)}=\mathcal{O}_{\mathbf{P}}(1).
\end{align*}
On the other hand, the second term in (\ref{ineq:decomposition_Gtilde}) (divided by $\omega_\alpha(\sigma_P f)$) is $\mathfrak{o}_{\mathbf{P}}(1)$ by the assumption (A2-CLT).
\end{proof}

\begin{proof}[Proof of Theorem \ref{thm:ratio_HT_rate}]
We first prove the first claim. Take $\phi(x)=x$. Note that this time with $r_N\gtrsim N^{-1/(\alpha+2)}$,
\begin{align*}
\beta_{N,q}\lesssim \max_{1\leq j\leq l}\frac{ (r_N q^j)^{1-\frac{\alpha}{2}}}{r_N q^j} \asymp r_N^{-\alpha/2}.
\end{align*}
For $s_j\equiv s+2\log j$, we have
\begin{align*}
\tau_{N,q}&\lesssim \max_{1\leq j\leq l} \left(\sqrt{s+2\log j}+ \frac{s+2\log j}{\sqrt{N}r_N q^j}\right)\\
&\lesssim   \sqrt{s\vee \log \log(1/r_N)}+(s\vee 1)N^{-\frac{\alpha}{2(\alpha+2)}},
\end{align*}
and the probability estimate $
e\cdot \sum_{j=1}^l \exp(-s_j)=e\cdot e^{-s} \sum_{j=1}^l j^{-2}\leq Ce^{-s}$.
This proves that
\beqa\label{ineq:ratio_set_sd_1}
&\Prob\bigg(\sup_{f \in \mathcal{F}: r_N^2\leq \sigma_P^2 f\leq 1} \frac{\abs{\tilde{\G}_N^\pi (f)}}{\sigma_P f}\\
&\qquad\qquad  \gtrsim \left(\beta_{N,q}+\sqrt{s\vee \log \log(1/r_N)}+(s\vee 1)N^{-\frac{\alpha}{2(\alpha+2)}}\right)\bigg)\leq e^{-s}.\nonumber
\eeqa
Take $s=3\log N$ to conclude the first claim by Proposition \ref{prop:deviation_normalized_process} and handle the residue part in (\ref{ineq:decomposition_Gtilde}) by (A2-CLT).

For the second claim, note that the proofs of Proposition \ref{prop:deviation_normalized_process} go through by replacing $\sigma_P f$ with a larger term $\sqrt{Pf}$ (since $Pf^2\leq Pf$). Take $\phi(x)=x^2$. For notational convenience we take $r_N>0$ such that $r_N\cdot N^{1/(\alpha+2)}\to \infty$ from now on. Note that
\begin{align*}
\beta_{N,q}\lesssim \max_{1\leq j\leq l} \frac{(r_N q^j)^{1-\frac{\alpha}{2}}}{r_N^2 q^{2j}}\asymp r_N^{-(1+\frac{\alpha}{2})},
\end{align*}
where in the first inequality we note that we only need an upper bound on $Pf^2$, and hence an upper bound for $Pf$ suffices. For $s_j\equiv s+2\log j$, we have
\begin{align*}
\tau_{N,q}&\lesssim \max_{1\leq j\leq l} \bigg((r_Nq^j)^{-1}\sqrt{s+2\log j}+ \frac{s+2\log j}{\sqrt{N}r_N^2 q^{2j}}\bigg)\\
&\lesssim   \sqrt{r_N^{-2}\big(s\vee \log \log(1/r_N)\big) }+(s\vee 1)(\sqrt{N}r_N^2)^{-1}.
\end{align*}
This proves that with $\bar{\gamma}_N\equiv N^{-1/2} r_N^{-(1+\frac{\alpha}{2})}\to 0$,
\begin{align*}
&\Prob\bigg(\sup_{f \in \mathcal{F}: Pf\geq r_N^2} \frac{\abs{\Prob_N^\pi f-Pf\cdot (N^{-1}\sum_{i=1}^N \xi_i/\pi_i)}}{Pf}\\
&\qquad\qquad \gtrsim \bar{\gamma}_N+  \sqrt{(Nr_N^2)^{-1}\big(s\vee \log \log(1/r_N) \big)}+(s\vee 1)(Nr_N^2)^{-1}\bigg)\\
&\leq C e^{-s}.
\end{align*}
By taking again $s=3 \log N$, it follows that
\begin{align*}
\sup_{f \in \mathcal{F}: Pf\geq r_N^2} \frac{\abs{\Prob_N^\pi f-Pf\cdot (N^{-1}\sum_{i=1}^N \xi_i/\pi_i)}}{Pf} = \mathfrak{o}_{\mathbf{P}}(1).
\end{align*}
The claim now follows by noting that $N^{-1}\sum_{i=1}^N \xi_i/\pi_i\to 1$ in probability by (A2-CLT) (actually here (A2-LLN) suffices).
\end{proof}

\begin{proof}[Proof of Theorem \ref{thm:weighted_CLT}]
	We only need to prove asymptotic equi-continuity of the weighted Horvitz-Thompson empirical process. More specifically, we only need to establish that for any $\epsilon>0$ and any $\delta_N\to 0$,
	\begin{align*}
	\lim_{N \to \infty}\Prob\bigg(\sup_{f \in \mathcal{F}: r_N<\sigma_P f\leq \delta_N} \frac{\G_N^\pi (f)}{\phi(\sigma_P f)}>\epsilon\bigg)=0.
	\end{align*}
	By similar calculations as in Theorem \ref{thm:local_asymp_moduli}, this time setting $s_j = 3\log \log (1/(r_N2^j))$, we have for $N$ large enough (and hence $\delta_N>0$ small enough), 
	\begin{align*}
	\beta_{N,2}&\lesssim \max_{1\leq j\leq l} \frac{ (r_N 2^j)^{1-\frac{\alpha}{2}} }{\phi(r_N2^j)}\leq \epsilon/2,\\
	\tau_{N,2}&\asymp \max_{1\leq j\leq l} \frac{r_N 2^j \sqrt{\log \log (1/(r_N2^j))} + \log \log (1/(r_N2^j))/\sqrt{N}}{\phi(r_N 2^j)}\\
	& \leq \epsilon\cdot \max_{1\leq j\leq l} \frac{r_N 2^j \sqrt{\log \log (1/(r_N2^j))} + \log \log (1/(r_N2^j))/\sqrt{N}}{(r_N 2^j)^{1-\frac{\alpha}{2}} \sqrt{\log \log (1/(r_N2^j))}}\\
	&\lesssim \epsilon \bigg( \delta_N^{\alpha/2}+\frac{\sqrt{\log \log N}}{\sqrt{N} r_N^{1-\frac{\alpha}{2}}}\bigg)= \mathfrak{o}(\epsilon).
	\end{align*}
	The probability estimate is
	\begin{align*}
	e\cdot \sum_{j=1}^l \exp(-s_j)&=e\cdot \sum_{j=1}^l \frac{1}{\log^3(1/(r_N2^j))}\\
	&\lesssim \sum_{j=1}^l \frac{1}{\log^3(1/(r_N2^j))} \big(\log(1/(r_N2^j))-\log(1/(r_N2^{j+1}))\big)\\
	&\leq \int_{\log(1/\delta_N)}^{\log(1/r_N)} x^{-3}\ \d{x}\to 0,
	\end{align*}
	as $N\to \infty$. Now apply Proposition \ref{prop:deviation_normalized_process} we see that
	\begin{align*}
	\lim_{N \to \infty}\Prob\bigg(\sup_{f \in \mathcal{F}: r_N<\sigma_P f\leq \delta_N} \frac{\tilde{\G}_N^\pi (f)}{\phi(\sigma_P f)}>\epsilon\bigg)=0.
	\end{align*}
	The residue part in (\ref{ineq:decomposition_Gtilde}) can be handled using (A2-CLT) as $\delta_N \to 0$. 
\end{proof}

\begin{proof}[Proof of Theorem \ref{thm:calibrate_HT}]
	For (1), note that
	\begin{align}\label{ineq:decom_calibrate_1}
	\abs{(\Prob_N^{\pi,c} -P)(f)}&\leq \abs{(\Prob_N^{\pi} -P)(f)}+\biggabs{\frac{1}{N}\sum_{i=1}^N \frac{\xi_i}{\pi_i}f(Y_i)\cdot \big(G(Z_i^\top \hat{\alpha}_N)-1\big)}\\
	&\lesssim \abs{(\Prob_N^{\pi} -P)(f)}+ \Prob_N \abs{f}\cdot \sup_{z \in \mathcal{Z}}\abs{G(z^\top \hat{\alpha}_n)-1}=\mathfrak{o}_{\mathbf{P}}(1)\nonumber
	\end{align}
	where the convergence follows from Theorem \ref{thm:ULLN_HT} and the assumptions on $G$.
	
	For (2), similar to (1), we have
	\begin{align}\label{ineq:decom_calibrate_2}
	\sup_{f \in \mathcal{F}_{\delta_N}}\abs{\G_N^{\pi,c}(f) }
	&\lesssim \sup_{f \in \mathcal{F}_{\delta_N}}\abs{\G_N^{\pi}(f) }+ \sup_{f \in \mathcal{F}_{\delta_N}} \sqrt{\Prob_N f^2}\cdot \sqrt{N}\sup_{z \in \mathcal{Z}}\abs{G(z^\top \hat{\alpha}_n)-1}.
	\end{align}
	Note that $\mathcal{F}$ is $P$-Donsker implies that $\mathcal{F}$ is $P$-Glivenko-Cantelli. Now using the characterization for Glivenko-Cantelli classes (cf. Theorem 3.7.14 (a) and (c) in \cite{gine2015mathematical}), we can conclude that $\mathcal{F}^2$ is also $P$-Glivenko-Cantelli (since we assumed that $PF^2<\infty$). This implies that $\sup_{f \in \mathcal{F}_{\delta_N}} \sqrt{\Prob_N f^2}=\mathfrak{o}_{\mathbf{P}}(1)$. Take limit as $N\to \infty$ in the above display, we see that $\sup_{f \in \mathcal{F}_{\delta_N}}\abs{\G_N^{\pi,c}(f) }= \mathfrak{o}_{\mathbf{P}}(1)$ by the assumption on $\hat{\alpha}_N$. This proves the asymptotic equi-continuity for $\G_N^{\pi,c}$. 
	
	To prove (3), we only need to use the decompositions (\ref{ineq:decom_calibrate_1})-(\ref{ineq:decom_calibrate_2}) and the corresponding theorems. 
\end{proof}

\begin{proof}[Proof of Proposition \ref{prop:cov_structure_cal}]
	We first prove (\ref{ineq:cov_structure_cal_0}). Let $\psi_{\alpha}\equiv \frac{\xi}{\pi} G(Z^\top \alpha) Z-Z$, then $\Prob_N \psi_{\hat{\alpha}_N} = 0$. Hence in the notation of Theorem \ref{thm:Z_est} (with the usual empirical process), we have $\Psi_N(\alpha)= \Prob_N \frac{\xi}{\pi} G(Z^\top \alpha) Z-\Prob_N Z$, and $\Psi(\alpha) = P[(G(Z^\top \alpha)-1)Z]$. The Fr\'echet derivative of $\Psi$ at $\alpha=0$ is given by
	\begin{align*}
	\dot{\Psi}(0)=\frac{\d{}}{\d{\alpha}}P(G(Z^\top \alpha)-1)Z\bigg\lvert_{\alpha=0}=G'(0) \cdot P (ZZ^\top).
	\end{align*}
	Since $\{Z^\top \alpha: \alpha \in \mathcal{A}_c\}$ is $P$-Glivenko-Cantelli, by Theorem 3 of \cite{van2000preservation}, $\{G(Z^\top \alpha): \alpha \in \mathcal{A}_c\}$ is $P$-Glivenko-Cantelli. Since $Z$ is bounded, it is easy to see by Proposition 2 of \cite{van2000preservation} (which is due to \cite{gine1984some}) that $\{G(Z^\top \alpha)Z: \alpha \in \mathcal{A}_c\}$ is also $P$-Glivenko-Cantelli. Hence
	\begin{align*}
	\abs{\Psi(\hat{\alpha}_N)-\Psi(0)}&= \abs{\Psi(\hat{\alpha}_N)-\Psi_N(\hat{\alpha}_N)}\\
	&\leq \sup_{\alpha \in \mathcal{A}_c } \abs{(\Prob_N^\pi-P)(G(Z^\top \alpha) Z)}+\abs{(\Prob_N-P)Z}=\mathfrak{o}_{\mathbf{P}}(1).
	\end{align*}
	This means that $\hat{\alpha}_N=\mathfrak{o}_{\mathbf{P}}(1)$. Furthermore, for some $\tilde{\alpha}$ such that $\pnorm{\tilde{\alpha}}{}\leq \pnorm{\hat{\alpha}_N}{}$,
	\begin{align*}
	\pnorm{\G_N(\psi_{\hat{\alpha}_N}-\psi_0)}{}& = \biggpnorm{\G_N\bigg(\frac{\xi}{\pi}\big(G(Z^\top \hat{\alpha}_N)-1\big) Z\bigg)}{}\\
	&=  \pnorm{(\Prob_N^\pi -P) ZZ^\top G'(Z^\top \tilde{\alpha}) \cdot \sqrt{N}\hat{\alpha}_N}{}.
	\end{align*}
	Since the class $\{ZZ^\top G'(Z^\top \alpha): \pnorm{\alpha}{}\leq \delta\}$ is $P$-Glivenko-Cantelli for small enough $\delta>0$ by similar arguments as above, consistency of $\hat{\alpha}_N$ and Theorem \ref{thm:ULLN_HT} yield that 
	\begin{align*}
	\pnorm{\G_N(\psi_{\hat{\alpha}_N}-\psi_0)}{}= \mathfrak{o}_{\mathbf{P}}(\sqrt{N}\pnorm{\hat{\alpha}_N}{}).
	\end{align*}
	Now it follows from Theorem \ref{thm:Z_est} that
	\begin{align*}
	\sqrt{N}\hat{\alpha}_N=-(G'(0))^{-1} (P(ZZ^\top))^{-1} \G_N \psi_0+\mathfrak{o}_{\mathbf{P}}(1),
	\end{align*}
	and the claim (\ref{ineq:cov_structure_cal_0}) follows by noting that $\G_N\psi_0 = (\G_N^\pi-\G_N)Z$. Now we have
	\begin{align*}
	\G_N^{\pi,c}(f)& = \frac{1}{\sqrt{N}}\sum_{i=1}^N \frac{\xi_i}{\pi_i} \big(G(Z_i^\top \hat{\alpha}_N)-1\big)f(Y_i) + \G_N^\pi (f)\\
	& = \frac{1}{N}\sum_{i=1}^N \frac{\xi_i}{\pi_i}f(Y_i) G'(Z_i^\top \tilde{\alpha}) Z_i^\top \big(\sqrt{N}\hat{\alpha}_N\big) +\G_N^\pi (f)\\
	& = G'(0)\cdot\frac{1}{N}\sum_{i=1}^N \frac{\xi_i}{\pi_i}f(Y_i)  Z_i^\top \big(\sqrt{N}\hat{\alpha}_N\big) +\G_N^\pi (f)+\mathfrak{o}_{\mathbf{P}}(1)\\
	& = G'(0) P(f(Y)Z^\top)\big(\sqrt{N}\hat{\alpha}_N\big) +\G_N^\pi (f)+\mathfrak{o}_{\mathbf{P}}(1)\\
	& = -P(f(Y)Z^\top) (P(ZZ^\top))^{-1} (\G_N^\pi-\G_N)Z+\G_N^\pi(f)+\mathfrak{o}_{\mathbf{P}}(1)\\
	&= -(\G_N^\pi-\G_N)\tilde{Z}+(\G_N^\pi-\G_N)(f)+ \G_N(f)+\mathfrak{o}_{\mathbf{P}}(1)\\
	& = (\G_N^\pi-\G_N)(g_f) +\G_N(f)+\mathfrak{o}_{\mathbf{P}}(1)
	\end{align*}
	where $\tilde{Z}\equiv P(f(Y)Z^\top) (P(ZZ^\top))^{-1} Z \in \R$ and $g(Y,Z)\equiv g_f(Y,Z)\equiv f-\tilde{Z}=f(Y)-P(f(Y)Z^\top) (P(ZZ^\top))^{-1} Z$ are bounded random variables by the assumptions. From here we use the same strategy as in the proof of Proposition \ref{prop:cov_G_pi}: for any $\bm{g}=(g_\ell)_{\ell=1}^k $ and $\bm{a}=(a_\ell)_{\ell=1}^k$, let $g\equiv \sum_{\ell=1}^k a_\ell g_\ell= \bm{a}^\top \bm{g}$. Then
	\begin{align*}
	\G_N^{\pi,c}(f)& = \sqrt{N{S}_N^2}\cdot \frac{1}{{S}_N}\bigg(\frac{1}{N}\sum_{i=1}^N\frac{\xi_i}{\pi_i}g(Y_i,Z_i)-\frac{1}{N}\sum_{i=1}^N g(Y_i,Z_i)\bigg)+\G_N(f)+\mathfrak{o}_{\mathbf{P}}(1)
	\end{align*}
	where $
	{S}_N^2=\frac{1}{N^2}\sum_{1\leq i,j\leq N}\frac{\pi_{ij}-\pi_i\pi_j}{\pi_i\pi_j}g(Y_i,Z_i)g(Y_j,Z_j)$ satisfies
	\begin{align*}
	N{S}_N^2&\to_{\Prob_{(Y,Z)}}  \bm{a}^\top \big(\mu_{\pi1}P[\bm{g}\bm{g}^\top]+\mu_{\pi2}(P\bm{g})(P\bm{g}^\top\big)\big)\bm{a}.
	\end{align*}
	On the other hand, the asymptotic variance of $\G_N(f)$ is given by
	\begin{align*}
	\bm{a}^\top\big(P[\bm{f}\bm{f}^\top]- (P\bm{f})(P\bm{f}^\top)\big)\bm{a}.
	\end{align*}
	The claim of the proposition now follows from Lemma \ref{lem:sum_cond_CLT}.
\end{proof}

\begin{proof}[Proof of Corollary \ref{cor:P_N_center_Donsker}]
	Asymptotic equicontinuity follows from the decomposition $\sqrt{n}(\Prob_N^\pi - \Prob_N) = \sqrt{n}(\Prob_N^\pi - P)-\sqrt{n}(\Prob_N-P)$. The covariance structure can be checked similarly to the proof of Proposition \ref{prop:cov_G_pi} so we omit the details.
\end{proof}

\begin{proof}[Proof of Corollary \ref{cor:Hajek_Donsker}]
	Note that
	\begin{align*}
	\sqrt{n}\big(\Prob_N^{\pi, H}-\Prob_N\big) = \sqrt{n/N} \big(\mathbb{Y}_N + \big(N/\hat{N}-1\big) \tilde{\G}_N^\pi \big),
	\end{align*}
	where
	\begin{align*}
	\mathbb{Y}_N(f)&\equiv \frac{1}{\sqrt{N}}\sum_{i=1}^{N}\bigg(\frac{\xi_i}{\pi_i}-1\bigg) \big(f(Y_i)-Pf\big),\\
	\tilde{\G}_N^\pi(f)& \equiv \frac{1}{\sqrt{N}}\sum_{i=1}^N \frac{\xi_i}{\pi_i}\big(f(Y_i)-Pf\big) = \G_N^\pi f + Pf\cdot \frac{1}{\sqrt{N}} \sum_{i=1}^N \bigg(1-\frac{\xi_i}{\pi_i}\bigg).
	\end{align*}
	Since $N/\hat{N}-1 = \mathfrak{o}_{\mathbf{P}}(1)$ by (A1), and $\sup_{f \in \mathcal{F}}\abs{\tilde{\G}_N^\pi(f)} = \mathcal{O}_{\mathbf{P}}(1)$, it follows that the limit behavior of $\sqrt{n}\big(\Prob_N^{\pi, H}-\Prob_N\big) $ is determined by $\mathbb{Y}_N$. The covariance structure can be verified along the lines of the proof of Proposition \ref{prop:cov_G_pi} (and is actually easier) so we omit the details.
\end{proof}

\begin{lemma}\label{lem:transfer_probability}
	$\Delta_N = \mathfrak{o}_{\mathbf{P}}(1)$ if and only if $\Delta_N\equiv \mathfrak{o}_{\Prob_d}(1)$ in $\Prob_{(Y,Z)}$-probability.
\end{lemma}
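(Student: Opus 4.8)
The plan is to reduce the claimed equivalence to the elementary fact that, for a sequence of $[0,1]$-valued random variables, convergence to $0$ in probability is equivalent to convergence of the expectations to $0$. The bridge between the two notions of smallness is the disintegration (\ref{def:prob_meas}), which expresses $\Prob$ as the mixture of the design kernel $\Prob_d(\cdot,\omega)$ over the super-population law $\Prob_{(Y,Z)}$.

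Concretely, I would fix $\epsilon>0$ and introduce the $\sigma(Y,Z)$-measurable random variable
\[
h_N(\omega)\equiv \Prob_{d|(Y,Z)}\big(\abs{\Delta_N}>\epsilon\big),
\]
which takes values in $[0,1]$. Applying (\ref{def:prob_meas}) to the section of the event $\{\abs{\Delta_N}>\epsilon\}$ over $\omega$ (equivalently, a Fubini-type argument using the finiteness of $\mathcal{S}_N$) yields the identity
\[
\Prob\big(\abs{\Delta_N}>\epsilon\big)=\int_{\mathcal{X}} h_N(\omega)\ \d{\Prob_{(Y,Z)}(\omega)}.
\]
With this identity in hand, the assertion $\Delta_N=\mathfrak{o}_{\mathbf{P}}(1)$ is precisely that $\int h_N\ \d{\Prob_{(Y,Z)}}\to 0$ for every $\epsilon>0$, while $\Delta_N=\mathfrak{o}_{\Prob_d}(1)$ in $\Prob_{(Y,Z)}$-probability is precisely that $h_N\to_{\Prob_{(Y,Z)}} 0$ for every $\epsilon>0$.

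It then remains to prove, for each fixed $\epsilon$, the equivalence of $\int h_N\ \d{\Prob_{(Y,Z)}}\to 0$ and $h_N\to_{\Prob_{(Y,Z)}}0$. For the direction from expectation to probability I would invoke Markov's inequality, $\Prob_{(Y,Z)}(h_N>\delta)\leq \delta^{-1}\int h_N\ \d{\Prob_{(Y,Z)}}$, whose right-hand side tends to $0$ for every $\delta>0$. For the reverse direction I would exploit the bound $0\leq h_N\leq 1$ through the splitting $\int h_N\ \d{\Prob_{(Y,Z)}}\leq \delta+\Prob_{(Y,Z)}(h_N>\delta)$; letting $N\to\infty$ and then $\delta\to 0$ gives $\int h_N\ \d{\Prob_{(Y,Z)}}\to 0$. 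Since both notions are quantified over all $\epsilon>0$, the equivalence follows. I do not expect a genuine obstacle here: the only points meriting care are the measurability of $h_N$ and the validity of the integral identity, both of which are furnished by the product-space construction underlying (\ref{def:prob_meas}) together with the standing assumption that $\mathcal{S}_N$ is finite.
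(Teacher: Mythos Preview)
Your proposal is correct and follows essentially the same route as the paper: both define $h_N=\Prob_{d|(Y,Z)}(\abs{\Delta_N}>\epsilon)$, use the tower identity $\Prob(\abs{\Delta_N}>\epsilon)=\E_{(Y,Z)}h_N$, invoke Markov's inequality for one direction, and the splitting $\E_{(Y,Z)}h_N\leq \delta+\Prob_{(Y,Z)}(h_N>\delta)$ for the other.
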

\begin{proof}
	Suppose $\Delta_N = \mathfrak{o}_{\mathbf{P}}(1)$. Then for any $\epsilon,\delta>0$,
	\begin{align*}
	\Prob_{(Y,Z)} \big(\Prob_{d|(Y,Z)}(\abs{\Delta_N}>\epsilon)>\delta\big)&\leq \delta^{-1} \E_{(Y,Z)} \Prob_{d|(Y,Z)}(\abs{\Delta_N}>\epsilon) \\
	&= \delta^{-1} \Prob\big(\abs{\Delta_N}>\epsilon\big)\to 0.
	\end{align*}
	Conversely, suppose $\Delta_N\equiv \mathfrak{o}_{\Prob_d}(1)$ in $\Prob_{(Y,Z)}$-probability. For any $\epsilon,\delta>0$,
	\begin{align*}
	\Prob\big(\abs{\Delta_N}>\epsilon\big)&=\E_{(Y,Z)} \Prob_{d|(Y,Z)}(\abs{\Delta_N}>\epsilon)\big(\bm{1}_{\Prob_{d|(Y,Z)}(\abs{\Delta_N}>\epsilon)>\delta }+ \bm{1}_{\Prob_{d|(Y,Z)}(\abs{\Delta_N}>\epsilon)\leq \delta }\big)\\
	&\leq \E_{(Y,Z)} \bm{1}_{\Prob_{d|(Y,Z)}(\abs{\Delta_N}>\epsilon)>\delta } + \delta\\
	& = \Prob_{(Y,Z)} \big(\Prob_{d|(Y,Z)}(\abs{\Delta_N}>\epsilon)>\delta\big) + \delta \to 0
	\end{align*}
	as $N \to \infty$ followed by $\delta \to 0$.
\end{proof}

\begin{proof}[Proof of Corollaries \ref{cor:conditional_ULLN} and \ref{cor:conditional_UCLT}]
	The claim of Corollary \ref{cor:conditional_ULLN} follows from Lemma \ref{lem:transfer_probability}. For Corollary \ref{cor:conditional_UCLT}, similar to the proof of Theorem 2.2 of \cite{praestgaard1993exchangeably}, we only need to verify that with $\bar{\G}_N^\pi = \sqrt{n}(\Prob_N^\pi - \Prob_N)$,
	\begin{enumerate}
		\item $(\bar{\G}_N^\pi(f_1),\ldots, \bar{\G}_N^\pi(f_\ell))\rightsquigarrow (\bar{\G}^\pi(f_1),\ldots,\bar{\G}^\pi(f_\ell))$ for any $f_1,\ldots,f_\ell \in \mathcal{F}$ $\Prob_{(Y,Z)}$-a.s., and
		\item for every $\epsilon>0$ and $\delta_N \to 0$, it holds that 
		\begin{align*}
		\lim_{N \to \infty}\Prob_{(Y,Z)} \bigg(\E_{d|(Y,Z)} \sup_{f \in \mathcal{F}_{\delta_N}} \abs{\bar{\G}_N^\pi(f)}>\epsilon\bigg)=0.
		\end{align*}
	\end{enumerate}
	(1) can be checked using Cram\'er-Wold device as in the proof of Proposition \ref{prop:cov_G_pi} along with the countability of $\mathcal{F}$. For (2), it suffices to check $\E\sup_{f \in \mathcal{F}_{\delta_N}} \abs{\bar{\G}_N^\pi(f)}\to 0$. This is a direct consequence of the proof of Theorem \ref{thm:weak_convergence_HT}.
\end{proof}

\section{Proofs for Section \ref{section:applications}}\label{section:proof_app}

In this section we collect proofs for the results in Section \ref{section:applications}.

\begin{proof}[Proof of Theorem \ref{thm:M_estimation}]
	It suffices to prove
	\begin{align}\label{ineq:M_est_0}
	&\Prob\bigg(\sup_{f \in \mathcal{F}: \mathcal{E}_P(f)\geq r_N^2} \biggabs{\frac{\mathcal{E}_{\Prob_N^\pi}(f)}{\mathcal{E}_P(f)}-1}\geq 3/4\bigg)\\
	&\qquad\leq \frac{C_3}{s}e^{-s/C_3}+\Prob\bigg(\biggabs{\frac{1}{\sqrt{N}}\sum_{i=1}^N\bigg(\frac{\xi_i}{\pi_i}-1\bigg) }>t\bigg).\nonumber
	\end{align}
	We remind the readers that the constants $C_i$'s below may not agree with that in the statement of the theorem. Let $\mathcal{F}_j\equiv \{f-g: f,g \in \mathcal{F}_{\mathcal{E}}(r_N2^{j})\}$ for $1\leq j\leq l$ where $l$ is the smallest integer such that $r_N^2 2^{2l}\geq \sup_{f \in \mathcal{F}} Pf - \inf_{f \in \mathcal{F}}Pf$. By Proposition \ref{prop:Talagrand_HT}, there exists some $C_1>1$ only depending on $\pi_0$ such that for any $s_j\equiv s 2^{2j}$ with $s\geq 0$,
	\begin{align*}
	\Prob\bigg[\pnorm{\tilde{\G}_N^\pi}{\mathcal{F}_j}\geq C_1\bigg(\E \pnorm{\G_N}{\mathcal{F}_j} + \sqrt{\sigma^2_j s_j}+\frac{s_j}{\sqrt{N}}\bigg)\bigg]\leq e\cdot \exp\big(-s_j\big)
	\end{align*}
	where $\sigma_j^2 = \sup_{f \in \mathcal{F}_j} \sigma_P^2 f\lesssim (r_N 2^j)^{2/\kappa}$. Hence by a union bound (and boosting $C_1$ if necessary),
	\begin{align}\label{ineq:M_est_1}
	&\Prob\bigg[\max_{1\leq j\leq l} \frac{ \sup_{f \in \mathcal{F}_j}\abs{N^{-1/2}\tilde{\G}_N^\pi(f)} }{r_N^2 2^{2j}}\geq \frac{1}{16}+C_1 \bigg(\sqrt{\frac{s}{N r_N^{4-\frac{2}{\kappa}}}}+\frac{s}{Nr_N^2}\bigg) \bigg]\leq \frac{C_1}{s}e^{-s/C_1},
	\end{align}
	where in the above inequality we used Lemma \ref{lem:local_maximal_ineq} to deduce that
	\begin{align*}
	C_1\max_{1\leq j\leq l} \frac{\E \pnorm{\G_N}{\mathcal{F}_j}}{\sqrt{N}r_N^2 2^{2j}}\leq C_2 \cdot \frac{(r_N 2^j)^{\frac{1}{\kappa}(1-\frac{\alpha}{2})}}{\sqrt{N}(r_N 2^j)^2}\bigg(1+\frac{(r_N 2^j)^{\frac{1}{\kappa}(1-\frac{\alpha}{2})}}{\sqrt{N}(r_N 2^j)^{2/\kappa}}\bigg)\leq 1/16,
	\end{align*}
	as long as $r_N$ is chosen so that 
	\begin{align*}
	r_N\geq (32C_2)^{\frac{\kappa}{2\kappa-1+\alpha/2}} N^{-\frac{\kappa}{4\kappa-2+\alpha}}.
	\end{align*}
    Let the event in (\ref{ineq:M_est_1}) denote $E_s$, and let $F_t\equiv \{\bigabs{\frac{1}{\sqrt{N}} \sum_{i=1}^N\big(\frac{\xi_i}{\pi_i}-1\big)}\leq t \}$. Write $f_0\equiv \arg \min_{f \in \mathcal{F}} Pf$. On the event $E_s\cap F_t$, we have for any $f \in \mathcal{F}_{\mathcal{E}}(r_N2^j)\setminus \mathcal{F}_{\mathcal{E}}(r_N2^{j-1})$ and $f' \in \mathcal{F}_{\mathcal{E}}(\sigma)$ for some $0<\sigma<r_N 2^j$,
	\begin{align*}
	\mathcal{E}_P(f) &= P (f-f')+\big[Pf'-Pf_{0}\big]\leq P (f-f')+\sigma\\
	&\leq \Prob_N^\pi (f-f')+\sigma+ \pnorm{\Prob_N^\pi-P}{\mathcal{F}_j}\\
	&\leq \Prob_N^\pi (f-f')+\sigma+ \pnorm{N^{-1/2}\tilde{\G}_N^\pi}{\mathcal{F}_j}+N^{-1/2}\pnorm{Pf}{\mathcal{F}_j} \cdot t\\
	&\leq \mathcal{E}_{\Prob_N^\pi}(f)+\sigma+ \bigg[\frac{1}{16}+C_1 \bigg(\sqrt{\frac{s}{N r_N^{4-\frac{2}{\kappa}}}}+\frac{s}{Nr_N^2}\bigg)\bigg] r_N^2 2^{2j}+ N^{-1/2} L (r_N 2^j)^{1/\kappa} t \\
	& \leq \mathcal{E}_{\Prob_N^\pi}(f)+\sigma+ \bigg[\frac{1}{8}+C_1 \bigg(\sqrt{\frac{s}{N r_N^{4-\frac{2}{\kappa}}}}+\frac{s}{Nr_N^2}\bigg)\bigg] 4\mathcal{E}_P(f),
	\end{align*}
	provided 
	\begin{align*}
	r_N\geq \bigg(\frac{256 L^2 t^2}{N}\bigg)^{\frac{\kappa}{4\kappa-2}}.
	\end{align*}
	Since $\sigma>0$ is taken arbitrarily, we see that on the event $E_s\cap F_t$, it holds that
	\begin{align*}
	\frac{\mathcal{E}_{\Prob_N^\pi}(f)}{\mathcal{E}_P(f)}\geq 1-\bigg(\frac{1}{2}+4C_1\sqrt{\frac{s}{Nr_N^{4-\frac{2}{\kappa}}} } +4C_1\frac{s}{Nr_N^2}\bigg)
	\end{align*}
	for all $f \in \mathcal{F}$ such that $\mathcal{E}_P(f)\geq r_N^2$. Further choosing 
	\begin{align*}
	r_N\geq  (32C_1)^{\frac{2\kappa}{4\kappa-2}} \bigg(\frac{s}{N}\bigg)^{\frac{\kappa}{4\kappa-2} }\vee (32 C_1)^{1/2}\bigg(\frac{s}{N}\bigg)^{1/2}\equiv C_3 \bigg(\frac{s}{N}\bigg)^{\frac{\kappa}{4\kappa-2} },
	\end{align*}
	we have that $\mathcal{E}_{P} (\hat{f}_N^\pi)<r_N^2$. Hence for any $f \in \mathcal{F}_{\mathcal{E}}(r_N 2^j)\setminus \mathcal{F}_{\mathcal{E}}(r_N 2^{j-1})$,  
	\begin{align*}
	\mathcal{E}_{\Prob_N^\pi}(f)& = \Prob_N^\pi f-\Prob_N^\pi \hat{f}_N^\pi \leq Pf - P\hat{f}_N^\pi+\pnorm{\Prob_N^\pi-P}{\mathcal{F}_j}\\
	& \leq \mathcal{E}_{P}(f)+ \bigg[\frac{1}{8}+C_1 \bigg(\sqrt{\frac{s}{N r_N^{4-\frac{2}{\kappa}}}}+\frac{s}{Nr_N^2}\bigg)\bigg] 4\mathcal{E}_P(f).
	\end{align*}
	This entails
	\begin{align*}
	\frac{\mathcal{E}_{\Prob_N^\pi}(f)}{\mathcal{E}_P(f)}\leq 1+\bigg(\frac{1}{2}+4C_1\sqrt{\frac{s}{Nr_N^{4-\frac{2}{\kappa}}} } +4C_1\frac{s}{Nr_N^2}\bigg)
	\end{align*}
	for all $f \in \mathcal{F}$ such that $\mathcal{E}_P(f)\geq r_N^2$. The claim (\ref{ineq:M_est_0}) follows by noting that the choice of $r_N$ entails that the term in the parenthesis above is no larger than $3/4$.
\end{proof}

\begin{proof}[Proof of Theorem \ref{thm:Z_est}]
	The proof adapts that of Theorem 3.3.1 of \cite{van1996weak}. By definition of $\hat{\theta}_N^\pi$, we have
	\begin{align}\label{ineq:Z_est_1}
	\sqrt{N} \big(\Psi(\hat{\theta}_N^\pi)-\Psi(\theta_0)\big)\equiv \sqrt{N} \big(\Psi(\hat{\theta}_N^\pi)-\Psi_N(\hat{\theta}_N^\pi)\big)= - \sqrt{N}(\Psi_N-\Psi)(\theta_0) +R_N,
	\end{align}
	where
	\begin{align}\label{ineq:Z_est_2}
	\pnorm{R_N}{\mathcal{H}}&= \pnorm{\G_N^\pi(\psi_{\hat{\theta}_N^\pi,h}-\psi_{\theta_0,h})}{\mathcal{H}}\\
	&\lesssim \pnorm{\G_N(\psi_{\hat{\theta}_N^\pi,h}-\psi_{\theta_0,h})}{\mathcal{H}}+\pnorm{P(\psi_{\hat{\theta}_N^\pi,h}-\psi_{\theta_0,h})}{\mathcal{H}}\biggabs{\frac{1}{\sqrt{N}}\sum_{i=1}^N \bigg(\frac{\xi_i}{\pi_i}-1\bigg) } \nonumber\\
	&\leq (1+\mathcal{O}_{\mathbf{P}}(N^{-1/2})) \pnorm{\G_N(\psi_{\hat{\theta}_N^\pi,h}-\psi_{\theta_0,h})}{\mathcal{H}} + \pnorm{\Prob_N^\pi \psi_{\theta_0,h}}{\mathcal{H}}\cdot \mathcal{O}_{\mathbf{P}}(1)  \nonumber\\
	&\leq (1+\mathcal{O}_{\mathbf{P}}(N^{-1/2})) \pnorm{\G_N(\psi_{\hat{\theta}_N^\pi,h}-\psi_{\theta_0,h})}{\mathcal{H}} + \pnorm{(\Prob_N-P) \psi_{\theta_0,h}}{\mathcal{H}}\cdot \mathcal{O}_{\mathbf{P}}(1)  \nonumber\\
	&=\mathfrak{o}_{\mathbf{P}}\big(1+\sqrt{N}\pnorm{\hat{\theta}_N^\pi-\theta_0}{}\big).\nonumber
	\end{align}
	By Fr\'echet differentiability of $\Psi$ at $\theta_0$ and continuous invertibility of $\dot{\Psi}_{\theta_0}$, we have for all $\theta$ close enough to $\theta_0$,
	\begin{align}\label{ineq:Z_est_3}
	\pnorm{\Psi({\theta})-\Psi(\theta_0)}{\mathcal{H}}\gtrsim \pnorm{\theta-\theta_0}{}+\mathfrak{o}\big(\pnorm{\theta-\theta_0}{}\big).
	\end{align}
	Combining (\ref{ineq:Z_est_1})-(\ref{ineq:Z_est_3}) we obtain
	\begin{align*}
	\sqrt{N} \pnorm{\hat{\theta}_N^\pi-\theta_0}{}(1+\mathfrak{o}_{\mathbf{P}}(1))\lesssim \mathcal{O}_{\mathbf{P}}(1)+ \mathfrak{o}_{\mathbf{P}}\big(1+\sqrt{N}\pnorm{\hat{\theta}_N^\pi-\theta_0}{}\big),
	\end{align*}
	from which we conclude that $\sqrt{N}\pnorm{\hat{\theta}_N^\pi-\theta_0}{} = \mathcal{O}_{\mathbf{P}}(1)$, and hence $\pnorm{R_N}{\mathcal{H}}=\mathfrak{o}_{\mathbf{P}}(1)$. The claim now follows by the continuous invertibility of $\dot{\Psi}_{\theta_0}$.
	%
\end{proof}

\begin{proof}[Proof of Theorem \ref{thm:semipara}]
	First note that
	\begin{align*}
	&\sqrt{N} P_{\theta_0,\eta_0} \big(\tilde{m}(\hat{\theta}_N^\pi,\hat{\eta}_N^\pi)-\tilde{m}(\theta_0,\eta_0)\big)\\
	&=-\sqrt{N} (\Prob_N^\pi- P_{\theta_0,\eta_0})\big(\tilde{m}(\hat{\theta}_N^\pi,\hat{\eta}_N^\pi)-\tilde{m}(\theta_0,\eta_0)\big)+\sqrt{N} \Prob_N^\pi \big(\tilde{m}(\hat{\theta}_N^\pi,\hat{\eta}_N^\pi)-\tilde{m}(\theta_0,\eta_0)\big)\\
	&= -\G_N^\pi \big(\tilde{m}(\hat{\theta}_N^\pi,\hat{\eta}_N^\pi)-\tilde{m}(\theta_0,\eta_0)\big) -\sqrt{N}\Prob_N^\pi \tilde{m}(\theta_0,\eta_0)+\mathfrak{o}_{\mathbf{P}}(1)\\
	& = -\G_N^\pi \tilde{m}(\theta_0,\eta_0)+ \mathfrak{o}_{\mathbf{P}}(1).
	\end{align*}
	where in the second equality we used (\ref{def:par_semi}), and in the third equality we used (\ref{cond:no_bias}) and (S2), (S4). Now by (S3), the left hand side of the above display equals $-I_{\theta_0,\eta_0} \big(\sqrt{N}(\hat{\theta}_N^\pi-\theta_0)\big)+\mathfrak{o}_{\mathbf{P}}(1)$, and hence (S1) yields that
	\begin{align*}
	\sqrt{N}(\hat{\theta}_N^\pi-\theta_0) = I_{\theta_0,\eta_0}^{-1} \G_N^\pi \tilde{m}(\theta_0,\eta_0)+\mathfrak{o}_{\mathbf{P}}(1),
	\end{align*}
	as desired.
\end{proof}

\begin{proof}[Proof of Theorem \ref{thm:bayes}]
	We first verify the local Gaussianity condition of \cite{han2017bayes}. To this end, write $p_f^{(N)}\equiv \prod_{i=1}^N p_f(Y_i)^{\xi_i/\pi_i}$. Then for $\lambda\in \R$, by Proposition \ref{prop:multiplier_ineq},
	\begin{align*}
	&P_{f_0}^{(N)} \exp\bigg[\lambda \bigg(\log \frac{p_{f_0}^{(N)}}{p_{f_1}^{(N)}}-P_{f_0}^{(N)}\log \frac{p_{f_0}^{(N)}}{p_{f_1}^{(N)}}\bigg)\bigg]\\
	&\leq  P_{f_0}^{(N)}\exp\left[\biggabs{\lambda \sum_{i=1}^N \frac{\xi_i}{\pi_i}\left(\log \frac{p_{f_0}}{p_{f_1}}(Y_i)-P_{f_0}\log \frac{p_{f_0}}{p_{f_1}}\right) }\right]\\
	&= \sum_{p=1}^\infty (p!)^{-1} \abs{\lambda}^p \cdot P_{f_0}^{(N)}  \biggabs{ \sum_{i=1}^N \frac{\xi_i}{\pi_i}\left(\log \frac{p_{f_0}}{p_{f_1}}(Y_i)-P_{f_0}\log \frac{p_{f_0}}{p_{f_1}}\right) }^p\\
	&\leq  \sum_{p=1}^\infty (p!)^{-1} \abs{C_0\lambda}^p \cdot  P_{f_0}^{(N)}  \biggabs{ \sum_{i=1}^N \left(\log \frac{p_{f_0}}{p_{f_1}}(Y_i)-P_{f_0}\log \frac{p_{f_0}}{p_{f_1}}\right) }^p\\
	& = P_{f_0}^{(N)}\exp\left[\biggabs{C_0 \lambda \sum_{i=1}^N \left(\log \frac{p_{f_0}}{p_{f_1}}(Y_i)-P_{f_0}\log \frac{p_{f_0}}{p_{f_1}}\right) }\right]\\
	&\leq c_1' \exp\left[\psi_{\kappa_g' nd^2(f_0,f_1),\kappa_\Gamma' }(\lambda)\right],
	\end{align*}
	where in the last inequality we may adjust constants to handle the absolute value (cf. Theorem 2.3 of \cite{boucheron2013concentration}). 
	
	Now by (essentially) Lemma 1 of \cite{han2017bayes}, there exists a test $\phi_N\equiv \phi_N(D^{(N)})$ such that for any $j \in \N$,
	\begin{align*}
	P_{f_0}\phi_N \leq c_0e^{-N\delta_N^2/c_0},\quad \sup_{f\in \mathcal{F}: d(f,f_0)\geq j\delta_N}  P_{f}(1-\phi_N)\leq c_0 e^{-j^2N\delta_N^2/c_0}.
	\end{align*}
	By Lemma 12 of \cite{han2017bayes}, there exist constants $c_1,c_2>0$ such that $P_{f_0}(A_N^c)\leq c_2e^{-N\delta_N^2/c_2}$, where
	\begin{align*}
	A_N\equiv \bigg\{\int \frac{p_f^{(N)}}{p_{f_0}^{(N)}}\ \d{\Pi_N(f)}\geq \Pi_N\big(d(f,f_0)\leq \delta_N\big)e^{-N\delta_N^2/c_1}\bigg\}.
	\end{align*}
	Now for notational convenience, let $\hat{\Pi}_N^\pi\equiv \Pi_N^\pi(f\in \mathcal{F}: d(f,f_0)>\delta_N|D^{(N)})$, we have
	\begin{align*}
	P_{f_0} \hat{\Pi}_N^\pi & = P_{f_0} \hat{\Pi}_N^\pi \phi_N + P_{f_0} \hat{\Pi}_N^\pi (1-\phi_N)\bm{1}_{A_N^c}+P_{f_0} \hat{\Pi}_N^\pi (1-\phi_N)\bm{1}_{A_N}\\
	&\leq c_3 e^{-N\delta_N^2/c_3}+ P_{f_0} \hat{\Pi}_N^\pi (1-\phi_N)\bm{1}_{A_N}.
	\end{align*}
	On the other hand, let $\mathcal{F}_j\equiv \{f\in \mathcal{F}: j\delta_N<d(f,f_0)\leq (j+1) \delta_N\}$, we have by assumption
	\begin{align*}
	P_{f_0} \hat{\Pi}_N^\pi (1-\phi_N)\bm{1}_{A_N}&\lesssim \sum_{j=1}^\infty \frac{e^{-j^2 N\delta_N^2/c_0}\Pi_N(\mathcal{F}_j)}{\Pi_N\big(d(f,f_0)\leq \delta_N\big)e^{-N\delta_N^2/c_1}}\\
	&\lesssim \sum_{j=1}^\infty e^{-j^2 N\delta^2_N/c_4} \lesssim e^{- N\delta^2_N/c_5},
	\end{align*}
	as desired.
\end{proof}

Finally we prove Theorem \ref{thm:bvm}. First we need the following general result due to \cite{kleijn2012bernstein}.

\begin{proposition}\label{prop:bvm_generic}
	Suppose the following conditions hold:
	\begin{enumerate}
		\item (LAN condition) There exist random vectors $\Delta_{N,\theta_0}=\mathcal{O}_{\mathbf{P}}(1)$ and a non-singular matrix $I_{\theta_0}$ such that for every compact $K \subset \R^d$,
		\begin{align*}
		\sup_{h \in K} \biggabs{ N\Prob_N^\pi \log \frac{p_{\theta_0+h/\sqrt{N}} }{p_{\theta_0}}-h^\top I_{\theta_0} \Delta_{N,\theta_0}-\frac{1}{2}h^\top I_{\theta_0}h}=\mathfrak{o}_{\mathbf{P}}(1).
		\end{align*}
		\item (Sufficient mass condition) The prior $\Pi$ on $\Theta$ has a Lebesgue density being continuous and positive on a neighborhood of $\theta_0$.
		\item (Posterior contraction at $\sqrt{N}$-rate) For every $L_N\to \infty$,
		\begin{align*}
		P_{\theta_0} \Pi_N^\pi\big(\theta \in \Theta: \pnorm{\theta-\theta_0}{}>L_N/\sqrt{N}\big\lvert D^{(N)}\big)\to 0.
		\end{align*}
	\end{enumerate}
	Then the posterior distribution with weighted likelihood $\Pi_N^\pi$ converges to a sequence of normal distributions in the total variational distance:
	\begin{align*}
	\sup_{B}\bigabs{\Pi_N^\pi \big( \sqrt{N}(\theta-\theta_0) \in B\big\lvert D^{(N)}\big)-\mathcal{N}_{\Delta_{N,\theta_0}, I_{\theta_0}^{-1}}(B) } = \mathfrak{o}_{\mathbf{P}}(1).
	\end{align*}
\end{proposition}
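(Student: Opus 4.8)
The plan is to prove this abstract Bernstein--von Mises statement by the now-standard Le Cam localization argument, exactly as in \cite{kleijn2012bernstein}; the point is that conditions (1)--(3) are phrased purely in terms of the posterior $\Pi_N^\pi$ and the weighted log-likelihood ratio, so the weighting plays no further role and the proof is a deterministic analytic reduction taking the three hypotheses as black boxes. First I would pass to the local parameter $h=\sqrt N(\theta-\theta_0)$ and write the local posterior in unnormalized form $s_N(h)\equiv \exp(L_N(h))\,\pi_N(h)$, where $L_N(h)\equiv N\Prob_N^\pi \log(p_{\theta_0+h/\sqrt N}/p_{\theta_0})$ and $\pi_N(h)\equiv \pi(\theta_0+h/\sqrt N)$ is the rescaled prior density; the target measure $\mathcal{N}_{\Delta_{N,\theta_0}, I_{\theta_0}^{-1}}$ has unnormalized density $\phi_N(h)\equiv \exp(h^\top I_{\theta_0}\Delta_{N,\theta_0}-\tfrac12 h^\top I_{\theta_0}h)$, and the asserted total-variation bound is exactly $\pnorm{s_N/\int s_N-\phi_N/\int\phi_N}{L_1}=\mathfrak{o}_{\mathbf{P}}(1)$.

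The key is an abstract Scheff\'e-type lemma: for nonnegative integrable (random) functions, $L_1$-convergence of the \emph{normalized} densities follows once one has (a) $\int_K \abs{s_N-\phi_N}\to 0$ in $P_{\theta_0}$-probability on every fixed compact $K$, together with (b) uniform tightness-in-probability of both normalized families and a lower bound on the mass $\int s_N$ bounded away from $0$. I would verify (a) by combining the three hypotheses on a fixed compact: the LAN condition (1) forces $L_N(h)-(h^\top I_{\theta_0}\Delta_{N,\theta_0}-\tfrac12 h^\top I_{\theta_0}h)\to 0$ uniformly on $K$, the continuity and positivity of the prior density (2) give $\pi_N(h)\to\pi(\theta_0)$ uniformly on $K$, and since $\Delta_{N,\theta_0}=\mathcal{O}_{\mathbf{P}}(1)$ and $I_{\theta_0}$ is nonsingular the integrand $\phi_N$ is dominated on $K$; a dominated-convergence argument then yields $\int_K\abs{s_N-\phi_N}=\mathfrak{o}_{\mathbf{P}}(1)$. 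Throughout I would pass to subsequences to upgrade the various $\mathfrak{o}_{\mathbf{P}}$ statements to almost-sure convergence, which makes the pointwise bookkeeping routine.

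For (b), the tightness of the normal family $\phi_N/\int\phi_N=\mathcal{N}_{\Delta_{N,\theta_0}, I_{\theta_0}^{-1}}$ is immediate from $\Delta_{N,\theta_0}=\mathcal{O}_{\mathbf{P}}(1)$ and nonsingular $I_{\theta_0}$ (Gaussian tails), which also gives $\int_K\phi_N$ bounded away from $0$ on any fixed ball $K$, hence $\int s_N\ge \int_K s_N\approx\int_K\phi_N$ is bounded below by (a). The nontrivial input is the tightness of the posterior: I would show that the contraction hypothesis (3), stated only for growing radii $L_N\to\infty$, in fact forces the \emph{uniform} tightness-in-probability of the local posteriors, i.e.\ for every $\epsilon,\delta>0$ there is a fixed $M$ with $\limsup_N P_{\theta_0}\big(\Pi_N^\pi(\abs{\sqrt N(\theta-\theta_0)}>M\mid D^{(N)})>\epsilon\big)<\delta$. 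This is a short contradiction argument: if uniform tightness failed, one could splice together a diverging radius sequence along which the tail mass stays bounded below with positive probability, contradicting (3).

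The main obstacle I anticipate is precisely this bridge between ``contraction on growing balls'' (condition (3)) and ``tightness at a fixed radius,'' since the LAN expansion (1) is only available on fixed compacts while the posterior and its Gaussian approximation both live on all of $\R^d$; the Scheff\'e-plus-tightness lemma is exactly the device that reconciles these, and getting the order of quantifiers right (fixed $K$ for LAN, growing $M$ for tails, all under ``in $P_{\theta_0}$-probability'') is where the care is needed. Once the compact reduction and the tail control are in hand, combining them---restrict to $\{\abs{h}\le M\}$, apply (a) there, and bound the two tails by (b)---closes the estimate and yields $\sup_B\abs{\Pi_N^\pi(\sqrt N(\theta-\theta_0)\in B\mid D^{(N)})-\mathcal{N}_{\Delta_{N,\theta_0}, I_{\theta_0}^{-1}}(B)}=\mathfrak{o}_{\mathbf{P}}(1)$.
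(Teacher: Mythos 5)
Your argument is correct, but there is nothing in the paper to compare it against line by line: the paper does not prove Proposition \ref{prop:bvm_generic} at all, importing it as a known result due to \cite{kleijn2012bernstein}; the paper's own work goes into verifying its hypotheses (Lemmas \ref{lem:LAN} and \ref{lem:post_contraction}). What you have written is essentially a reconstruction of the proof in that reference: localization $h=\sqrt N(\theta-\theta_0)$, comparison of the unnormalized local posterior $e^{L_N(h)}\pi_N(h)$ with the unnormalized Gaussian density on fixed compacts (LAN, prior continuity, and $\Delta_{N,\theta_0}=\mathcal{O}_{\mathbf{P}}(1)$), and a total-variation decomposition into a compact part plus two tails. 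Your two key technical points are exactly the ones that make the quoted theorem work: the monotonicity in $M$ of the tail mass plus a splicing/contradiction argument upgrades condition (3), stated only for growing radii $L_N\to\infty$, to tightness at a fixed radius in $P_{\theta_0}$-probability; and the Scheff\'e-type comparison only ever needs the LAN expansion on a fixed compact. Two points of bookkeeping: (i) on a compact $K$ what your hypotheses actually yield is $\int_K\abs{s_N-\pi(\theta_0)\phi_N}=\mathfrak{o}_{\mathbf{P}}(1)$ rather than $\int_K\abs{s_N-\phi_N}=\mathfrak{o}_{\mathbf{P}}(1)$; the constant $\pi(\theta_0)>0$ is harmless after normalization but should be carried; (ii) you have silently corrected a sign typo in the statement: as printed, the LAN display asserts $L_N(h)-h^\top I_{\theta_0}\Delta_{N,\theta_0}-\frac12 h^\top I_{\theta_0}h=\mathfrak{o}_{\mathbf{P}}(1)$, i.e.\ a positive-definite quadratic in the expansion of $L_N$, which is incompatible with the Gaussian limit $\mathcal{N}_{\Delta_{N,\theta_0}, I_{\theta_0}^{-1}}$; your reading $L_N(h)=h^\top I_{\theta_0}\Delta_{N,\theta_0}-\frac12 h^\top I_{\theta_0}h+\mathfrak{o}_{\mathbf{P}}(1)$ is the intended one (the same sign slip appears in the displayed conclusion of Lemma \ref{lem:LAN}). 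As for what each route buys: the paper's citation keeps the section short, while your argument makes the result self-contained and makes explicit that the sampling weights $\xi_i/\pi_i$ never enter the proof of the proposition itself---they are entirely absorbed into hypotheses (1) and (3)---which is precisely why the i.i.d.-likelihood result of \cite{kleijn2012bernstein} can be quoted verbatim in this weighted setting.
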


\begin{lemma}\label{lem:LAN}
	Suppose that (A1) and (A2-CLT) holds, and that:
	\begin{enumerate}
		\item the map $\theta \mapsto \log p_{\theta}(x)=\ell_\theta(x)$ is differentiable at $\theta_0$ for all $x$ with derivative $\dot{\ell}_{\theta_0}(x)$, and for $\theta_1,\theta_2$ close enough to $\theta$,
		\begin{align*}
		\abs{\ell_{\theta_1}(x)-\ell_{\theta_2}(x)}\leq m(x)\pnorm{\theta_1-\theta_2}{}
		\end{align*}
		holds for some $P_{\theta_0}$-square integrable function $m$.
		\item The Kullback-Leibler divergence of $P_{\theta_0}$ is twice differentiable with a non-singular Hessian $I_{\theta_0}$: for $\theta$ close enough to $\theta_0$,
		\begin{align*}
		P_{\theta_0}\log \frac{p_{\theta_0}}{p_\theta} = \frac{1}{2} (\theta-\theta_0)I_{\theta_0} (\theta-\theta_0)+\mathfrak{o}\big(\pnorm{\theta-\theta_0}{}^2\big).
		\end{align*}
	\end{enumerate}
	Then the LAN condition in Proposition \ref{prop:bvm_generic} holds with $\Delta_{N,\theta_0}= I_{\theta_0}^{-1} \G_N^\pi \dot{\ell}_{\theta_0}$.
\end{lemma}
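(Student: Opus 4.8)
The plan is to carry out a weighted analogue of the classical local asymptotic normality (LAN) expansion, and to use the multiplier inequality (Proposition \ref{prop:multiplier_ineq}) to transfer the only delicate estimate—the empirical-process remainder—from the Horvitz--Thompson process back to the ordinary empirical process, where the classical argument applies. Writing $\theta_h\equiv \theta_0+h/\sqrt N$ and using the pointwise differentiability in condition (1), I would first decompose
\begin{align*}
\ell_{\theta_h}(x)-\ell_{\theta_0}(x)=\frac{1}{\sqrt N}h^\top\dot{\ell}_{\theta_0}(x)+\frac{1}{\sqrt N}g_{N,h}(x),\quad g_{N,h}(x)\equiv \sqrt N\big(\ell_{\theta_h}(x)-\ell_{\theta_0}(x)\big)-h^\top\dot{\ell}_{\theta_0}(x),
\end{align*}
and summing against the weights yields, with $\tilde{\G}_N^\pi$ as in (\ref{ineq:decomposition_Gtilde}),
\begin{align*}
N\Prob_N^\pi\log\frac{p_{\theta_h}}{p_{\theta_0}}=\frac{1}{\sqrt N}\sum_{i=1}^N\frac{\xi_i}{\pi_i}h^\top\dot{\ell}_{\theta_0}(Y_i)+\tilde{\G}_N^\pi(g_{N,h})+Pg_{N,h}\cdot\frac{1}{\sqrt N}\sum_{i=1}^N\frac{\xi_i}{\pi_i}.
\end{align*}
I would then analyze the three terms separately.

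The first term is routine: since $P\dot{\ell}_{\theta_0}=0$, it equals $h^\top\G_N^\pi\dot{\ell}_{\theta_0}=h^\top I_{\theta_0}\Delta_{N,\theta_0}$ with $\Delta_{N,\theta_0}=I_{\theta_0}^{-1}\G_N^\pi\dot{\ell}_{\theta_0}$, which is the stated centering; its $\mathcal{O}_{\mathbf{P}}(1)$ order follows from (A1) and Proposition \ref{prop:multiplier_ineq}, which dominate it by $\E\abs{\G_N\dot{\ell}_{\theta_0}}=\mathcal{O}(1)$ (note $\pnorm{\dot{\ell}_{\theta_0}}{}\leq m\in L_2(P_{\theta_0})$, obtained by letting the increments shrink in the Lipschitz bound of condition (1)). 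For the third term, the twice-differentiability of the Kullback--Leibler divergence in condition (2) gives $Pg_{N,h}=\sqrt N\,P_{\theta_0}\log(p_{\theta_h}/p_{\theta_0})=-\tfrac{1}{2\sqrt N}h^\top I_{\theta_0}h+\mathfrak{o}(N^{-1/2})$ uniformly over the compact $K$, while $N^{-1}\sum_i\xi_i/\pi_i\to 1$ in probability by (A2-CLT) (in fact (A2-LLN) suffices); multiplying out yields $-\tfrac12 h^\top I_{\theta_0}h+\mathfrak{o}_{\mathbf{P}}(1)$ uniformly in $h\in K$, the local quadratic term of the LAN expansion.

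The crux—and the only genuinely nontrivial step—is to show $\sup_{h\in K}\abs{\tilde{\G}_N^\pi(g_{N,h})}=\mathfrak{o}_{\mathbf{P}}(1)$. Here I would invoke Proposition \ref{prop:multiplier_ineq} to bound $\E\sup_{h\in K}\abs{\tilde{\G}_N^\pi(g_{N,h})}$ by a constant multiple of $\E\sup_{h\in K}\abs{\G_N(g_{N,h})}$, thereby reducing the question to the ordinary empirical process indexed by the shrinking, finite-dimensionally parametrized class $\mathcal{G}_N\equiv\{g_{N,h}:h\in K\}$. Two facts make this class asymptotically equicontinuous at vanishing scale: (i) by condition (1), $g_{N,h}\to 0$ pointwise and $\abs{g_{N,h}}\leq 2(\sup_{h\in K}\pnorm{h}{})\,m$, so dominated convergence gives $\pnorm{g_{N,h}}{L_2(P)}\to 0$ for each fixed $h$; and (ii) the Lipschitz bound gives $\pnorm{g_{N,h}-g_{N,h'}}{L_2(P)}\leq 2\pnorm{m}{L_2(P)}\pnorm{h-h'}{}$, so $h\mapsto\G_N(g_{N,h})$ is stochastically Lipschitz and its oscillation over the compact $K$ is uniformly negligible. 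Combining the pointwise convergence on a finite net of $K$ with this equicontinuity gives $\sup_{h\in K}\abs{\G_N(g_{N,h})}=\mathfrak{o}_{\mathbf{P}}(1)$, exactly the remainder estimate underlying the classical LAN theorem. The main obstacle is thus concentrated entirely in this empirical-process remainder; once Proposition \ref{prop:multiplier_ineq} strips away the Horvitz--Thompson weights the argument is standard, and the three pieces assemble into the desired uniform expansion with $\Delta_{N,\theta_0}=I_{\theta_0}^{-1}\G_N^\pi\dot{\ell}_{\theta_0}$.
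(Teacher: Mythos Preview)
Your proposal is correct and follows essentially the same route as the paper: both reduce the uniform LAN remainder for the Horvitz--Thompson process to the ordinary empirical process via the multiplier inequality (Proposition~\ref{prop:multiplier_ineq}), and then invoke the classical argument (the paper cites Lemma~19.31 of \cite{van2000asymptotic} directly, while you sketch its proof through pointwise convergence plus Lipschitz equicontinuity over the compact $K$). The only cosmetic differences are that the paper organizes the decomposition around $\G_N^\pi(f_h)$ and uses a probability-form of the multiplier inequality, whereas you split off the linear term first and work with the expectation form; note that your final step concludes $\sup_{h\in K}\lvert\G_N(g_{N,h})\rvert=\mathfrak{o}_{\mathbf{P}}(1)$ but you need convergence of its expectation to feed back through Proposition~\ref{prop:multiplier_ineq}, which follows by the same uniform-integrability lift (Lemma~2.3.11 of \cite{van1996weak}) the paper uses elsewhere.
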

\begin{proof}
	Using Lemma 19.31 of \cite{van2000asymptotic}, we may conclude that the empirical process $
	\big\{\G_N\big(\sqrt{N}(\ell_{\theta_0+h/\sqrt{N}}-\ell_{\theta_0} )-h^\top \dot{\ell}_{\theta_0}\big)\equiv \G_N(f_h): \pnorm{h}{}\leq 1 \big\}$ converges weakly to $0$ in $\ell^\infty(h:\pnorm{h}{}\leq 1)$. Using the same arguments as in Proposition \ref{prop:multiplier_ineq} (but now in the probability form), it follows that for any $\epsilon>0$,
	\begin{align*}
	&\Prob\bigg(\sup_{\pnorm{h}{}\leq 1} \biggabs{\frac{1}{\sqrt{N}}\sum_{i=1}^N \frac{\xi_i}{\pi_i} \big(f_h(Y_i)-P_{\theta_0} f_h\big)  }>\epsilon \bigg)\lesssim \Prob\bigg(\sup_{\pnorm{h}{}\leq 1} \bigabs{ \G_N\big(f_h\big)}>\epsilon/C \bigg)\to 0.
	\end{align*}
	Hence by (A2-CLT), and the fact that $\sup_{\pnorm{h}{}\leq 1}\abs{P_{\theta_0} f_h} \leq \sup_{\pnorm{h}{}\leq 1} \sqrt{P_{\theta_0} f_h^2} \to 0$, we have
	\begin{align*}
	&\sup_{\pnorm{h}{}\leq 1} \bigabs{ \G_N^\pi\big(f_h\big)}\\
	&\leq \sup_{\pnorm{h}{}\leq 1} \biggabs{\frac{1}{\sqrt{N}}\sum_{i=1}^N \frac{\xi_i}{\pi_i} \big(f_h(Y_i)-P_{\theta_0} f_h\big)  }+ \sup_{\pnorm{h}{}\leq 1}\abs{P_{\theta_0} f_h}  \biggabs{\frac{1}{\sqrt{N}}\sum_{i=1}^N \bigg(\frac{\xi_i}{\pi_i}-1\bigg)}\\
	&= \mathfrak{o}_{\mathbf{P}}(1).
	\end{align*}
	This means that
	\begin{align*}
	\sup_{ \pnorm{h}{}\leq 1} \biggabs{N\Prob_N^\pi \log \frac{p_{\theta_0+h/\sqrt{N}}}{p_{\theta_0}}-\G_N^\pi h^\top \dot{\ell}_{\theta_0}-N\cdot P_{\theta_0}  \log \frac{p_{\theta_0+h/\sqrt{N}}}{p_{\theta_0}} } = \mathfrak{o}_{\mathbf{P}}(1).
	\end{align*}
	Using condition (2), we have
	\begin{align*}
	\sup_{\pnorm{h}{}\leq 1} \biggabs{ N\Prob_N^\pi \log \frac{p_{\theta_0+h/\sqrt{N}} }{p_{\theta_0}}-\G_N^\pi h^\top \dot{\ell}_{\theta_0}-\frac{1}{2}h^\top I_{\theta_0}h}=\mathfrak{o}_{\mathbf{P}}(1),
	\end{align*}
	proving the claim of the lemma.
\end{proof}

\begin{lemma}\label{lem:post_contraction}
	Suppose (A1) holds. Further assume that the local Gaussianity condition and the prior mass condition (2) in Theorem \ref{thm:bvm} hold. Then the posterior distribution with weighted likelihood $\Pi_N^\pi$ contracts at an $\sqrt{N}$-rate.
\end{lemma}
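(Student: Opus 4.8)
Lemma \ref{lem:post_contraction} asks to show that under (A1), the local Gaussianity condition (B1), and the prior mass condition (Bv2), the weighted-likelihood posterior contracts at rate $1/\sqrt{N}$. This is the "posterior contraction at $\sqrt{N}$-rate" hypothesis needed to invoke Proposition \ref{prop:bvm_generic} (Kleijn-van der Vaart) in the proof of the BvM Theorem \ref{thm:bvm}.

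**What's available.**

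The key engine is Theorem \ref{thm:bayes}, the general contraction rate theorem for the weighted posterior, which gives exponential control:
$$P_{f_0}\Pi_N^\pi(d^2(f,f_0) > C_1\delta_N^2 | D^{(N)}) \le C_2\exp(-N\delta_N^2/C_2),$$
provided (B1), (B2) local entropy, and (B3) prior mass hold. Here the model is finite-dimensional (a compact $\Theta \subset \R^d$), $d$ is essentially the Euclidean metric (via the non-singular Hessian $I_{\theta_0}$), and the prior has a bounded, bounded-away-from-zero Lebesgue density.

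**My reasoning about the approach.**

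The natural plan: I want to take $\delta_N \asymp 1/\sqrt{N}$ (the parametric rate), verify (B1)-(B3) of Theorem \ref{thm:bayes} for this $\delta_N$, and then the exponential bound in Theorem \ref{thm:bayes} gives contraction at the $\sqrt{N}$ rate. But wait — the definition of "contracts at $\sqrt{N}$-rate" in Proposition \ref{prop:bvm_generic} is: for every $L_N \to \infty$, $P_{\theta_0}\Pi_N^\pi(\|\theta-\theta_0\| > L_N/\sqrt{N} | D^{(N)}) \to 0$. So I need the radius to shrink like $L_N/\sqrt{N}$, not just $C/\sqrt{N}$.

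Let me think about this more carefully. Theorem \ref{thm:bayes} with $\delta_N = M_N/\sqrt{N}$ for a slowly growing $M_N \to \infty$ would give contraction at radius $\asymp M_N/\sqrt{N}$ with the RHS $\to 0$. But I need the sharper statement for arbitrary $L_N \to \infty$. The cleanest route: apply Theorem \ref{thm:bayes} with a fixed small $\delta_N = \epsilon/\sqrt{N}$? No — Theorem \ref{thm:bayes} requires $N\delta_N^2 \to \infty$ for the entropy and prior-mass conditions and for the RHS to decay. With $\delta_N = \epsilon/\sqrt{N}$, $N\delta_N^2 = \epsilon^2$ is constant, which is borderline.

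The right move is to take $\delta_N = 1/\sqrt{N}$ and then use a rescaling/sieve argument at radius $L_N\delta_N$. Actually, re-examining the conditions: in finite dimensions with $d(f,f_0)\asymp \|\theta-\theta_0\|$, the local entropy (B2) is automatic (a $d$-dimensional ball has log-covering number $\lesssim d\log(1/c_5)$, a constant, so $\le c_4 N\delta_N^2$ holds once $N\delta_N^2 \gtrsim 1$), and the prior mass condition (B3) holds because the density is bounded above and below, giving $\Pi_N(\text{annulus } j)/\Pi_N(\text{ball}) \lesssim j^{d-1}\cdot(\text{vol ratio}) \lesssim \exp(c_6 j^2 N\delta_N^2)$ trivially for $\delta_N = 1/\sqrt{N}$ since the LHS grows only polynomially in $j$. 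Condition (B1) is exactly the local Gaussianity assumed in (Bv1).

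So the plan is: set $\delta_N = 1/\sqrt{N}$, verify (B1)-(B3) using finite-dimensionality and (Bv2), obtain from Theorem \ref{thm:bayes} that $P_{\theta_0}\Pi_N^\pi(\|\theta-\theta_0\|^2 > C_1/N | D^{(N)}) \le C_2\exp(-1/C_2)$ — but this is only a constant bound, not $\to 0$. To get the $L_N/\sqrt{N}$ statement I instead run Theorem \ref{thm:bayes} with $\delta_N = L_N^{1/2}/\sqrt{N}$ (slowly growing), so that $N\delta_N^2 = L_N \to \infty$, the RHS $C_2\exp(-L_N/C_2) \to 0$, and the contraction radius is $\sqrt{C_1}\cdot L_N^{1/2}/\sqrt{N}$. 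Since $L_N$ is arbitrary slowly growing and dominates any fixed multiple, this delivers exactly the conclusion of the Proposition's condition (3).

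**The proof plan.**

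First I would observe that since $\Theta$ is a compact subset of $\R^d$ and (by the non-singular Hessian $I_{\theta_0}$ and the two-sided bound $c_2 d^2 \le P_{\theta_0}\log(p_{\theta_0}/p_\theta) \le c_3 d^2$ in (B1)) the semimetric $d$ is equivalent to the Euclidean metric near $\theta_0$, I can work with $\|\theta-\theta_0\|$ throughout. Next I would fix an arbitrary $L_N \to \infty$ (WLOG slowly, and WLOG $N\delta_N^2\to\infty$ with $\delta_N = \sqrt{L_N/N}$) and verify the three hypotheses of Theorem \ref{thm:bayes}. The local Gaussianity (B1) is supplied directly by (Bv1). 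For the local entropy (B2), I note the covering number of a Euclidean ball of radius $2\epsilon$ by balls of radius $c_5\epsilon$ is bounded by $(C/c_5)^d$, a constant free of $\epsilon$, so $1+\sup_{\epsilon>\delta_N}\log\mathcal{N}(\ldots) \lesssim d \le c_4 N\delta_N^2$ for $N$ large. For the prior mass condition (B3), I use that the Lebesgue density is bounded above and below (Bv2): the ratio of prior mass on the annulus $\{j\delta_N < d \le (j+1)\delta_N\}$ to that on the ball $\{d\le\delta_N\}$ is bounded by a constant times the volume ratio $\asymp ((j+1)^d - j^d)/1 \lesssim j^{d-1}$, which is $\le \exp(c_6 j^2 N\delta_N^2)$ for any fixed $c_6 > 0$ once $N\delta_N^2 \ge 1$, since the left side is polynomial in $j$ while the right is exponential in $j^2$.

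Finally I would invoke Theorem \ref{thm:bayes} to conclude
$$P_{\theta_0}\Pi_N^\pi\big(\|\theta-\theta_0\|^2 > C_1\delta_N^2 \,\big|\, D^{(N)}\big) \le C_2\exp(-N\delta_N^2/C_2) = C_2\exp(-L_N/C_2) \to 0.$$
Since $C_1\delta_N^2 = C_1 L_N/N$ and $L_N\to\infty$ is arbitrary, relabeling $L_N' = \sqrt{C_1 L_N}\to\infty$ shows that for every sequence tending to infinity the posterior mass outside the corresponding $\sqrt{N}$-ball vanishes, which is exactly the definition of contraction at $\sqrt{N}$-rate. \emph{The main obstacle} is the bookkeeping in reconciling the two notions of "rate": Theorem \ref{thm:bayes} is stated for a fixed sequence $\delta_N$ with exponential decay, whereas Proposition \ref{prop:bvm_generic} demands the statement uniformly over all $L_N\to\infty$; the resolution is the slowly-growing choice $\delta_N = \sqrt{L_N/N}$, together with checking that the finite-dimensional entropy and the bounded prior density make (B2)-(B3) hold comfortably for this choice. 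A minor technical point to state carefully is the equivalence of $d$ and the Euclidean norm on a neighborhood of $\theta_0$, which is what lets the excess-mass statement in $d$ transfer to the $\|\cdot\|$-ball in the Proposition's hypothesis.
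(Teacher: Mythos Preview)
Your proposal is correct and follows essentially the same route as the paper: apply Theorem \ref{thm:bayes} with $d=\pnorm{\cdot}{}$ and $\delta_N = L_N/\sqrt{N}$ (your $\sqrt{L_N/N}$ is the same up to relabeling), verify (B2) via the constant local entropy of a finite-dimensional Euclidean ball, and verify (B3) using the bounded-above-and-below prior density to get a polynomial ratio $\lesssim j^d \le \exp(c_6 j^2 N\delta_N^2)$. Your write-up is in fact more careful than the paper's about the $L_N$ bookkeeping and the identification of $d$ with the Euclidean norm, but the argument is the same.
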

\begin{proof}
We will apply Theorem \ref{thm:bayes} with $d=\pnorm{\cdot}{}$. By a standard local entropy estimate for the finite-dimensional Euclidean space, we may take $\delta_N\equiv L_N/\sqrt{N}$ for any $L_N \to \infty$. For the prior mass condition, note that $\Pi$ has Lebesgue density bounded away from both $0$ and $\infty$ on $\Theta$, and hence for any $j \in \N$, with $\Theta_j\equiv \{\theta: j\delta_N<\pnorm{\theta-\theta_0}{}\leq (j+1) \delta_N\}$, 
	\begin{align*}
	 \frac{\Pi(\Theta_j)}{\Pi\big(\pnorm{\theta-\theta_0}{}\leq \delta_N\big)}\leq C_1 j^d\leq \exp(C_2j^2 N\delta_N^2)
	\end{align*}
holds with a small enough $C_2>0$ as long as $N$ is large enough. The conditions of Theorem \ref{thm:bayes} are now verified, and hence a $\sqrt{N}$-contraction rate is established.
\end{proof}

\begin{proof}[Proof of Theorem \ref{thm:bvm}]
	The claim follows by using Lemmas \ref{lem:LAN} and \ref{lem:post_contraction} in Proposition \ref{prop:bvm_generic}.
\end{proof}

\section{Ancillary results}\label{section:remaining_proof}

%
%

\begin{proposition}\label{prop:multiplier_ineq}
	Suppose (A1) holds. Then for any countable class $\mathcal{F}$ and $p\geq 1$,
	\begin{align*}
	\E \biggpnorm{ \sum_{i=1}^N \frac{\xi_{i}}{\pi_i} \big(f(Y_i)-Pf\big) }{\mathcal{F}}^p \leq  (C/\pi_0)^p \cdot \E \biggpnorm{ \sum_{i=1}^N \big(f(Y_i)-Pf\big) }{\mathcal{F}}^p.
	\end{align*}
	Here $C>0$ is an absolute constant.
\end{proposition}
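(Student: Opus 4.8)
The plan is to read this as a distribution-free \emph{multiplier inequality} in the spirit of \cite{han2017sharp}: the weights $w_i\equiv \xi_i/\pi_i$ may be arbitrarily dependent across $i$, but they enjoy two structural features, namely the uniform bound $0\le w_i\le 1/\pi_0$ (which is precisely (A1), since $\xi_i\in[0,1]$ and $\pi_i\ge\pi_0$), and the conditional independence $\{\xi_i\}\perp Y^{(N)}\mid Z^{(N)}$. The whole point is to use the latter to \emph{freeze} the weights: conditioning on the design information $\mathcal{W}\equiv \sigma(\xi^{(N)},Z^{(N)})$ turns the $w_i$ into fixed numbers bounded by $1/\pi_0$, while leaving the $Y_i$ conditionally independent of them. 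One can then run a symmetrization argument on the data randomness alone, and at no point does one need the $w_i$ to be mutually independent — this is exactly what makes the inequality hold regardless of the dependence among the multipliers.

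First I would record the boundedness $w_i\in[0,1/\pi_0]$ and, writing $h_i\equiv f(Y_i)-Pf$, reduce the claim to $\E\pnorm{\sum_{i=1}^N w_ih_i}{\mathcal{F}}^p\le (C/\pi_0)^p\,\E\pnorm{\sum_{i=1}^N h_i}{\mathcal{F}}^p$. Working conditionally on $\mathcal{W}$, I would introduce a ghost sample together with i.i.d.\ Rademacher variables $\{\epsilon_i\}$ to symmetrize the data, producing (in conditional expectation) a bound by a multiple of $\E\pnorm{\sum_i w_i\epsilon_i h_i}{\mathcal{F}}^p$; here the symmetrization inequalities are used in their $L_p$ form, which is legitimate because $t\mapsto t^p$ is convex and nondecreasing (cf.\ Lemma 2.3.1 of \cite{van1996weak}). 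Next I would invoke the comparison/contraction principle for Rademacher averages (cf.\ \cite{gine2015mathematical}): since the frozen weights satisfy $|\pi_0 w_i|\le 1$,
\begin{align*}
\E_\epsilon\pnorm{\textstyle\sum_i w_i\epsilon_i h_i}{\mathcal{F}}\le \pi_0^{-1}\,\E_\epsilon\pnorm{\textstyle\sum_i \epsilon_i h_i}{\mathcal{F}},
\end{align*}
and analogously in $L_p$, so that the weights are stripped off at the cost of the advertised factor $1/\pi_0$. A final desymmetrization restores the unweighted sum $\sum_i h_i$, and taking the outer expectation over $\mathcal{W}$ collapses the conditioning and yields the stated inequality, the absolute constant $C$ being the product of the symmetrization and contraction constants.

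The delicate step — the one I expect to be the main obstacle — is the \emph{centering}. Symmetrization of the data requires the ghost sample to reproduce the conditional law of $Y_i$ given $Z^{(N)}$, so that the symmetrized summands are genuinely sign-symmetric, whereas the inequality is phrased with the \emph{marginal} centering $Pf$. These two centerings differ by $\E[f(Y_i)\mid Z_i]-Pf$, and the argument must be arranged so that this discrepancy is reabsorbed into $\sum_i h_i$ rather than surviving as an uncontrolled remainder. The natural device is the conditional Jensen (tower) inequality
\begin{align*}
\pnorm{\textstyle\sum_i\big(\E[f(Y_i)\mid Z_i]-Pf\big)}{\mathcal{F}}=\pnorm{\E\big[\textstyle\sum_i h_i\,\big|\,Z^{(N)}\big]}{\mathcal{F}}\le \E\big[\pnorm{\textstyle\sum_i h_i}{\mathcal{F}}\,\big|\,Z^{(N)}\big],
\end{align*}
which controls the deterministic part of the centering mismatch by the target right-hand side. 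Threading this through the conditioning on $\mathcal{W}$ so that the entire comparison uses only the bound (A1) and the conditional independence — in particular, keeping the weighted conditional-mean contribution $\sum_i w_i(\E[f(Y_i)\mid Z_i]-Pf)$ under control — is the technical heart of the proof, and is where one must be most careful.
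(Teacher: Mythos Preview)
Your instinct to freeze the weights by conditioning on $\mathcal{W}=\sigma(\xi^{(N)},Z^{(N)})$ is right, and you correctly locate the obstacle: after conditioning, the summands $w_i(f(Y_i)-Pf)$ are independent but not mean-zero, with conditional mean $w_i g_i(f)$ where $g_i(f)\equiv \E[f(Y_i)\mid Z_i]-Pf$. The conditional-Jensen move you propose controls only the \emph{unweighted} residual $\sum_i g_i$; it does not control the \emph{weighted} one $\sum_i w_i g_i$, and this is a genuine gap. The $g_i$'s are $Z^{(N)}$-measurable and the $w_i$'s depend on $Z^{(N)}$ in an arbitrary way, so there is no pointwise or contraction comparison between $\|\sum_i w_i g_i\|_{\mathcal{F}}$ and $\|\sum_i g_i\|_{\mathcal{F}}$ (take $w_1=1/\pi_0$, $w_2=\dots=w_N=0$ and $g_1=-g_2$, $g_3=\dots=g_N=0$). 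Trying to bound $\E\|\sum_i w_i g_i\|_{\mathcal{F}}^p$ by $\E\|\sum_i g_i\|_{\mathcal{F}}^p$ is the very multiplier inequality you are proving, but now for multipliers that are \emph{not} conditionally independent of the data $g_i$, so the argument becomes circular.

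The paper avoids the centering issue entirely by replacing symmetrization/contraction with an Abel-summation (summation-by-parts) argument plus a L\'evy-type maximal inequality. Order the weights $\eta_i=\xi_i/\pi_i$ as $\eta_{(1)}\ge\cdots\ge\eta_{(N)}\ge\eta_{(N+1)}=0$ with permutation $\sigma$, so that pointwise
\begin{align*}
\biggpnorm{\sum_{i=1}^N \eta_i h_i}{\mathcal{F}}
=\biggpnorm{\sum_{k=1}^N(\eta_{(k)}-\eta_{(k+1)})\sum_{i\le k} h_{\sigma(i)}}{\mathcal{F}}
\le \eta_{(1)}\,\max_{1\le k\le N}\biggpnorm{\sum_{i\le k} h_{\sigma(i)}}{\mathcal{F}},
\end{align*}
and $\eta_{(1)}\le 1/\pi_0$ by (A1). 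Conditionally on $\mathcal{W}$ the permutation $\sigma$ is fixed and the $h_{\sigma(i)}=f(Y_{\sigma(i)})-Pf$ are independent; the Montgomery-Smith/L\'evy maximal inequality (Theorem~1.1.5 of \cite{de2012decoupling}) then gives $\E[\max_k\|\sum_{i\le k}h_{\sigma(i)}\|_{\mathcal{F}}^p\mid\mathcal{W}]\le C^p\,\E[\|\sum_i h_i\|_{\mathcal{F}}^p\mid Z^{(N)}]$, and taking outer expectation finishes. The key advantage over symmetrization is that this maximal inequality requires only independence of the summands, not that they be centered, so the discrepancy $\E[f(Y_i)\mid Z_i]-Pf$ never has to be isolated or controlled.
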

\begin{proof}[Proof of Proposition \ref{prop:multiplier_ineq}]
	The proof is essentially contained in the proof of Theorem 1 of \cite{han2017sharp}. We only sketch some details. Denote $\eta_i\equiv \xi_i/\pi_i$, and let $\eta_{(1)}\geq \eta_{(2)}\geq \ldots\geq \eta_{(N)}\geq \eta_{(N+1)}=0$ be the reversed order statistics of $\{\eta_i\}_{i=1}^N$. Then, using the same arguments as in \cite{han2017sharp}, we have
	\begin{align*}
	\E \biggpnorm{ \sum_{i=1}^N \eta_{i} (f(Y_i)-Pf) }{\mathcal{F}}^p 
	&\leq \E\bigg[  \biggabs{\sum_{k=1}^N (\eta_{(k)}-\eta_{(k+1)})}^p  \max_{1\leq k\leq N}\biggpnorm{\sum_{i=1}^k(f(Y_{i})-Pf) }{\mathcal{F}}^p\bigg]\\
	&\leq (1/\pi_0)^p \cdot \E\max_{1\leq k\leq N}\biggpnorm{ \sum_{i=1}^k (f(Y_{i})-Pf) }{\mathcal{F}}^p\\
	&\leq  (C/\pi_0)^p \cdot \E\biggpnorm{ \sum_{i=1}^N (f(Y_{i})-Pf) }{\mathcal{F}}^p.
	\end{align*}
	The last line follows from L\'evy-type maximal inequality (cf. Theorem 1.1.5 of \cite{de2012decoupling}):
	\begin{align*}
	&\E\max_{1\leq k\leq N}\biggpnorm{ \sum_{i=1}^k (f(Y_{i})-Pf) }{\mathcal{F}}^p  = \int_0^\infty \Prob\bigg(\max_{1\leq k\leq N}\biggpnorm{ \sum_{i=1}^k (f(Y_{i})-Pf) }{\mathcal{F}}>t\bigg) p t^{p-1}\ \d{t}\\
	&\leq 9 \int_0^\infty \Prob\bigg(\biggpnorm{ \sum_{i=1}^N (f(Y_{i})-Pf) }{\mathcal{F}}>t/30\bigg) p t^{p-1}\ \d{t}\\
	&\leq C^p \cdot \E\biggpnorm{ \sum_{i=1}^N (f(Y_{i})-Pf) }{\mathcal{F}}^p,
	\end{align*}
	as desired.
\end{proof}

The following is an analogue of the one-sided Talagrand's concentration inequality in the context of Horvitz-Thompson empirical processes.

\begin{proposition}\label{prop:Talagrand_HT}
	Suppose (A1) holds. Let $\mathcal{F}$ be a countable class of real-valued measurable functions such that $\sup_{f \in \mathcal{F}} \pnorm{f}{\infty}\leq b$. Then there exists some constant $C=C(\pi_0)>0$ such that for any $x\geq 0$,
	\begin{align*}
	&\Prob\bigg( C^{-1}\sup_{f \in \mathcal{F}}\biggabs{\sum_{i=1}^N \frac{\xi_i}{\pi_i} \big(f(Y_i)-Pf\big) }\\
	&\qquad\qquad \geq \E \sup_{f \in \mathcal{F}}\biggabs{\sum_{i=1}^N \big(f(Y_i)-Pf\big)} +\sqrt{N \sigma^2 x} + b x \bigg)\leq e\cdot e^{-x},
	\end{align*}
	where $\sigma^2\equiv \sup_{f \in \mathcal{F}} \mathrm{Var}_P f$.
\end{proposition}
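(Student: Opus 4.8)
The plan is to condition on the sampling design so as to freeze the weights, reduce the statement to a standard \emph{one-sided} Talagrand (Bousquet) concentration inequality for a weighted empirical process whose multipliers are uniformly bounded, and then to control the resulting conditional expected supremum by the unweighted empirical process via Proposition~\ref{prop:multiplier_ineq}. Write $\eta_i\equiv \xi_i/\pi_i$, so that (A1) gives the deterministic bound $0\le \eta_i\le 1/\pi_0$. Condition on the design information $\mathcal{D}\equiv \sigma(\xi^{(N)},\pi^{(N)},Z^{(N)})$. The point of the conditional-independence structure $\xi^{(N)}\perp Y^{(N)}\mid Z^{(N)}$ — the same structure underlying Proposition~\ref{prop:multiplier_ineq} — is that, conditionally on $\mathcal{D}$, the $\eta_i$ are fixed constants and the summands $\eta_i\big(f(Y_i)-Pf\big)$ are independent across $i$. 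Their conditional boundedness is $\lvert \eta_i(f(Y_i)-Pf)\rvert\le 2b/\pi_0$, and their conditional second moments satisfy $\sum_{i=1}^N \E\big[(\eta_i(f(Y_i)-Pf))^2\mid \mathcal{D}\big]\le \pi_0^{-2}N\sigma^2$ uniformly in $f\in\mathcal{F}$.

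With these two parameters in hand, and since $\mathcal{F}$ is countable, Bousquet's one-sided inequality applies to $Z\equiv \sup_{f\in\mathcal{F}}\big\lvert\sum_{i=1}^N\eta_i(f(Y_i)-Pf)\big\rvert$ (the absolute value is absorbed by taking the supremum over $\mathcal{F}\cup(-\mathcal{F})$, which costs only a union-bound factor). This yields, conditionally on $\mathcal{D}$ and for every $x\ge 0$,
\[
\Prob\Big(Z\ge \E[Z\mid\mathcal{D}]+\sqrt{2vx}+\tfrac{2b}{3\pi_0}x\ \Big|\ \mathcal{D}\Big)\le e^{-x},\qquad v=\tfrac{4b}{\pi_0}\E[Z\mid\mathcal{D}]+\pi_0^{-2}N\sigma^2.
\]
An elementary AM--GM split $\sqrt{2vx}\le \tfrac12\E[Z\mid\mathcal{D}]+C(\pi_0)\big(\sqrt{N\sigma^2 x}+bx\big)$ then absorbs the $\E[Z\mid\mathcal{D}]$-dependent part of $v$ into a term $\tfrac32\E[Z\mid\mathcal{D}]$ plus exactly the advertised fluctuations $\sqrt{N\sigma^2 x}+bx$, up to a constant depending only on $\pi_0$.

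It remains to bound the conditional mean. Running the rearrangement-plus-maximal-inequality argument of Proposition~\ref{prop:multiplier_ineq} conditionally on $\mathcal{D}$ (the weights $\eta_i$ are $\mathcal{D}$-measurable and the $Y_i$ enter as a conditionally independent sequence, so that proof goes through verbatim with $p=1$) gives $\E[Z\mid\mathcal{D}]\le (C/\pi_0)\,\E\big[\pnorm{\sum_{i=1}^N(f(Y_i)-Pf)}{\mathcal{F}}\mid\mathcal{D}\big]$, which is controlled by the unweighted empirical-process expectation $\E\pnorm{\sum_{i=1}^N(f(Y_i)-Pf)}{\mathcal{F}}$ appearing in the statement. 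Substituting this back makes the right-hand side of the conditional tail bound deterministic, so taking expectation over $\mathcal{D}$ by the tower property removes the conditioning without changing the bound; collecting all constants into a single $C=C(\pi_0)$ and letting the factor $e$ absorb the two-sided union bound and the usual slack gives the claim.

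The main obstacle, and the only genuinely nonstandard step, is the reduction in the first paragraph: one must verify that conditioning on $\mathcal{D}$ simultaneously freezes the weights and renders the centered summands a legitimate independent-summand empirical process, so that a concentration inequality for independent variables is applicable \emph{despite} the complicated, design-induced dependence among the raw inclusion indicators $\xi_i$. This is precisely where the assumption $\xi^{(N)}\perp Y^{(N)}\mid Z^{(N)}$ is indispensable, and it is also what aligns the conditional centering with the multiplier inequality. Once this reduction is secured, the remaining ingredients — Bousquet's inequality for independent (not necessarily identically distributed) summands, and the already-established Proposition~\ref{prop:multiplier_ineq} for the expectation — are entirely routine.
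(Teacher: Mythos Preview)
Your conditioning strategy has a genuine gap that is not easily patched. The super-population model takes $(Y_i,Z_i)$ jointly i.i.d., but does \emph{not} assume $Y_i\perp Z_i$; in applications the auxiliary variable is typically correlated with $Y$. Consequently, once you condition on $\mathcal{D}=\sigma(\xi^{(N)},\pi^{(N)},Z^{(N)})$, the conditional law of $Y_i$ is $P_{Y|Z_i}$, not $P$. Three of your assertions then fail simultaneously: (i) the summands $\eta_i(f(Y_i)-Pf)$ are not conditionally centered, since $\E[f(Y_i)\mid\mathcal{D}]=\E[f(Y_i)\mid Z_i]\neq Pf$ in general, so Bousquet's inequality does not apply in the form you invoke; (ii) the conditional second moment $\E[(f(Y_i)-Pf)^2\mid Z_i]$ is a random quantity that need not be bounded by $\sigma^2$, so your variance parameter $\pi_0^{-2}N\sigma^2$ is unjustified; and (iii) the conditional mean $\E\big[\bigpnorm{\sum_i(f(Y_i)-Pf)}{\mathcal{F}}\,\big|\,\mathcal{D}\big]$ is a function of $Z^{(N)}$, not the deterministic quantity $\E\bigpnorm{\sum_i(f(Y_i)-Pf)}{\mathcal{F}}$, so the right-hand side of your conditional tail bound is random and the tower-property step does not ``remove the conditioning without changing the bound.'' Your argument would be correct under the additional hypothesis $Y\perp Z$, but that is not part of the setup.

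The paper's proof sidesteps all of this by never conditioning. It works entirely in moments: Proposition~\ref{prop:multiplier_ineq} bounds the $p$th moment of the Horvitz--Thompson supremum by that of the unweighted empirical process for every $p\ge 1$; Talagrand's inequality for the \emph{ordinary} i.i.d.\ empirical process (Lemma~\ref{lem:talagrand_conc_ineq}) converts the latter into an explicit moment bound of the form $C_0^p\big((\E S_N)^p+p^{p/2}(N\sigma^2)^{p/2}+p^pb^p\big)$; and Lemma~\ref{lem:moment_tail} turns this uniform-in-$p$ moment bound back into the advertised sub-gamma tail. The point is that the unconditional moment comparison in Proposition~\ref{prop:multiplier_ineq} already absorbs the dependence between the weights and the data, so one never needs to produce a conditionally centered, conditionally bounded-variance process.
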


One notable feature of the above Talagrand-type inequality is that we only need to compute the size of the empirical process $\E \sup_{f \in \mathcal{F}}\abs{\sum_{i=1}^N \big(f(Y_i)-Pf\big)}$ instead of the Horvitz-Thompson empirical process.

To prove Proposition \ref{prop:Talagrand_HT}, we need Talagrand's concentration inequality \cite{talagrand1996new} for the usual empirical process. 

\begin{lemma}\label{lem:talagrand_conc_ineq}
	Let $X_1,\ldots,X_N$ be i.i.d. with law $P$ on $(\mathcal{X},\mathcal{A})$. Let $\mathcal{F}$ be a countable class of $P$-centered real-valued measurable functions such that $\sup_{f \in \mathcal{F}} \pnorm{f}{\infty}\leq b$. Let $S_j\equiv \sup_{f \in \mathcal{F}} \abs{\sum_{i=1}^j f(X_i)}$. Then
	\begin{align*}
	\Prob\bigg(\max_{1\leq j\leq N}S_j \geq \E S_N +\sqrt{2\bar{\sigma}^2 x}+b x/3 \bigg)\leq e^{-x},
	\end{align*}
	where $\bar{\sigma}^2\equiv N\sigma^2+2b  \E S_N$ with $\sigma^2\equiv \sup_{f \in \mathcal{F}} \mathrm{Var}_P f$. Consequently, 
	\begin{align*}
	\E S_N^p \leq C_0^p\big((\E S_N)^p + p^{p/2} (N\sigma^2)^{p/2}+p^p b^p \big).
	\end{align*}
\end{lemma}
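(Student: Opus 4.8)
The statement has two parts: the tail bound in the first display, and the moment bound (the ``Consequently'') that I would derive from it. The plan is to treat the tail bound as the Talagrand--Bousquet concentration inequality for empirical processes (cf.\ \cite{talagrand1996new}) and then extract the moment bound by a routine sub-gamma moment argument, the only real bookkeeping being the presence of the self-referential variance proxy $\bar{\sigma}^2 = N\sigma^2 + 2b\,\E S_N$.

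For the tail bound I would first reduce the running maximum to a single empirical-process supremum. Writing $f_{(f,j),i}(x)\equiv f(x)\bm{1}_{i\le j}$, one has $\max_{1\le j\le N} S_j=\sup_{(f,j)}\abs{\sum_{i=1}^N f_{(f,j),i}(X_i)}$, a supremum of sums of independent, coordinate-dependent, $P$-centered summands bounded by $b$, whose weak variance $\sup_{f,j}\sum_{i\le j}\E f(X_i)^2$ is at most $N\sigma^2$. Applying Bousquet's form of Talagrand's inequality to this augmented process gives the first display, with the stated proxy $\bar{\sigma}^2 = N\sigma^2 + 2b\,\E S_N$ and uniform bound $b$; the centering is the mean $\E\max_{1\le j\le N}S_j$ of the augmented supremum, which I will denote $\E S_N$ following the statement (applied directly to $\mathcal{F}$, the same inequality yields the non-maximal bound for $S_N$, which is all that the moment step below actually needs).

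For the moment bound I only use the tail inequality for $S_N$ itself. Setting $W\equiv(S_N-\E S_N)_+$, the bound $\Prob\big(W\ge \sqrt{2\bar{\sigma}^2 x}+bx/3\big)\le e^{-x}$ is sub-gamma, so by the standard moment characterization (e.g.\ Theorem~2.3 of \cite{boucheron2013concentration}) there is an absolute constant with $\E W^p \le C^p\big(p^{p/2}\bar{\sigma}^p + p^p b^p\big)$. Since $S_N\le \E S_N + W$, this yields $\E S_N^p \le 2^{p-1}\big((\E S_N)^p + \E W^p\big)$, and it then remains to expand $\bar{\sigma}^p = (N\sigma^2 + 2b\,\E S_N)^{p/2}\le 2^{p/2}\big((N\sigma^2)^{p/2} + (2b\,\E S_N)^{p/2}\big)$. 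The only point requiring care is the cross term $p^{p/2}(b\,\E S_N)^{p/2} = (bp)^{p/2}(\E S_N)^{p/2}$, which I would dominate by $\tfrac12\big((bp)^p + (\E S_N)^p\big)$ via AM--GM; this reproduces only the already-present term types $p^p b^p$ and $(\E S_N)^p$. Collecting everything gives the claimed $\E S_N^p \le C_0^p\big((\E S_N)^p + p^{p/2}(N\sigma^2)^{p/2} + p^p b^p\big)$.

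The main technical point is thus not the tail bound, which is the cited Talagrand--Bousquet inequality (in its maximal form via the augmented index), but the clean bookkeeping in the moment step: one must check that expanding the self-referential proxy $\bar{\sigma}^2 = N\sigma^2 + 2b\,\E S_N$ does not generate a genuine fourth term, and this is exactly what the AM--GM splitting of the cross term guarantees.
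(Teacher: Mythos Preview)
Your proposal is correct and takes essentially the same approach as the paper: cite the Talagrand--Bousquet tail bound, then derive the moment bound by handling the cross term $b\,\E S_N$ hidden in $\bar{\sigma}^2$ via an AM--GM-type split. The only difference is execution order: the paper first absorbs the cross term into the centering (rewriting the tail as $\Prob\big((S_N-C\,\E S_N)_+\geq x\big)\leq C\exp\big(-x^2/C(N\sigma^2+bx)\big)$) and then integrates this clean Bernstein tail directly to get $\E(S_N-C\,\E S_N)_+^p\leq C^p\big(p^{p/2}(N\sigma^2)^{p/2}+p^pb^p\big)$, whereas you keep $\bar{\sigma}^2$ intact through the sub-gamma moment step and split the cross term afterward. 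One minor imprecision: your augmented-index reduction for the running maximum yields centering $\E\max_j S_j$, not $\E S_N$ as the first display claims; the paper sidesteps this by citing Theorem~3.3.9 of \cite{gine2015mathematical} directly, and as you correctly note, only the non-maximal version is needed for the moment bound anyway.
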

\begin{proof}
	The exponential inequality follows from Theorem 3.3.9 of \cite{gine2015mathematical}, and naturally translates to the following form: for some absolute constant $C>0$,
	\begin{align*}
	\Prob\bigg(\big(S_N-C\cdot \E S_N\big)_+ \geq x \bigg)\leq C\exp\left(-\frac{x^2}{C(N\sigma^2+ bx)}\right).
	\end{align*}
	Hence,
	\begin{align*}
	&\E \big(S_N-C\cdot \E S_N\big)_+^p \\
	&\leq C_1 p \left(\int_0^\infty x^{p-1} e^{-x^2/(C_1 N\sigma^2)}\ \d{x}+\int_0^\infty x^{p-1} e^{-x/(C_1 b)}\ \d{x}\right)\\
	&\leq C_2^p \left( \Gamma\big(p/2\big)(N\sigma^2)^{p/2}+ \Gamma(p) b^p \right)\leq C_3^p \left( p^{p/2} (N\sigma^2)^{p/2}+p^p b^p\right),
	\end{align*}
	which implies the desired moment inequality. Here $C_0,C_1,C_2,C_3>0$ are absolute constants. 
\end{proof}

We also need the following lemma that translates the moment inequality back to an exponential inequality.
\begin{lemma}
	\label{lem:moment_tail}
	If $Y$ is a non-negative random variable such that 
	\begin{align*}
	(\mathbb{E} Y^p)^{1/p} \leq A_1p + A_2p^{1/2}+A_3
	\end{align*}
	for all $p \in [1,\infty)$ and some $A_1,A_2 > 0$, $A_3 \geq 0$, then we have the following exponential bound: for every $t \geq 0$,
	\begin{align*}
	\mathbb{P}(Y\geq t + eA_3) \leq e\cdot  \exp\bigg(-\frac{t}{2eA_1}\wedge \frac{t^2}{4e^2A_2^2}\bigg).
	\end{align*}
\end{lemma}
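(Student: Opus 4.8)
The plan is to convert the polynomial-in-$p$ moment growth into a sub-exponential tail bound by applying Markov's inequality at an optimally chosen power $p$, together with a case split at $p=1$ that the prefactor $e$ in the statement is designed to absorb. First I would record the raw Markov step: for any $p \in [1,\infty)$ and any threshold $s>0$, the hypothesis gives
\[
\Prob(Y \geq s) \leq \frac{\E Y^p}{s^p} \leq \left(\frac{A_1 p + A_2 p^{1/2} + A_3}{s}\right)^p.
\]
Setting $s = e\,(A_1 p + A_2 p^{1/2} + A_3)$ collapses the right-hand side exactly to $e^{-p}$, so that $\Prob\big(Y \geq e(A_1 p + A_2 p^{1/2} + A_3)\big) \leq e^{-p}$ for every $p \geq 1$.

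Next, given $t \geq 0$, I would choose the exponent $p^\ast := \frac{t}{2eA_1} \wedge \frac{t^2}{4e^2A_2^2}$, which is precisely the quantity appearing in the claim. The purpose of this choice is that it makes each of the two contributions at most $t/2$: from $p^\ast \leq \frac{t}{2eA_1}$ one gets $eA_1 p^\ast \leq t/2$, and from $p^\ast \leq \frac{t^2}{4e^2A_2^2}$ one gets $(p^\ast)^{1/2} \leq \frac{t}{2eA_2}$, hence $eA_2 (p^\ast)^{1/2} \leq t/2$. Adding these yields $e\,(A_1 p^\ast + A_2 (p^\ast)^{1/2}) \leq t$, which is equivalent to $e\,(A_1 p^\ast + A_2 (p^\ast)^{1/2} + A_3) \leq t + eA_3$.

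In the regime $p^\ast \geq 1$ I would then combine the two facts above with the monotonicity of $s \mapsto \Prob(Y \geq s)$: since $t + eA_3 \geq e\,(A_1 p^\ast + A_2 (p^\ast)^{1/2} + A_3)$,
\[
\Prob(Y \geq t + eA_3) \leq \Prob\big(Y \geq e(A_1 p^\ast + A_2 (p^\ast)^{1/2} + A_3)\big) \leq e^{-p^\ast} \leq e\cdot e^{-p^\ast},
\]
which is exactly the asserted inequality.

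The only subtlety is that the moment hypothesis is assumed only for $p \geq 1$, so the choice $p=p^\ast$ above is admissible only when $p^\ast \geq 1$; this is the main (and essentially the sole) point requiring care. In the complementary regime $p^\ast < 1$, corresponding to small $t$, I would argue trivially: the prefactor forces $e\cdot e^{-p^\ast} > e\cdot e^{-1} = 1 \geq \Prob(Y \geq t + eA_3)$, so the bound holds automatically. Combining the two cases gives the claim for all $t \geq 0$. There is no genuine obstacle here beyond the bookkeeping of this case split at $p^\ast = 1$, which is precisely why the constant $e$ is carried in front of the exponential.
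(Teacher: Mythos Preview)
Your proof is correct and follows essentially the same approach as the paper's: both choose the same optimized exponent $p^\ast = \frac{t}{2eA_1}\wedge \frac{t^2}{4e^2A_2^2}$, apply Markov's inequality at power $p^\ast$ to obtain $e^{-p^\ast}$ when $p^\ast\geq 1$, and dispose of the case $p^\ast<1$ trivially via the prefactor $e$. The only cosmetic difference is that you first record the general bound $\Prob\big(Y\geq e(A_1 p+A_2 p^{1/2}+A_3)\big)\leq e^{-p}$ and then invoke monotonicity, whereas the paper applies Markov directly at the threshold $t+eA_3$ and checks the ratio is at most $e^{-1}$; these are the same computation.
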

\begin{proof}
	The proof is standard. We include some details for readers' convenience. Let $s\equiv \frac{t}{2eA_1}\wedge \frac{t^2}{(2eA_2)^2}$. For values of $t$ such that $s\geq 1$, we have by Markov's inequality that
	\[
	\mathbb{P}(Y\geq t+eA_3) \leq \biggl(\frac{A_1s+ A_2s^{1/2}+A_3}{t+eA_3}\biggr)^s\leq e^{-s}\leq e^{1-s}.
	\]
	For values of $t$ such that $s<1$, we trivially have $\mathbb{P}(Y\geq t + eA_3) \leq \mathbb{P}(Y\geq t)\leq e^{1-s}$, as desired.
\end{proof}

\begin{proof}[Proof of Proposition \ref{prop:Talagrand_HT}]
	Fix $p\geq 1$. By Proposition \ref{prop:multiplier_ineq} and Talagrand's concentration inequality (cf. Lemma \ref{lem:talagrand_conc_ineq}), we have
	\begin{align*}
	&\E \sup_{f \in \mathcal{F}} \biggabs{\sum_{i=1}^N \frac{\xi_i}{\pi_i} \big(f(Y_i)-Pf\big) }^p\leq (C/\pi_0)^p \cdot \E \sup_{f \in \mathcal{F}} \biggabs{\sum_{i=1}^N \big(f(Y_i)-Pf\big)}^p\\
	&\leq (C'/\pi_0)^p \bigg[\bigg(\E \sup_{f \in \mathcal{F}} \biggabs{\sum_{i=1}^N \big(f(Y_i)-Pf\big)}\bigg)^p + p^{p/2}(N\sigma)^{p/2}+p^p b^p\bigg],
	\end{align*}
	The claim now follows from Lemma \ref{lem:moment_tail}.
\end{proof}

Let $\phi$ be a continuous and strictly increasing function with $\phi(0)=0$. Let $\mathcal{F}(r)\equiv \{f \in \mathcal{F}: \sigma_P^2 f\leq r^2\}$ and $\mathcal{F}(r,s] \equiv \mathcal{F}(s)\setminus \mathcal{F}(r)$. Fix $0<r<\delta\leq 1$. For a real number $1<q\leq 2$, let $l\equiv l_{r,\delta,q}$ be the smallest integer no smaller than $\log_q(\delta/r)$. Let for any $\bm{s}\equiv (s_1,\ldots,s_l) \in \R_{\geq 0}^l$, 
\begin{align}\label{def:beta_tau}
\beta_{N,q}(r,\delta)&\equiv \max_{1\leq j\leq l} \frac{\E \pnorm{\G_N}{\mathcal{F}(rq^{j-1},rq^j)} }{\phi(rq^j)},\quad\tau_{N,q}(r,\delta,\bm{s})\equiv \max_{1\leq j\leq l} \frac{rq^j \sqrt{ s_j}+s_j/\sqrt{N}}{\phi(rq^j)}.
\end{align}

\begin{proposition}\label{prop:deviation_normalized_process}
	Suppose (A1) holds. Assume that $\phi$ is continuous, strictly increasing and satisfies $\sup_{r\leq x\leq 1} \phi(qx)/\phi(x)=\kappa_{r,q}<\infty$ for some $1<q\leq 2$. Then for any $\bm{s}\equiv (s_1,\ldots,s_l) \in \R_{\geq 0}^l$, 
	\begin{align*}
	&\Prob\left[\sup_{f \in \mathcal{F}: r^2<\sigma_P^2 f\leq \delta^2} \frac{\abs{\tilde{\G}^\pi_N(f)}}{\phi\left(\sigma_P f\right)}\geq C\kappa_{r,q}\bigg(\beta_{N,q}(r,\delta)+ \tau_{N,q}(r,\delta,\bm{s})\bigg)\right]\leq 	e\sum_{j=1}^l \exp\big(-s_j\big).
	\end{align*}
	Here $C>0$ is a constant depending only through $\pi_0>0$.
\end{proposition}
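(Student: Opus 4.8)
The plan is to use the standard \emph{peeling} (slicing) device, with the only probabilistic input being the Talagrand-type concentration inequality for the Horvitz--Thompson process in Proposition \ref{prop:Talagrand_HT}. First I would partition the variance band $\{f\in\mathcal{F}:r^2<\sigma_P^2 f\le\delta^2\}$ into the $l$ geometric slices $\mathcal{F}_j\equiv\mathcal{F}(rq^{j-1},rq^j]$, $1\le j\le l$, which are exactly the classes appearing in the definition of $\beta_{N,q}$. Because $l$ is the smallest integer with $l\ge\log_q(\delta/r)$, we have $rq^l\ge\delta$, so these slices cover the whole band; moreover $l-1<\log_q(\delta/r)$ forces $rq^{j-1}<\delta\le 1$ for every $j\le l$, which is precisely what lets the regularity hypothesis on $\phi$ be applied on each slice.

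On the slice $\mathcal{F}_j$, monotonicity of $\phi$ together with the growth bound $\sup_{r\le x\le 1}\phi(qx)/\phi(x)=\kappa_{r,q}$ give, for any $f$ with $\sigma_P f\in(rq^{j-1},rq^j]$,
\begin{align*}
\phi(\sigma_P f)\ge\phi(rq^{j-1})\ge\kappa_{r,q}^{-1}\phi\big(q\cdot rq^{j-1}\big)=\kappa_{r,q}^{-1}\phi(rq^j),
\end{align*}
so that $\sup_{f\in\mathcal{F}_j}\abs{\tilde{\G}_N^\pi(f)}/\phi(\sigma_P f)\le\kappa_{r,q}\,\pnorm{\tilde{\G}_N^\pi}{\mathcal{F}_j}/\phi(rq^j)$. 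This replaces the awkward random denominator $\phi(\sigma_P f)$ by the deterministic $\phi(rq^j)$ at the cost of the factor $\kappa_{r,q}$.

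Next I would bound the numerator on each slice by applying Proposition \ref{prop:Talagrand_HT} to the countable class $\mathcal{F}_j$ with $x=s_j$, using $\sup_{f\in\mathcal{F}_j}\mathrm{Var}_P f\le(rq^j)^2$ and noting that the uniform envelope of $\mathcal{F}$ is a constant absorbed into $C=C(\pi_0)$. Recalling that $\sqrt{N}\,\tilde{\G}_N^\pi(f)=\sum_{i=1}^N(\xi_i/\pi_i)(f(Y_i)-Pf)$ and $\E\pnorm{\G_N}{\mathcal{F}_j}=N^{-1/2}\E\pnorm{\sum_i(f(Y_i)-Pf)}{\mathcal{F}_j}$, after dividing by $\sqrt{N}$ and then by $\phi(rq^j)$ this yields, with probability at least $1-e\,e^{-s_j}$,
\begin{align*}
C^{-1}\frac{\pnorm{\tilde{\G}_N^\pi}{\mathcal{F}_j}}{\phi(rq^j)}\le\frac{\E\pnorm{\G_N}{\mathcal{F}_j}}{\phi(rq^j)}+\frac{rq^j\sqrt{s_j}+s_j/\sqrt{N}}{\phi(rq^j)}\le\beta_{N,q}(r,\delta)+\tau_{N,q}(r,\delta,\bm{s}),
\end{align*}
the last step being just the definitions of $\beta_{N,q}$ and $\tau_{N,q}$ as maxima over $j$.

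Finally I would combine the two displays and take a union bound over $j=1,\dots,l$: the exceptional set has probability at most $e\sum_{j=1}^l e^{-s_j}$, and on its complement every slice obeys $\sup_{f\in\mathcal{F}_j}\abs{\tilde{\G}_N^\pi(f)}/\phi(\sigma_P f)\le C\kappa_{r,q}(\beta_{N,q}+\tau_{N,q})$, whence the claim. I do not expect a serious obstacle: all the probabilistic content is carried by Proposition \ref{prop:Talagrand_HT}, whose decisive feature is that it controls the Horvitz--Thompson process purely through the expected supremum $\E\pnorm{\G_N}{\cdot}$ of the \emph{ordinary} empirical process and the ordinary variance and envelope. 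The only points requiring care are therefore the bookkeeping of the geometric slicing, the correct matching of the variance bound $(rq^j)^2$ to the term $rq^j\sqrt{s_j}$, and the verification that $rq^{j-1}\in[r,1]$ so that the $\kappa_{r,q}$ bound is legitimately applicable even on the topmost slice $j=l$.
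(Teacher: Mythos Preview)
Your proposal is correct and follows essentially the same route as the paper's proof: peel the variance band into the geometric slices $\mathcal{F}_j=\mathcal{F}(rq^{j-1},rq^j]$, apply Proposition~\ref{prop:Talagrand_HT} on each slice with $\sigma_j^2=(rq^j)^2$ and the envelope absorbed into the constant, take a union bound, and then pass from $\phi_q$ back to $\phi$ via the growth condition $\sup_{r\le x\le 1}\phi(qx)/\phi(x)=\kappa_{r,q}$. Your write-up is in fact more careful than the paper's about the bookkeeping (checking $rq^{j-1}\in[r,1]$ so the $\kappa_{r,q}$ bound is legitimate on every slice), but the argument is the same.
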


\begin{proof}[Proof of Proposition \ref{prop:deviation_normalized_process}]
	The proof is a simple application of the one-sided Talagrand's concentration inequality for the Horvitz-Thompson empirical process (cf. Proposition \ref{prop:Talagrand_HT}) combined with a peeling device, analogous to the developement in \cite{gine2006concentration}. Write $\mathcal{F}_j\equiv \mathcal{F}(rq^{j-1},rq^j]$ and $\phi_q(u)\equiv \phi(rq^j)$ if $u \in (rq^{j-1},rq^j]$ for notational convenience. By Proposition \ref{prop:Talagrand_HT},
	\begin{align*}
	\Prob\bigg[\pnorm{\tilde{\G}_N^\pi}{\mathcal{F}_j}\geq C\left(\E \pnorm{\G_N}{\mathcal{F}_j} + \sqrt{\sigma^2_j s_j}+\frac{s_j}{\sqrt{N}}\right)\bigg]\leq e\cdot \exp\big(-s_j\big)
	\end{align*}
	where $\sigma_j^2 = \sup_{f \in \mathcal{F}_j} \sigma_P^2f=r^2q^{2j}$. Hence by a union bound we see that with probability at least $1-e\sum_{j=1}^l  \exp(-s_j)$, it holds that
	\begin{align*}
	&\bigg(\sup_{f \in \mathcal{F}: r^2<\sigma_P^2 f\leq \delta^2} \frac{\abs{\tilde{\G}_N^\pi(f)}}{\phi_q\left(\sigma_P f\right)}-C\beta_{N,q}(r,\delta)\bigg)_+\\
	&\leq \max_{1\leq j\leq l}\left( \frac{ \pnorm{\G_N }{\mathcal{F}_j} }{\phi(rq^j)}- \frac{C\E \pnorm{\G_N}{\mathcal{F}(rq^{j-1},rq^j)} }{\phi(rq^j)}\right)_+\leq C\max_{1\leq j\leq l} \frac{rq^j \sqrt{ s_j}+s_j/\sqrt{N}}{\phi(rq^j)}.
	\end{align*}
	Now the conclusion follows from $\sup_{r\leq x\leq 1}\phi(qx)/\phi(x)=\kappa_{r,q}<\infty$.
\end{proof}

Let
\beqa\label{def:uniform_entropy}
J(\delta,\mathcal{F},L_2) \equiv   \int_0^\delta  \sup_Q\sqrt{1+\log \mathcal{N}(\epsilon\pnorm{F}{Q,2},\mathcal{F},L_2(Q))}\ \d{\epsilon}
\eeqa
denote the \emph{uniform} entropy integral, where the supremum is taken over all finitely discrete probability measures, and let
\begin{align}\label{def:bracketing_entropy}
J_{[\,]}(\delta,\mathcal{F},\pnorm{\cdot}{}) \equiv \int_0^\delta \sqrt{1+\log \mathcal{N}_{[\,]}(\epsilon,\mathcal{F},\pnorm{\cdot}{})}\ \d{\epsilon}
\end{align}
denote the \emph{bracketing} entropy integral. 
\begin{lemma}\label{lem:local_maximal_ineq}
	Suppose that $\mathcal{F} \subset L_\infty(1)$, and $X_1,\ldots,X_n$'s are i.i.d. random variables with law $P$. Then with $\mathcal{F}(\delta)\equiv \{f \in \mathcal{F}:Pf^2<\delta^2\}$,
	\begin{enumerate}
		\item If the uniform entropy integral (\ref{def:uniform_entropy}) converges, then
		\begin{align}\label{ineq:local_maximal_uniform}
		\E \biggpnorm{\sum_{i=1}^n \epsilon_i f(X_i)}{\mathcal{F}(\delta)}  \lesssim \sqrt{n}J(\delta,\mathcal{F},L_2)\bigg(1+\frac{J(\delta,\mathcal{F},L_2)}{\sqrt{n} \delta^2 \pnorm{F}{P,2}}\bigg)\pnorm{F}{P,2}.
		\end{align}
		\item If the bracketing entropy integral (\ref{def:bracketing_entropy}) converges, then
		\begin{align}\label{ineq:local_maximal_bracketing}
		\E \biggpnorm{\sum_{i=1}^n \epsilon_i f(X_i)}{\mathcal{F}(\delta)} \lesssim \sqrt{n} J_{[\,]}(\delta,\mathcal{F},L_2(P))\bigg(1+\frac{J_{[\,]}(\delta,\mathcal{F},L_2(P))}{\sqrt{n} \delta^2}\bigg).
		\end{align}
	\end{enumerate}
\end{lemma}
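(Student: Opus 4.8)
The plan is to handle the two items with the two standard chaining devices---the Koltchinskii--Pollard argument for the uniform entropy integral (\ref{def:uniform_entropy}) and the Ossiander bracketing argument for (\ref{def:bracketing_entropy})---in each case \emph{localizing} the chaining radius so that the leading term carries $J(\delta)$ (resp. $J_{[\,]}(\delta)$) rather than $J(1)$. Throughout I work conditionally on $X_1,\dots,X_n$ first, viewing $\E\pnorm{\sum_{i=1}^n\epsilon_i f(X_i)}{\mathcal{F}(\delta)}$ as a Rademacher average. Write $\Prob_n\equiv n^{-1}\sum_{i=1}^n \delta_{X_i}$ and $A\equiv \E\pnorm{\sum_{i=1}^n\epsilon_i f(X_i)}{\mathcal{F}(\delta)}$ for the quantity to be bounded.

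For item (1), the given-data Rademacher process is sub-Gaussian for the $L_2(\Prob_n)$ metric, so Dudley's entropy bound yields
\[
\E_\epsilon \pnorm{\textstyle\sum_{i=1}^n \epsilon_i f(X_i)}{\mathcal{F}(\delta)} \lesssim \sqrt{n}\int_0^{S_n}\sqrt{1+\log \mathcal{N}(\tau, \mathcal{F}, L_2(\Prob_n))}\,\d{\tau},
\]
where $S_n\equiv \sup_{f\in\mathcal{F}(\delta)}\pnorm{f}{L_2(\Prob_n)}$ is the random empirical radius. Replacing $\mathcal{N}(\cdot,\mathcal{F},L_2(\Prob_n))$ by $\sup_Q \mathcal{N}(\cdot,\mathcal{F},L_2(Q))$ over finitely discrete $Q$ and rescaling by $\pnorm{F}{L_2(\Prob_n)}$ turns the integral into $\pnorm{F}{L_2(\Prob_n)}\,J\big(S_n/\pnorm{F}{L_2(\Prob_n)},\mathcal{F},L_2\big)$, i.e.\ the uniform entropy integral evaluated at the relative empirical radius.

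The delicate point---and the step I expect to be the main obstacle---is that $S_n$ is random and must be controlled by the deterministic $\delta$. I would write $S_n^2 \le \delta^2 + \pnorm{(\Prob_n - P)f^2}{\mathcal{F}(\delta)}$ and bound the second term by symmetrization followed by the contraction principle (the map $t\mapsto t^2$ is $2$-Lipschitz on $[-1,1]$ since $\mathcal{F}\subset L_\infty(1)$), which returns exactly $C A/n$. Taking expectations in the previous display and using that $J(\cdot,\mathcal{F},L_2)$ is concave with $J(0)=0$ (its integrand is nonincreasing, whence $J(c\,\cdot)\le c\,J(\cdot)$ for $c\ge 1$), together with Jensen to pass $\E$ through $J$ and Cauchy--Schwarz on the envelope, produces a self-referential inequality of the shape $A \lesssim \sqrt{n}\,\pnorm{F}{L_2(P)}\,J\big(\sqrt{\delta^2 + CA/n}\big)$. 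Splitting $J$ of the sum by subadditivity and solving for $A$ yields the two terms $\sqrt{n}\,\pnorm{F}{L_2(P)} J(\delta)$ and $J(\delta)^2/\delta^2$, which is precisely (\ref{ineq:local_maximal_uniform}).

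For item (2) I would first desymmetrize, bounding $A$ by a constant multiple of $\sqrt{n}\,\E\pnorm{\G_n}{\mathcal{F}(\delta)}$ with $\G_n\equiv\sqrt{n}(\Prob_n-P)$ (the $\mathcal{O}(\sqrt{n}\,\delta)$ centering term being absorbed since $J_{[\,]}(\delta)\gtrsim\delta$), and then run Ossiander's bracketing chaining with $L_2(P)$-brackets of geometrically decreasing size down to $\delta$. The chaining sum over the links contributes $J_{[\,]}(\delta,\mathcal{F},L_2(P))$, while the finest-level increments are controlled by Bernstein's inequality: the variance term gives the leading $\sqrt{n}\,J_{[\,]}(\delta)$, and the sup-norm of the brackets (bounded via $\mathcal{F}\subset L_\infty(1)$), after optimizing the truncation level, gives the quadratic correction $J_{[\,]}(\delta)^2/\delta^2$, establishing (\ref{ineq:local_maximal_bracketing}). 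Both bounds are localized versions of the maximal inequalities in \cite{van1996weak,gine2006concentration,gine2015mathematical}, and the only genuinely nonroutine ingredient is the self-bounding of the random radius in the uniform-entropy case described above.
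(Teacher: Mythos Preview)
Your proposal is correct and accurately reconstructs the standard arguments behind these localized maximal inequalities. The paper itself does not prove this lemma at all: it simply cites \cite{van2011local} (see also Section~3 of \cite{gine2006concentration} or Theorem~3.5.4 of \cite{gine2015mathematical}) for (\ref{ineq:local_maximal_uniform}), and Lemma~3.4.2 of \cite{van1996weak} for (\ref{ineq:local_maximal_bracketing}).

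Your sketch is precisely the content of those references. For the uniform-entropy bound, the conditional sub-Gaussian chaining followed by the self-bounding of the random empirical radius via symmetrization and contraction (exploiting $\mathcal{F}\subset L_\infty(1)$), then solving the resulting implicit inequality through the concavity of $J(\cdot,\mathcal{F},L_2)$, is exactly the argument of van der Vaart \cite{van2011local} and Gin\'e--Koltchinskii \cite{gine2006concentration}. For the bracketing bound, the Ossiander chaining with Bernstein control at the finest level is exactly Lemma~3.4.2 of \cite{van1996weak}. So there is no substantive difference in approach---you have supplied the proof that the paper merely points to, and the ``nonroutine'' step you flag (the self-bounding of $S_n$) is indeed the one genuine idea in the uniform-entropy case.
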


\begin{proof}
	(\ref{ineq:local_maximal_uniform}) follows from \cite{van2011local}; see also Section 3 of \cite{gine2006concentration}, or Theorem 3.5.4 of \cite{gine2015mathematical}. (\ref{ineq:local_maximal_bracketing}) follows from Lemma 3.4.2 of \cite{van1996weak}.
\end{proof}

\begin{lemma}\label{lem:sum_cond_CLT}
	Let $\{U_N\}$ be a sequence of random variables defined on $(\mathcal{S}_N\times\mathcal{X},\sigma(\mathcal{S}_N)\times\mathcal{A},\Prob)$ such that $U_N\rightsquigarrow \mathcal{N}(0,\tau^2)$ under $\Prob_d(\cdot,\omega)$ for $\Prob_{(Y,Z)}$-a.s. $\omega \in \mathcal{X}$. Let $\{V_N\}$ be another sequence of random variables defined on $(\mathcal{X},\mathcal{A},\Prob_{(Y,Z)})$ such that $V_N\rightsquigarrow \mathcal{N}(0,\sigma^2)$ under $\Prob_{(Y,Z)}$. Then $U_N+V_N\rightsquigarrow \mathcal{N}(0,\tau^2+\sigma^2)$ under $\Prob$.
\end{lemma}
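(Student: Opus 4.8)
The plan is to prove pointwise convergence of characteristic functions and then invoke Lévy's continuity theorem. Fix $t\in\R$. Since $V_N$ is a function of $\omega\in\mathcal{X}$ alone, it behaves as a constant under the design measure $\Prob_d(\cdot,\omega)$, and by the construction of $\Prob$ in (\ref{def:prob_meas}) the characteristic function of $U_N+V_N$ under $\Prob$ factors as an iterated expectation:
\begin{align*}
\E_{\Prob}\, e^{\mathrm{i}t(U_N+V_N)} = \E_{(Y,Z)}\Big[ e^{\mathrm{i}tV_N}\, \E_{d|(Y,Z)}\, e^{\mathrm{i}t U_N}\Big].
\end{align*}
Write $g_N(\omega)\equiv \E_{d|(Y,Z)=\omega}\, e^{\mathrm{i}t U_N}$, which satisfies the uniform bound $\abs{g_N}\leq 1$. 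The hypothesis that $U_N\rightsquigarrow\mathcal{N}(0,\tau^2)$ under $\Prob_d(\cdot,\omega)$ for $\Prob_{(Y,Z)}$-a.s. $\omega$, combined with Lévy's continuity theorem applied conditionally, yields $g_N(\omega)\to e^{-t^2\tau^2/2}$ for $\Prob_{(Y,Z)}$-a.s. $\omega$.

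Next I would split off this fixed limit:
\begin{align*}
\E_{(Y,Z)}\big[ e^{\mathrm{i}tV_N} g_N\big]
= e^{-t^2\tau^2/2}\,\E_{(Y,Z)}\big[e^{\mathrm{i}tV_N}\big]
+ \E_{(Y,Z)}\Big[ e^{\mathrm{i}tV_N}\big(g_N - e^{-t^2\tau^2/2}\big)\Big].
\end{align*}
For the first term, since $V_N\rightsquigarrow\mathcal{N}(0,\sigma^2)$ under $\Prob_{(Y,Z)}$, Lévy's continuity theorem gives $\E_{(Y,Z)}[e^{\mathrm{i}tV_N}]\to e^{-t^2\sigma^2/2}$, so the first term converges to $e^{-t^2(\tau^2+\sigma^2)/2}$. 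For the second term, using $\abs{e^{\mathrm{i}tV_N}}=1$ and $\abs{g_N - e^{-t^2\tau^2/2}}\leq 2$, the integrand is dominated by the constant $2$; together with the a.s. convergence $g_N\to e^{-t^2\tau^2/2}$, the dominated convergence theorem forces this term to $0$.

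Collecting these displays shows $\E_{\Prob}\, e^{\mathrm{i}t(U_N+V_N)}\to e^{-t^2(\tau^2+\sigma^2)/2}$ for every $t\in\R$, which is the characteristic function of $\mathcal{N}(0,\tau^2+\sigma^2)$ and is continuous at $t=0$; Lévy's continuity theorem then delivers $U_N+V_N\rightsquigarrow\mathcal{N}(0,\tau^2+\sigma^2)$ under $\Prob$. The one point requiring care—and thus the crux of the argument—is that $V_N$ itself varies with $N$, so one cannot simply multiply the two limits. The splitting above is designed precisely to isolate the fixed limit $e^{-t^2\tau^2/2}$ of the conditional characteristic functions, after which the $N$-dependence of $V_N$ is absorbed into its own weak convergence and the remainder is annihilated by dominated convergence thanks to the uniform bound $\abs{g_N}\leq 1$.
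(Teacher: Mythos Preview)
Your proof is correct and follows essentially the same approach as the paper: both compute the characteristic function via iterated expectation, split off the deterministic limit $e^{-t^2\tau^2/2}$ of the conditional characteristic function, handle the remainder by dominated convergence, and conclude via L\'evy's continuity theorem. Your write-up is in fact more careful (the paper writes $e^{it\tau^2}$ where $e^{-t^2\tau^2/2}$ is intended), but the underlying argument is identical.
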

\begin{proof}[Proof of Lemma \ref{lem:sum_cond_CLT}]
	Consider the characteristic function: for any $t \in \R$, we have
	\begin{align*}
	&\abs{\E e^{it(U_N+V_N)}-e^{it(\tau^2+\sigma^2)}}\\
	&\leq \abs{\E e^{it(U_N+V_N)}-e^{it\tau^2} \E e^{itV_N}}+\abs{e^{it\tau^2} \E e^{itV_N}-e^{it\tau^2}e^{it\sigma^2}}\\
	& = \abs{ \E \big(\E[e^{it U_N}|(Y^{(N)},Z^{(N)})]-e^{it\tau^2}\big)\cdot e^{itV_N} \big)  }+ \abs{\E e^{it V_N}-e^{it\sigma^2}}.
	\end{align*}
	The first term in the above display vanishes as $N\to \infty$ by the conditional CLT assumption on $U_N$ and the dominated convergence theorem, while the second also vanishes by the CLT assumption on $V_N$.
\end{proof}

\section*{Acknowledgements}
The authors would like to thank Thomas Lumley for several helpful suggestions.

\bibliographystyle{amsalpha}
\bibliography{mybib}

\end{document}